\documentclass[11pt]{amsart}
\usepackage{graphicx} \usepackage[margin=3cm]{geometry}
\usepackage{amsmath,amsfonts,amssymb,amsthm}
\usepackage{thmtools}
\usepackage{hyperref}
\usepackage{ulem}
\usepackage{xcolor}
\usepackage{shuffle}
\usepackage{cleveref}
\usepackage{subcaption}
\usepackage{tikz-cd}
\usepackage{microtype}
\usepackage{mathrsfs}
\usepackage{placeins}
\usepackage[T1]{fontenc}

\usepackage[notocbasic]{nomencl}

\newcounter{nomcount}
\newcommand{\nomkey}{\ifnum\value{nomcount}<10 00\fi
  \ifnum\value{nomcount}<100 \ifnum\value{nomcount}>9 0\fi\fi
  \arabic{nomcount}}
\newcommand{\autonom}[2]{\refstepcounter{nomcount}\nomenclature[\nomkey]{#1}{#2}}

\makenomenclature

\usepackage[utf8]{inputenc}
\usepackage[smalltableaux]{ytableau}
\hypersetup{
    colorlinks,
    linkcolor={red!50!black},
    citecolor={blue!50!black},
    urlcolor={black!50!black}
}

\newtheorem{theorem}{Theorem}[section]
\newtheorem*{theorem*}{Theorem}
\newtheorem{proposition}[theorem]{Proposition}
\newtheorem{lemma}[theorem]{Lemma}

\newtheorem{conjecture}[theorem]{Conjecture}
\newtheorem{corollary}[theorem]{Corollary}

\theoremstyle{definition}
\newtheorem{example}[theorem]{Example}
\newtheorem{remark}[theorem]{Remark}
\newtheorem{definition}[theorem]{Definition}

\newcommand{\paths}{\mathbf{PL}}

\newcommand{\vol}{\operatorname{vol}}
\newcommand{\proj}{\textnormal{proj}}
\newcommand{\spann}{\operatorname{span}}

\newcommand{\ann}{\operatorname{Ann}}

\newcommand{\emptyword}{\varnothing}
\newcommand{\Sym}{\operatorname{Sym}}
\newcommand{\AntiSym}{\operatorname{Alt}}
\newcommand{\comp}{\operatorname{Co}}
\newcommand{\tensor}{\mathbin{\otimes}}
\newcommand{\GL}{\operatorname{GL}}
\newcommand{\Pws}{\mathrm{PwS}}
\newcommand{\Pwa}{\mathrm{PwA}}
\newcommand{\KK}{\mathbb K}

\newcommand{\sigintegrand}[2]{\langle\omega(#1),#2\rangle}

\newcommand{\insertletter}{\operatorname{insert}}

\DeclareMathOperator{\im}{im}

\title{Piecewise Symmetric Tensors}

\author[Felix Lotter]{Felix Lotter$^1$}\thanks{$^1$Max Planck Institute for Mathematics in the Sciences, Leipzig, Germany}
\author[Rosa Preiß]{Rosa Preiß$^2$}\thanks{$^2$Technical University of Berlin, Germany}

\date{}

\begin{document}

\begin{abstract}
Every square matrix is uniquely the sum of a symmetric matrix and a skew-symmetric matrix.
We extend this familiar	fact to	higher-order tensors: every cubic $k$-tensor is uniquely the sum of an $m$-piecewise
symmetric tensor and a $(k\!-\!m)$-piecewise skew-symmetric tensor, for each choice of $m\leq k$. We study these tensor spaces via the combinatorics of word descents and compositions. Under Schur--Weyl duality, the two spaces naturally decompose into descent representations of the symmetric group. In fact, we prove a variant of Solomon's classical decomposition of its group algebra that explains this decomposition.

Our motivation
stems from signature tensors in stochastic analysis, algebraic geometry and data science: more specifically, we show that our tensor space decompositions determine the vanishing ideals of the signatures of piecewise linear paths with a fixed number of segments.

~\\
\textbf{Keywords}: symmetric tensors, alternating tensors, compositions, descents, ribbon Young diagrams, descent representations, path signatures, signed volumes, shuffles, vanishing ideals.

~\\
\textbf{MSC2020}: 05E10, 15A72, 20C30, 60L10
\end{abstract}

\maketitle

\section{Introduction}
Let $\KK \in \{\mathbb Q, \mathbb R, \mathbb C\}$\autonom{\(\KK\)}{The field of rational, real, or complex numbers}. Let us call a tensor $t \in (\KK^d)^{\tensor k}$ partially symmetric \textit{with $m$ connected blocks} if there is a composition $\alpha_1 + \ldots + \alpha_\ell = k$ of $k$ with $\alpha_1,\ldots,\alpha_\ell \in \mathbb N$ and $\ell \leq m$ such that $t \in \Sym^\alpha(\KK^d) := \Sym^{\alpha_1}(\KK^d) \tensor \dots \tensor \Sym^{\alpha_\ell}(\KK^d)$.

The study of partially symmetric tensors dates back to the 19th century (see \cite{bernardi2018hitchhiker} for an overview). Central questions concern their decompositions into simple tensors satisfying the same symmetries, which leads to the notion of partially symmetric rank \cite{ballico2019partially,wang2026multisubspacepowermethoddecomposing}.

In this article, we study the linear algebra and representation theory of tensors that arise by \textit{mixing} tensors of different partial symmetries, where we restrict to symmetries with connected blocks. Our main object of study here is the space of \textit{$m$-piecewise symmetric tensors}, which we define as follows: 
\[\Pws^{k}_{m}(\KK^d) := \sum_{
    \alpha \in \comp(k,\leq m)} \Sym^{\alpha}(\KK^d) \subseteq (\KK^d)^{\tensor k}.\]
    Here $\comp(k,\leq \!m)$ denotes the set of compositions of $k$ of length $\leq m$, i.e.\
    \begin{equation*}
        \comp(k,\leq \!m):=\{(\alpha_1,\dots,\alpha_\ell)\in\mathbb N^\ell\,|\,{\textstyle\sum_{i=1}^\ell\alpha_i=k}, \ \ell \leq m\}
    \end{equation*}
    where $\mathbb N=\{1,2,3,\dots\}$ denotes the positive integers. There are $\binom{k-1}{0} + \ldots + \binom{k-1}{m-1}$ such compositions. We write $\comp(k) := \comp(k,\leq \! k)$ for the set of all positive integer compositions of $k$.\autonom{\(\comp(k), \, \comp(k, \leq m)\)}{Positive integer compositions of $k$ (of length at most $m$)}

Our main result determines the annihilator of $\Pws^{k}_m(\KK^d)$. For a composition $\alpha = (\alpha_1,\ldots,\alpha_\ell)$ of $k$, let us write
$\AntiSym^\alpha(\KK^d) := \AntiSym^{\alpha_1}(\KK^d) \tensor \dots \tensor \AntiSym^{\alpha_\ell}(\KK^d)$ for the space of \textit{partially alternating} (or \textit{skew-symmetric}) tensors. Then we define the space of \textit{$m$-piecewise alternating} tensors as
\[\Pwa^{k}_{m}(\KK^d) := \sum_{
    \alpha \in \comp(k,\leq m)} \AntiSym^{\alpha}(\KK^d) \subseteq 
    (\KK^d)^{\tensor k}.\]

\begin{theorem}\label{thm:main}
        For any $d,k$ and $0 \leq m \leq k$, the subspaces $\Pws^{k}_m(\KK^d)$ and $\Pwa^{k}_{k-m}(\KK^d)$ of $(\KK^d)^{\tensor k}$ define an orthogonal decomposition of $(\KK^d)^{\tensor k}$ with respect to the standard inner product, in particular
    $$(\KK^d)^{\tensor k} = \Pws^{k}_m(\KK^d) \oplus \Pwa^{k}_{k-m}(\KK^d).$$
\end{theorem}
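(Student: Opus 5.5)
The plan is to prove orthogonality directly and then obtain the direct sum by a dimension count. I would do the count in the group algebra $\KK S_k$ and transfer it to $(\KK^d)^{\tensor k}$ by Schur--Weyl duality.

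\emph{Orthogonality.} Encode a composition $\alpha$ of $k$ by its set of cuts $S(\alpha)\subseteq\{1,\dots,k-1\}$, which has $|S(\alpha)|=\ell(\alpha)-1$. Take $\alpha\in\comp(k,\leq m)$ and $\beta\in\comp(k,\leq k-m)$. Then
\[|S(\alpha)\cup S(\beta)|\leq (m-1)+(k-m-1)=k-2<k-1,\]
so some position $i$ is a cut of neither composition. Hence $i$ and $i+1$ lie in a common block of $\alpha$ and also in a common block of $\beta$. Let $\tau=(i\;i+1)$ act on $(\KK^d)^{\tensor k}$ by permuting tensor factors. This action preserves the standard inner product, fixes every $s\in\Sym^\alpha(\KK^d)$, and sends every $a\in\AntiSym^\beta(\KK^d)$ to $-a$. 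Therefore $\langle s,a\rangle=\langle \tau s,\tau a\rangle=-\langle s,a\rangle=0$. Since the inner product is bilinear, this gives $\Pws^k_m\perp\Pwa^k_{k-m}$, and the sum is direct. The edge cases $m=0$ and $m=k$ are consistent with this: for $k\geq1$ we have $\Pws^k_0=0$, and $\Pwa^k_k\supseteq\AntiSym^{(1,\dots,1)}=(\KK^d)^{\tensor k}$.

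\emph{Reduction to $\KK S_k$.} For a composition $\alpha$, let $e_\alpha$ be the symmetrizer of the Young subgroup $S_\alpha$ and $f_\alpha$ its antisymmetrizer. Then $\Sym^\alpha(\KK^d)$ is the image of $V^{\tensor k}$ under $e_\alpha$, where $V=\KK^d$ and $S_k$ acts on the right. Since $\KK S_k$ is semisimple in characteristic $0$, the functor $M\mapsto V^{\tensor k}\tensor_{\KK S_k}M$ is exact and commutes with sums of submodules. It follows that $\Pws^k_m=V^{\tensor k}\tensor I_m$ and $\Pwa^k_{k-m}=V^{\tensor k}\tensor J_{k-m}$, where
\[I_m=\sum_{\alpha\in\comp(k,\leq m)}e_\alpha\KK S_k \quad\text{and}\quad J_{k-m}=\sum_{\beta\in\comp(k,\leq k-m)}f_\beta\KK S_k.\]
So it suffices to prove $\KK S_k=I_m\oplus J_{k-m}$. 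The transposition argument above, run with the form $\langle \sigma,\pi\rangle=\delta_{\sigma\pi}$ on $\KK S_k$, shows $I_m\perp J_{k-m}$. This gives $\dim I_m+\dim J_{k-m}\leq k!$, and it remains to prove the reverse inequality.

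\emph{Lower bounds (main step).} I would show that $\dim I_m\geq\#\{w\in S_k:\operatorname{des}(w)\leq m-1\}$ and $\dim J_{k-m}\geq\#\{w:\operatorname{asc}(w)\leq k-m-1\}$. The second set equals $\{w:\operatorname{des}(w)\geq m\}$, so the two counts sum to $k!$ and equality follows. For the first bound, take $w$ with descent set $D$ and $|D|\leq m-1$, and let $\alpha$ be the composition whose cut set is $D$. Then $w$ is increasing on each block of $\alpha$, so it is the distinguished minimal representative of its coset with respect to $S_\alpha$. Hence the coset sum, which spans $e_\alpha$ multiplied by $w$ on the appropriate side, has $w$ as its unique minimal term in a suitable order, for instance lexicographic order on one-line notation. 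Distinct $w$ produce distinct leading terms, so these coset sums are linearly independent. The antisymmetric case is identical, using ascent sets and the maximal coset representatives.

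The delicate point is fixing the left and right conventions consistently. One must check that the coset sums lie in $I_m$ with the same side convention used in the Schur--Weyl transfer, and that the leading-term order really separates elements whose minimal representatives coincide with different $\alpha$. If that becomes awkward, I would instead run the same triangularity argument directly on words in $(\KK^d)^{\tensor k}$, using symmetrized words indexed by words with few descents.
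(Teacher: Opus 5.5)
Your proposal is correct, and its core mechanism is the paper's. Orthogonality comes from a simple transposition lying in both Young subgroups; the paper phrases this via the join $\alpha\lor\beta$, which must have a block of size at least $2$. Then a lexicographic leading-term argument shows that the symmetrized elements indexed by objects with at most $m-1$ descents, and the antisymmetrized ones indexed by objects with at least $m$ descents, are each linearly independent, so the two lower bounds add up to the full dimension.

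The difference is the setting. The paper runs this directly on all words in $(\KK^d)^{\tensor k}$, packaged as the automorphism $\rho_m$ of \Cref{lmm:auto}. Repeated letters are allowed there, which is why it uses weakly increasing blocks for $\delta(w)$ but strictly decreasing ones for $\xi(w)$. You run it only on permutations in $\KK S_k$, which is the paper's argument restricted to the multilinear weight space ($we_\alpha$ corresponds to $e_w.p_\alpha$), and then transport to every $d$.

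What you gain is an elementary, self-contained proof of the $d$-independent identity $\KK S_k=\sum_{\alpha\in\comp(k,\leq m)}\KK S_k e_\alpha\oplus\sum_{\beta\in\comp(k,\leq k-m)}\KK S_k f_\beta$. This is the coarse form of \Cref{thm:grp-alg-decomp}, which the paper only obtains later from Solomon's lemmas. What the paper's route gains is explicit word-indexed bases of $\Pws^k_m(\KK^d)$ and $\Pwa^k_{k-m}(\KK^d)$, compatible with the inclusions in $m$. These give \Cref{cor:dim-of-pws} and \Cref{lmm:sym-bases} for every $d$, with no representation theory, and the argument adapts to arbitrary fields. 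For $d<k$ your argument yields the decomposition but not these bases or dimensions.

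The points you left open close as follows.

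\emph{Side convention.} Since $\Sym^\alpha(\KK^d)=(\KK^d)^{\tensor k}e_\alpha$ for a right action, take the left ideals $\KK S_k e_\alpha$ and the elements $we_\alpha$. The terms $w\sigma$, $\sigma\in S_\alpha$, rearrange the one-line entries of $w$ within the blocks of $\alpha$, so $w$ is the lex-smallest term. With $e_\alpha w$ the claim fails: for $w=231$ and $\alpha=(2,1)$, the term $s_1w$, with $(s_1w)(j)=s_1(w(j))$, equals $132$, which is lex smaller than $w$. On that side you would have to index by $w^{-1}$ instead.

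\emph{Transfer.} The exactness argument is more than you need. Write $1=e+f$ with $e\in I_m$ and $f\in J_{k-m}$. Then every tensor satisfies $x=xe+xf\in\Pws^k_m(\KK^d)+\Pwa^k_{k-m}(\KK^d)$, and your orthogonality step already gives directness.

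\emph{Case $\KK=\mathbb C$.} The bilinear form $\langle\sigma,\pi\rangle=\delta_{\sigma\pi}$ is not definite, so orthogonality alone does not give $I_m\cap J_{k-m}=0$. Use its Hermitian version, which is still invariant under right multiplication, or note that all the spaces are defined over $\mathbb Q$.
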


In \Cref{lmm:auto} we will give explicit bases for the two subspaces that are compatible with the inclusions $\Pws^{k}_m(\KK^d) \subseteq \Pws^{k}_{m'}(\KK^d)$ and $\Pwa^{k}_m(\KK^d) \subseteq \Pwa^{k}_{m'}(\KK^d)$ for $m \leq m'$.

\begin{example}
    If $m=1$, then $\Pwa^k_{k-1}(\KK^d)$ is spanned by tensors $e_{vijw} - e_{vjiw}$ for words $v$ and $w$ and letters $i$ and $j$, where as usual $e_{i_1\ldots i_k}$ denotes the tensor $e_{i_1}\tensor \dots \tensor e_{i_k}$. Since the simple transpositions generate the symmetric group, we immediately verify that $\Pwa^k_{k-1}(\KK^d)$ is indeed the orthogonal complement of $\Pws^k_1(\KK^d) = \Sym^k(\KK^d)$. In particular, our decomposition of tensors generalizes the decomposition of matrices into symmetric and antisymmetric part.
    
    Now let us consider the decomposition $(\KK^3)^{\tensor 4} = \Pws^4_2(\KK^3) \oplus \Pwa^4_2(\KK^3)$. $\Pws^4_2(\KK^3)$ has dimension $66$ and consequently $\Pwa^4_2(\KK^3)$ has dimension $15$. Let us decompose the tensor $a=16\,e_{1223}\in (\KK^3)^{\tensor 4}$ into $b \in \Pws^4_2(\KK^3)$ and $c \in \Pwa^4_2(\KK^3)$. We obtain
    \begin{align*}
        b &= 5 \sum_{\tau \in S_4} e_{\tau(1223)} - \sum_{\tau \in S_1 \times S_3} \left(6 e_{\tau(2123)} + 2 e_{\tau(3122)}\right)\\
        &\hphantom{=} -  \sum_{\tau \in S_2 \times S_2} \left(e_{\tau(1322)} + e_{\tau(2213)}\right) - \sum_{\tau \in S_3 \times S_1} \left(6 e_{\tau(1232)} + 2 e_{\tau(2231)}\right),\\
        c &= \sum_{\tau \in S_2 \times S_2} \mathrm{sgn}(\tau) \left(6e_{\tau(2132)} - 2e_{\tau(3221)}\right) - 2 \sum_{\tau \in S_1 \times S_3} \mathrm{sgn}(\tau) e_{\tau(2321)} \\
        &\hphantom{=}- 2 \sum_{\tau \in S_3 \times S_1} \mathrm{sgn}(\tau) e_{\tau(3212)}
    \end{align*}
    where $\tau(i_1,\ldots,i_4) := (i_{\tau(1)},\ldots,i_{\tau(4)})$.
    The symmetrizing sums that appear here already hint at the bases that we will construct in \Cref{lmm:auto}. 
\end{example}

\begin{remark}
    Partially symmetric tensors satisfy a stability with respect to the tensor product of vector spaces of the form
\begin{equation*}
    \Sym^{\alpha_1,\dots,\alpha_\ell}(\KK^d)\otimes\Sym^{\beta_1,\dots,\beta_{\ell'}}(\KK^d)=\Sym^{\alpha_1,\dots,\alpha_\ell,\beta_1,\dots,\beta_{\ell'}}(\KK^d)
\end{equation*}
while our construction satisfies the following convolution/binomial stability with respect to the tensor product of vector spaces:
\begin{equation*}
    \Pws_m^k(\KK^d)=\sum_{k'=1}^k\Pws_{m_1}^{k'}(\KK^d)\otimes\Pws_{m_2}^{k-k'}(\KK^d), 
\end{equation*}
for any $m_1+m_2=m$. In fact, this gives an alternative definition of partially symmetric tensors, if we add the initial condition $\Pws^k_1(\KK^d)=\Sym^k(\KK^d)$.
\end{remark}

The spaces $\Sym^{\alpha}(\KK^d)$ in the definition of $\Pws^k_m(\KK^d)$ can overlap non-trivially, as can the spaces $\AntiSym^{\alpha}(\KK^d)$ in the definition of $\Pwa^k_m(\KK^d)$. For example, $\Sym^\alpha(\KK^d) \subseteq \Sym^\beta(\KK^d)$ whenever the composition $\alpha$ is refined by the composition $\beta$, i.e., $\beta$ can be obtained by summing adjacent components of $\alpha$. In particular, the sum in the definition does not change if we restrict to compositions of length exactly $m$. In \Cref{sec:refine-dec} we will associate $\mathrm{GL}(\KK^d)$-invariant subspaces $V_\alpha(\KK^d), W_\alpha(\KK^d) \subseteq (\KK^d)^{\tensor k}$ to every composition $\alpha$ of $k$ such that
\[\Sym^\alpha(\KK^d) = \bigoplus_{\beta \leq \alpha} V_\beta(\KK^d) \quad \text{ and } \AntiSym^\alpha(\KK^d) = \bigoplus_{\beta\leq \alpha} W_\beta(\KK^d)\]
where $\beta \leq \alpha$ if the composition $\beta$ is refined by the composition $\alpha$. Then \Cref{thm:main} gives a finer decomposition of tensor space
\begin{equation}\label{eq:finer-dec}
    (\KK^d)^{\tensor k} = \bigoplus_{\alpha \in \comp(k,\leq m)} V_\alpha(\KK^d) \oplus  \bigoplus_{\alpha \in \comp(k,\leq k-m)} W_\alpha(\KK^d).
\end{equation}

We show that the spaces $W_\alpha(\KK^d)$ correspond under Schur--Weyl duality to the left ideals in Solomon's classical decomposition of the group algebra of the symmetric group \cite{SOLOMON1968220}. In \Cref{thm:grp-alg-decomp}, we prove a variant of Solomon's result in which the roles of symmetrizer and antisymmetrizer are partly reversed, which explains the decomposition \eqref{eq:finer-dec}. The representations given by the left ideals in this decomposition are known as \textit{descent representations} of the symmetric group \cite{adin2005descent, moustakas2023descent}. Their Frobenius characteristics are ribbon Schur functions \cite{gessel1984multipartite, billera2006decomposable}.

This viewpoint allows us to understand the spaces $V_\alpha(\KK^d)$ and $W_\alpha(\KK^d)$ as the images of Young symmetrizers associated to certain \textit{ribbon Young tableaux}. Using a result of Gessel \cite{gessel1984multipartite}, we show how they decompose into irreducibles (\Cref{cor:irred-decomp}). We compare this to the Thrall decomposition \cite{AMENDOLA2025} of tensor space.

The original motivation for our investigation of piecewise symmetric tensors (and the terminology) stems from the study of path signatures. The \textit{iterated-integral signature} of a path is an infinite collection of tensors, defined by iterated integration. It is a central object in the theory of rough paths in stochastic analysis.

Given a piecewise linear path $X$ with $m$ segments $v_1,\dots,v_m\in\mathbb R^d$, its signature can be shown to equal
\begin{equation*}
    \sigma(X)=\exp_{\otimes}(v_1)\otimes \cdots\otimes \exp_{\otimes}(v_m),
\end{equation*}
where exponentiation and products are taken in the completed tensor algebra $T((\mathbb R^d)):=\prod_{k=0}^\infty (\mathbb R^d)^{\otimes k}$.\autonom{\(\sigma(X)\)}{The signature of the path $X$} The projection of $\sigma(X)$ to $(\mathbb R^d)^{\tensor k}$ is the \textit{level $k$ signature tensor} $\sigma^{(k)}(X)$.\autonom{\(\sigma^{(k)}(X)\)}{The level $k$ signature of the path $X$} In \Cref{prop:span_of_signatures} we show that the linear hull of level $k$ signature tensors for paths of $m$ segments through $\mathbb R^d$ is exactly $\Pws_m^{k}(\mathbb R^d)$. \Cref{thm:main} then states that the linear relations among general signature tensors of such paths are precisely the elements of $\Pwa_{k-m}^{k}(\mathbb R^d)$. We give geometric intuition for  this result in \Cref{thm:conc_vol} and its discussion.

More generally, one can ask for all polynomial relations that hold among the entries of tensors
\begin{equation*}
    \sigma^{(k)}(X)\in (\mathbb R^d)^{\otimes k}
\end{equation*}
for any path $X$ with $m$ segments, where $d,k,m$ are fixed. The set of all such tensors is semi-algebraic. The polynomial relations define an algebraic variety in $\Pws^{k}_m(\mathbb R^d)$, the Zariski closure of this semi-algebraic set. While first results on these varieties were obtained in \cite{AFS18}, an exact description, let alone a description of the underlying semi-algebraic sets, seems out of reach so far.

However, knowledge of the linear relations also informs our understanding of the non-linear relations: due to the shuffle identity \cite{Ree58}, all degree $n$ homogeneous polynomial relations on signature $k$-tensors of $m$-segment piecewise linear paths appear as linear relations in the annihilator $\Pwa_{kn-m}^{kn}(\mathbb R^d)$ of $\Pws_m^{kn}(\mathbb R^d)$. See \Cref{sec:signatures} for more details. Forming the direct sum of all $\Pwa^k_{k-m}(\mathbb R^d)$ for fixed $d,m$, we obtain the vanishing ideal of the signatures $\sigma(X)$ of piecewise linear paths $X$ with $m$ segments. The precise structure of the ideal is studied in the final \Cref{sec:ideals}.

Our work lays the foundation for future investigation of piecewise symmetric tensors. One possible direction here would be the study of the \textit{$m$-piecewise symmetric rank} of a tensor $x \in \Pws^k_m(\KK^d)$; that is, the smallest number $r\in\mathbb N$ such that there are decomposable tensors $x_1,\ldots,x_r \in \Pws^k_m(\KK^d)$ with $x_1 + \ldots + x_r = x$. Note that for $m = k$, this is the tensor rank; for $m = 1$ it is the symmetric rank. However, it might also be reasonable to require that the decomposable tensors $x_1,\ldots,x_r$ are themselves partially symmetric with $m$ connected blocks. This leads to another notion of rank which is clearly bounded from below by the former.

The definition of $\Pws^k_m(\KK^d)$ also inspires the definition of a natural subvariety: indeed, the simple tensors in each of the spaces $\Sym^{\alpha_1}(\KK^d) \tensor \dots \tensor \Sym^{\alpha_m}(\KK^d)$ parametrize a subvariety (known as a Segre-Veronese variety \cite{catalisano2005higher}), and one may take the join of all these subvarieties for $\alpha \in \comp(k,\leq \!m)$. It would be interesting to understand this join. It contains the level $k$ signature variety of piecewise linear paths with $m$ segments that was examined in \cite{AFS18}.

\subsubsection*{Acknowledgments}
We thank Fulvio Gesmundo, Tim Seynnaeve and Daniele Taufer for helpful discussions, and Bernd Sturmfels for his advice and helpful suggestions regarding the presentation.
R.P.\ also thanks Leonard Schmitz and Clemens Hofstadler for valuable insights stemming from joint discussions. 
The authors acknowledge funding by the Deutsche Forschungsgemeinschaft (DFG, German Research Foundation)
– CRC/TRR 388 “Rough Analysis, Stochastic Dynamics and Related Fields” – Project A04, 516748464.

 The simplification of the expression in \Cref{cor:dim-of-pws} was suggested by GPT-5.5. After uploading the first version of the article to the arXiv, we prompted GPT-5.6 Luna, GPT-5.6 Sol and GLM-5.3 for a review of the article, and incorporated some of the hints. All writing is our own.  

\section{Piecewise symmetric and piecewise alternating tensors}

In this section, we prove \Cref{thm:main}. We do this in two steps: first, we show that $\Pws^{k}_m(\KK^d)$ and $\Pwa_{k-m}^{k}(\KK^d)$ are orthogonal. Then we will construct explicit bases for the two spaces, which will imply the statement by a simple dimension count.

For $\alpha \in \mathbb N^\ell$, let $\alpha_+$ denote the sum $\alpha_1 + \ldots + \alpha_\ell$, and let $|\alpha|$ denote the number of components $\ell$.\autonom{\(\alpha_+\)}{The total sum of an integer vector}\autonom{\(\lvert\alpha\rvert\)}{The length of an integer vector} For $k \in \mathbb N$ we define an order on $\comp(k)$ by letting $\alpha \leq \beta$ for $\alpha,\beta \in \comp(k)$ if $\alpha$ is refined by $\beta$. With this order, $\comp(k)$ is a boolean lattice. This can be seen directly: to a subset $\{k_1 < \ldots < k_\ell\}$ of $[k-1]$ we can associate the composition $k_1 + (k_2 - k_1) + \ldots + (k - k_\ell)$ of $k$ with length $\ell + 1$, and conversely a composition $\alpha$ of length $\ell$ defines a subset
\begin{equation}\label{def:J-bij}
    J(\alpha) := \{\alpha_1, \alpha_1+\alpha_2,\ldots, \alpha_1+\ldots + \alpha_{\ell-1}\}
\end{equation}
of $[k-1]$ of cardinality $\ell - 1$.\autonom{\(J(\alpha)\)}{The subset of $[k-1]$ corresponding to a composition $\alpha \in \comp(k)$} This defines an isomorphism of lattices between $\comp(k)$ and $2^{[k-1]}$.

\begin{remark}\label{rmk:sym_antisym_subset}
We always have inclusions $\Sym^{\alpha_+}(\KK^d) \subseteq \Sym^{\alpha}(\KK^d)$ and $\AntiSym^{\alpha_+}(\KK^d) \subseteq \AntiSym^{\alpha}(\KK^d)$. More generally, we have $\Sym^\alpha(\KK^d) \subseteq \Sym^\beta(\KK^d)$ and $\AntiSym^\alpha(\KK^d) \subseteq \AntiSym^\beta(\KK^d)$ whenever $\alpha \leq \beta$.
\end{remark}

\begin{lemma}\label{lmm:orth}
    For every $d,k$ and $m\leq k$, the spaces $\Pws_m^{k}(\KK^d)$ and $\Pwa_{k-m}^{k}(\KK^d)$ are orthogonal with respect to the standard inner product.
\end{lemma}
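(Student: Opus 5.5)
By bilinearity it suffices to show that $\Sym^\alpha(\KK^d)\perp\AntiSym^\beta(\KK^d)$ for every $\alpha\in\comp(k,\leq m)$ and $\beta\in\comp(k,\leq k-m)$. I will reduce this to the existence of a single adjacent transposition that $\Sym^\alpha$ is invariant under and $\AntiSym^\beta$ is anti-invariant under.

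The key combinatorial step uses the bijection $J$ from \eqref{def:J-bij}. We have $|J(\alpha)|=|\alpha|-1\leq m-1$ and $|J(\beta)|\leq k-m-1$. Hence
\[
|J(\alpha)\cup J(\beta)|\leq k-2<k-1=|[k-1]|,
\]
so there is some $i\in[k-1]$ lying in neither $J(\alpha)$ nor $J(\beta)$. Since $i\notin J(\alpha)$, positions $i$ and $i+1$ lie in the same block of $\alpha$. Since $i\notin J(\beta)$, they also lie in the same block of $\beta$. If $m=0$ or $m=k$, one of the two spaces is zero, because there are no compositions of $k\geq 1$ of length $0$; so these cases are trivial and the count above applies otherwise.

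Now let $s=(i\ i{+}1)$ act on $(\KK^d)^{\tensor k}$ by permuting tensor factors. Since $i,i+1$ share a block of $\alpha$, every $x\in\Sym^\alpha(\KK^d)$ satisfies $s\cdot x=x$. Since they share a block of $\beta$, every $y\in\AntiSym^\beta(\KK^d)$ satisfies $s\cdot y=-y$. The standard inner product is invariant under permutation of tensor factors, because $s$ permutes the orthonormal basis $e_{i_1\ldots i_k}$. Therefore
\[
\langle x,y\rangle=\langle s x,s y\rangle=-\langle x,y\rangle,
\]
so $\langle x,y\rangle=0$ since the characteristic is $0$.

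The only real content is the pigeonhole count, and I expect no genuine obstacle: one just has to track the off-by-one carefully, namely that lengths $\leq m$ and $\leq k-m$ give at most $k-2$ cut points in total.
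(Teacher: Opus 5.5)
Your proof is correct and follows the paper's argument. The paper uses the same count, $|J(\alpha)\cup J(\beta)|\leq k-2$, to conclude that the join $\gamma=\alpha\lor\beta$ has a block of size at least $2$, and then combines $\Sym^\alpha(\KK^d)\subseteq\Sym^\gamma(\KK^d)$ and $\AntiSym^\beta(\KK^d)\subseteq\AntiSym^\gamma(\KK^d)$ with $\Sym^\gamma(\KK^d)\perp\AntiSym^\gamma(\KK^d)$. Your adjacent transposition $(i\ i{+}1)$ inside such a block is exactly the unstated reason for that last orthogonality, so you have made the final step explicit and skipped the intermediate $\gamma$.
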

\begin{proof}
    Let $\alpha \in \comp(k,\leq\! m)$ and $\beta \in \comp(k,\leq\! k-m)$. Let $\gamma$ be the coarsest composition of $k$ refining both $\alpha$ and $\beta$, i.e. $\gamma := \alpha \lor \beta$. In particular, $J(\gamma) = J(\alpha) \cup J(\beta)$. Thus we have $|\gamma| \leq (m-1) + (k-m-1) + 1 = k - 1$, and so $\gamma$ has at least one block of size $\geq 2$. This implies that $\Sym^{\gamma}(\KK^d)$ and $\AntiSym^{\gamma}(\KK^d)$ are orthogonal. But due to Remark~\ref{rmk:sym_antisym_subset}, we have
    \[\Sym^{\alpha}(\KK^d) \subseteq \Sym^{\gamma}(\KK^d) \quad \text{and} \quad \AntiSym^{\beta}(\KK^d) \subseteq \AntiSym^{\gamma}(\KK^d),\]
    which concludes the proof.
\end{proof}

We will now define an automorphism
\[\rho_m: (\KK^d)^{\tensor k}\to (\KK^d)^{\tensor k}\] that maps a coordinate subspace to $\Pws^{k}_m(\KK^d)$ and its complement to $\Pwa^{k}_{k-m}(\KK^d)$. To this end, we introduce the following terminology:

Let $w:= (i_1,\ldots,i_k)$ be a length $k$ sequence of natural numbers $1,\ldots,d$, also called a \textit{word} over the alphabet $\{1,\ldots,d\}$. An index $1\leq j\leq k-1$ is a \textit{descent} of $w$ if $i_j > i_{j+1}$. Let $D(w) \subseteq [k-1]$ denote the set of descents of $w$.

For a length $k$ sequence $w$, let us denote the composition associated to $D(w)$ by $\delta(w)$\autonom{\(\delta(w)\)}{The descent composition of a word $w$} and the composition associated to $[k-1]\setminus D(w)$ by $\xi(w)$\autonom{\(\xi(w)\)}{The non-descent composition of a word $w$}. We call $\delta(w)$ the \textit{descent composition} and $\xi(w)$ the \textit{non-descent composition}. Note that $\delta(w)$ is the composition of $k$ given by the lengths of the maximal weakly increasing consecutive subsequences of $w$ and $\xi(w)$ is given by the lengths of maximal strictly decreasing consecutive subsequences. For example, the length $6$ sequence $(2,1,3,3,4,1)$ defines the descent composition $(1,4,1)$ and the non-descent composition $(2,1,1,2)$. Since $|D(w)| + |[k-1] \setminus D(w)| = k - 1$, we have $|\delta(w)| + |\xi(w)| = k + 1$.

 For a composition $\alpha = (\alpha_1,\ldots,\alpha_\ell)$ of $k$, set $S_\alpha := S_{\alpha_1} \times \dots \times S_{\alpha_\ell}$\autonom{\(S_\alpha\)}{The product of symmetric groups corresponding to $\alpha \in \comp(k)$} and let us define
 \begin{equation}\label{eq:symmetrizers}
     p_{\alpha} := \frac{1}{\alpha_1!\dots \alpha_\ell!}\sum_{\tau \in S_{\alpha}} \tau \quad \text{and} \quad q_{\alpha} := \frac{1}{\alpha_1!\dots \alpha_\ell!}\sum_{\tau \in S_{\alpha}} \mathrm{sgn}(\tau) \tau
 \end{equation}\autonom{\(p_\alpha\)}{The symmetrizer of $S_\alpha$, see \eqref{eq:symmetrizers}}\autonom{\(q_\alpha\)}{The antisymmetrizer of $S_\alpha$, see \eqref{eq:symmetrizers}}
in the group algebra $\KK[S_k]$. Then for $x \in (\KK^d)^{\tensor k}$, $x \mapsto x.p_\alpha$ and $x \mapsto x.q_\alpha$ are the projectors onto $\Sym^\alpha(\KK^d)$ and $\AntiSym^\alpha(\KK^d)$. Here, $S_k$ acts on $(\KK^d)^{\tensor k}$ from the right via $e_w \mapsto e_{\tau^{-1}(w)}$.

Now we define the map $\rho_m$ simply as follows\autonom{\(\rho_m\)}{The automorphism realizing the decomposition in \Cref{thm:main}}:
\begin{align*}
    \rho_m: (\KK^d)^{\tensor k}&\to (\KK^d)^{\tensor k} \\
    e_w &\mapsto \begin{cases}
        e_w.p_{\delta(w)} & \text{ if $|\delta(w)| \leq m$} \\
        e_{w}.q_{\xi(w)} & \text{ if $|\delta(w)| \geq m + 1$}
    \end{cases}
\end{align*}

Since $|\delta(w)| + |\xi(w)| = k + 1$, $e_w$ maps into $\Pws^{k}_m(\KK^d)$ if $|\delta(w)| \leq m$ and into $\Pwa^{k}_{k-m}(\KK^d)$ if $|\delta(w)| \geq m + 1$. Thus, in order to obtain \Cref{thm:main} it remains to prove the following lemma:

\begin{lemma}\label{lmm:auto}
    The map $\rho_m$ is an automorphism.
\end{lemma}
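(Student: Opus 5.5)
Since $\rho_m$ is a linear endomorphism of a finite-dimensional space, it suffices to show that it is injective. I will do this with a triangularity argument with respect to a suitable order on words. The map respects the content (the multiset of letters) of a word, since $e_w.p_\alpha$ and $e_w.q_\alpha$ are combinations of $e_{w'}$ with $w'$ a rearrangement of $w$. It therefore suffices to fix a content and work inside the span of the $e_w$ with $w$ of that content. On these words I use the lexicographic order, or more precisely an order that decides which term is ``leading''.

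\emph{Case $|\delta(w)|\le m$.} The tensor $e_w.p_{\delta(w)}$ symmetrizes within each maximal weakly increasing run of $w$. Every word appearing in it is obtained by permuting the letters within the runs, so it is lexicographically $\geq w$ in the reversed sense. More precisely, $w$ is the unique lexicographically smallest word in the support, since sorting each block increasingly is lexicographically minimal, and $w$ appears with a positive coefficient.

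\emph{Case $|\delta(w)|\ge m+1$.} The tensor $e_w.q_{\xi(w)}$ antisymmetrizes within each maximal strictly decreasing run. The letters in such a run are distinct, so $e_w$ appears with coefficient $\frac{1}{\alpha!}\neq 0$ (no cancellation). Every other word in the support permutes letters within the runs, and $w$, having decreasing blocks, is the lexicographically \emph{largest}.

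The leading terms point in opposite directions in the two cases, so a single order will not obviously work. This is the main obstacle. My plan is to avoid comparing across the cases directly and instead use orthogonality. By \Cref{lmm:orth}, the images of the two families lie in the orthogonal subspaces $\Pws^k_m$ and $\Pwa^k_{k-m}$, so a linear relation among all images splits into one relation within each family. It then suffices to show that each family is linearly independent on its own:
\begin{itemize}
\item for the first family, use the lexicographically smallest word as the leading term;
\item for the second family, use the lexicographically largest word.
\end{itemize}
In each family, the leading words of distinct $w$ are distinct (they are $w$ itself), so a standard triangular argument gives independence. Concretely, take a nontrivial relation and consider the extremal $w$ with nonzero coefficient. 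Its leading term $e_w$ occurs in no other image, because any other $w'$ in the family whose image contains $e_w$ would have to be strictly beyond $w$ in the chosen order, contradicting extremality.

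The remaining point to check is that the claimed extremal property holds for words across different blocks structures. If $e_w$ appears in $e_{w'}.p_{\delta(w')}$, then $w$ is obtained from $w'$ by permuting within the weakly increasing runs of $w'$. Hence $w\ge w'$ lexicographically, with equality only if $w=w'$. This follows because at the first differing position, within the block of $w'$ containing it, $w'$ holds the minimum of the remaining letters of that block. The decreasing case is symmetric. Thus injectivity, and hence bijectivity, of $\rho_m$ follows.
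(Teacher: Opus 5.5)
Your proposal is correct and follows the paper's proof. You show that the family $\{\rho_m(e_w) : |\delta(w)|\le m\}$ is triangular (column echelon) with respect to the lexicographic order, and the family $\{\rho_m(e_w) : |\delta(w)|\ge m+1\}$ with respect to the reverse lexicographic order, and then use the orthogonality from \Cref{lmm:orth} to get injectivity of the whole endomorphism; the reduction to fixed content and the first-differing-position argument only spell out the paper's echelon-form observation in more detail.
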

\begin{proof}
    For $|\delta(w)| \leq m$ note that any sequence $\tau(w)$ for $\tau \in S_{\delta(w)}$ is lexicographically greater than or equal to $w$. Thus, in the basis given by words ordered lexicographically, the matrix corresponding to the restricted map
    \begin{equation*}
        \spann\big\{e_w\big|\,|\delta(w)| \leq m\}\to (\KK^d)^{\otimes k}
    \end{equation*}
    is in column echelon form, and therefore injective.
    Similarly, for $|\delta(w)| \geq m+1$ every $\tau(w)$ for $\tau \in S_{\xi(w)}$ is lexicographically smaller than or equal to $w$. Thus, in the basis given by words ordered reverse lexicographically, the matrix corresponding to the restricted map
    \begin{equation*}
        \spann\big\{e_w\big|\,|\delta(w)| \geq m+1\}\to (\KK^d)^{\otimes k}
    \end{equation*}
    is in column echelon form, and therefore injective.
    As we already know that $\Pws^{k}_m(\KK^d)$ and $\Pwa^{k}_{k-m}(\KK^d)$ are orthogonal (\Cref{lmm:orth}), this shows injectivity of the endomorphism $\rho_m$ of a finite-dimensional vector space and we conclude.
\end{proof}

\begin{proof}[Proof (of \Cref{thm:main})]
    Combine \Cref{lmm:orth} and \Cref{lmm:auto}.
\end{proof}

\begin{remark}
    For a general field $\KK$, the proof can be adapted to show that for any finite-dimensional $\KK$-vector space $V = \KK^d$ we have an exact sequence
    \[\begin{tikzcd}
        0 \rar & \Pws^k_{m}(V) \rar["\iota"] & V^{\tensor k} \rar["x\mapsto \mathrm{ev}_x"] &(\Pwa^k_{k-m}(V^*))^* \rar & 0
    \end{tikzcd}\]
   where $\iota$ is the canonical inclusion and $\mathrm{ev}_x(f):=f(x)$
    for $x\in V^{\otimes k}$ and
    \begin{equation*}
        f\in \Pwa^k_{k-m}(V^*)\subseteq (V^*)^{\otimes k}=(V^{\otimes k})^*.
    \end{equation*}
    Indeed, as the dual of the inclusion $\Pwa^k_{k-m}(V^*)\to (V^*)^{\tensor k}$, the map $\mathrm{ev}$ is surjective, and \Cref{lmm:orth} shows that $\mathrm{ev} \circ \iota = 0$. Arguing as in \Cref{lmm:auto}, one can construct bases of $\Pws^k_m(V)$ and $\Pwa^k_{k-m}(V^*)$ indexed by words. More precisely, setting $S(w) := \{e_{\tau^{-1}(w)} \ | \ \tau \in S_{\delta(w)}\}$  and $T(w) :=\{\mathrm{sgn}(\tau) e^*_{\tau^{-1}(w)} \ | \ \tau \in S_{\xi(w)}\}$, we see that $b_w := \sum_{v\in S(w)} v$ for $|\delta(w)| \leq m$ form a basis of $\Pws^k_m(V)$ and $b'_w := \sum_{v \in T(w)} v$ for $|\delta(w)| \geq m + 1$ form a basis of $\Pwa^k_{k-m}(V^*)$. Thus a dimension count shows exactness of the sequence.
\end{remark}

\begin{corollary}\label{cor:dim-of-pws}
    The dimension of $\Pws^{k}_m(\KK^d)$ is
    \[\sum_{i=1}^{m} (-1)^{m-i} \binom{k}{m-i} \binom{id+k-1}{k},\]
    the number of length $k$-sequences of numbers $1,\ldots, d$ with at most $m-1$ descents. In particular, the dimension of $\Pwa^{k}_{k-m}(\KK^d)$ is the number of length $k$-sequences of numbers $1,\ldots, d$ with at least $m$ descents.
\end{corollary}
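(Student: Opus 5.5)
The proof splits into two parts: first identify the dimension of $\Pws^k_m(\KK^d)$ as a count of words, then evaluate that count in closed form.

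For the identification, use the proof of \Cref{lmm:auto}. The vectors $\rho_m(e_w)$ with $|\delta(w)|\le m$ are linearly independent and lie in $\Pws^k_m(\KK^d)$. The vectors $\rho_m(e_w)$ with $|\delta(w)|\ge m+1$ are linearly independent and lie in $\Pwa^k_{k-m}(\KK^d)$. Since $\rho_m$ is an automorphism and the two spaces form the direct sum of \Cref{thm:main}, each family must be a basis of its space. Because $|\delta(w)|=|D(w)|+1$, it follows that $\dim \Pws^k_m(\KK^d)$ equals the number of words of length $k$ over $[d]$ with at most $m-1$ descents. The complementary count gives the statement about $\Pwa^k_{k-m}(\KK^d)$, namely the words with at least $m$ descents.

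It remains to prove the closed formula. Let $N_i$ denote the number of words with exactly $i-1$ descents, so that $|\delta(w)|=i$. The first step is to count, for each $j$, the pairs $(w,\alpha)$ where $\alpha\in\comp(k)$ has length $j$ and $D(w)\subseteq J(\alpha)$. For fixed $\alpha$, such a word is a concatenation of $j$ weakly increasing blocks of lengths $\alpha_1,\ldots,\alpha_j$. Counting these blocks independently gives $\prod_r\binom{d+\alpha_r-1}{\alpha_r}$, and summing over $\alpha$ is awkward. The cleaner approach is the standard ``barred word'' / Stanley argument.

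In that argument, a weakly increasing word with $j$ chosen separator positions (allowing empty blocks) corresponds to a multiset of size $k$ drawn from $j\cdot d$ ordered symbols $(b,c)$ with $b\in[j]$ and $c\in[d]$, ordered lexicographically. The number of such multisets is $\binom{jd+k-1}{k}$. This counts pairs of a word $w$ together with a weakly increasing map $f\colon[k]\to[j]$ that strictly increases at each descent of $w$. A word with $|D(w)|=i-1$ admits $\binom{j-i+k}{k}$ such maps. This gives the Worpitzky-type identity
\[
\binom{jd+k-1}{k}=\sum_{i\ge1}\binom{k+j-i}{k}\,N_i .
\]

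The next step is to invert this identity. Summing it against $(-1)^{m-j}\binom{k}{m-j}$ for $j=1,\ldots,m$, we need
\[
\sum_{j=i}^{m}(-1)^{m-j}\binom{k}{m-j}\binom{k+j-i}{k}=1 \quad\text{for } i\le m .
\]
This holds by the Vandermonde-type identity $\sum_{s}(-1)^s\binom{k}{s}\binom{k+t-s}{k}=\binom{t}{0}=1$, with $t=m-i\ge0$, which is coefficient extraction from $(1-x)^k(1-x)^{-k-1}=(1-x)^{-1}$. The sum over $i$ is then $\sum_{i\le m}N_i$, as required. Terms with $i>m$ vanish automatically, since $\binom{k+j-i}{k}=0$ when $0\le k+j-i<k$ and $j\le m<i$.

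The main obstacle is getting the bijection in the barred-word count exactly right: the blocks must be allowed to be empty, and $f$ must strictly increase at descents. The resulting binomial identity is routine.
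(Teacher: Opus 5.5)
Your argument is correct. The first half, reading off the dimension from the basis $\rho_m(e_w)$ with $|\delta(w)|\le m$ and using $|\delta(w)|=|D(w)|+1$, is exactly what the paper does. For the closed formula you take a different route.

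The paper quotes Carlitz's formula for the number of words with exactly $i-1$ descents,
\[
\sum_{j=0}^{i-1}(-1)^j\binom{k+1}{j}\binom{(i-j)d+k-1}{k}.
\]
It sums this over $i\le m$, reindexes the double sum, and collapses the inner coefficient via $\sum_{j=0}^{n}(-1)^j\binom{k+1}{j}=(-1)^n\binom{k}{n}$.

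You instead prove the underlying Worpitzky-type identity $\binom{jd+k-1}{k}=\sum_i\binom{k+j-i}{k}N_i$ yourself via barred words. Your bijection to multisets in $[j]\times[d]$ is right, and so is the count $\binom{k+j-i}{k}$ of admissible $f$. You then invert directly for the cumulative count.

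In generating-function terms the identity reads $\sum_{j}\binom{jd+k-1}{k}x^j=(1-x)^{-k-1}\sum_i N_i x^i$. Carlitz's formula amounts to multiplying by $(1-x)^{k+1}$ to isolate each $N_i$, after which the paper still has to sum over $i$. You multiply by $(1-x)^{k}$ and read off $\sum_{i\le m}N_i$ as a single coefficient of $(1-x)^{-1}\sum_i N_ix^i$. That is why your inversion needs only the one coefficient-extraction identity.

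What each buys: your proof is self-contained and explains the terms $\binom{id+k-1}{k}$ as multiset counts, so the closed form is visibly an inclusion--exclusion. The paper's is shorter because it outsources the enumeration, and it passes through the exact per-descent counts along the way.

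Two small remarks. First, the vanishing of $\binom{k+j-i}{k}$ for $j<i$ also needs $k+j-i\ge 0$, which holds because $N_i=0$ for $i>k$. Second, your discarded first attempt, counting pairs $(w,\alpha)$ with nonempty blocks, is not needed and can be dropped.
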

\begin{proof}
    The condition $|\delta(w)| \leq m$ in the definition of $\rho_m$ is equivalent to $w$ having at most $m-1$ descents. Write $\mathrm{ND}(k,m,d)$ for the number of length $k$ sequences of numbers $1,\ldots, d$ with at most $m-1$ descents.

    By \cite[Equation 3.3]{Carlitz1966} the number of such sequences with exactly $i-1$ descents (counted as $i$ descents in the convention of loc.\ cit.) is
    \[\sum_{j=0}^{i-1} (-1)^j \binom{k+1}{j} \binom{(i-j)d+k-1}{k}.\]
    Thus we obtain
    \[\mathrm{ND}(k,m,d) = \sum_{i=1}^m \sum_{j=0}^{i-1} (-1)^j \binom{k+1}{j} \binom{(i-j)d+k-1}{k}.\]
    Switch the sums and substitute $i := i - j$ to get
    \begin{align*}
        \mathrm{ND}(k,m,d) &= \sum_{j=0}^{m-1} \sum_{i=j+1}^m  (-1)^j \binom{k+1}{j} \binom{(i-j)d+k-1}{k} \\
        &= \sum_{j=0}^{m-1} \sum_{i=1}^{m-j} (-1)^j \binom{k+1}{j} \binom{i d+k-1}{k}.
    \end{align*}
    Now switch the sums back. This yields
    \[\sum_{i=1}^{m} \left( \sum_{j=0}^{m-i} (-1)^{j} \binom{k+1}{j}\right) \binom{i d+k-1}{k}  = \sum_{i=1}^{m} (-1)^{m-i} \binom{k}{m-i} \binom{i d+k-1}{k}.\]
\end{proof}
\begin{corollary}\label{cor:space-spanning-bound}
    We have $\Pws_m^k(\KK^d) = (\KK^d)^{\tensor k}$ if and only if $m\geq \lfloor \frac{d-1}{d} k\rfloor+1$.
\end{corollary}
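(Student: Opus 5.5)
By \Cref{cor:dim-of-pws} (equivalently, by \Cref{thm:main}), $\Pws_m^k(\KK^d) = (\KK^d)^{\tensor k}$ holds exactly when $\Pwa^k_{k-m}(\KK^d)=0$. That in turn holds exactly when no word of length $k$ over $\{1,\ldots,d\}$ has at least $m$ descents. So the statement reduces to a purely combinatorial fact. Let $M(k,d)$ be the maximal number of descents of a length $k$ word over $[d]$. I will show
\[M(k,d)=\left\lfloor \tfrac{d-1}{d}k\right\rfloor .\]
The corollary then follows, since the condition "$m > M(k,d)$" is the same as "$m\geq \lfloor \frac{d-1}{d}k\rfloor +1$".

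For the upper bound, I decompose a word $w$ into its maximal weakly increasing consecutive runs. These are the blocks of $\delta(w)$, so their number is $|\delta(w)| = |D(w)|+1$. The key observation is that a strictly decreasing consecutive run uses distinct letters, so it has length at most $d$. Equivalently, each block of the non-descent composition $\xi(w)$ has size at most $d$. Since $\xi(w)$ is a composition of $k$, this gives $|\xi(w)| \geq \lceil k/d\rceil$. Using $|\delta(w)|+|\xi(w)|=k+1$, I obtain
\[|D(w)| = |\delta(w)|-1 = k-|\xi(w)| \leq k-\lceil k/d\rceil = \lfloor \tfrac{(d-1)k}{d}\rfloor .\]
The last equality is the elementary identity $k-\lceil k/d\rceil=\lfloor k-k/d\rfloor$.

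For the lower bound, I construct a word attaining this bound. Write $k=qd+r$ with $0\le r<d$. Take $w$ to be the concatenation of $q$ copies of the strictly decreasing word $(d,d-1,\ldots,1)$, followed by $(r,r-1,\ldots,1)$ if $r>0$. The non-descents occur only at the junctions between blocks. Consequently $\xi(w)$ consists of blocks of sizes $d,\ldots,d,r$, so $|\xi(w)|=\lceil k/d\rceil$ and $|D(w)|=k-\lceil k/d\rceil$.

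Combining the two bounds, $\Pwa^k_{k-m}(\KK^d)\neq 0$ if and only if $m\le M(k,d)$, which is exactly the claim. The only step needing care is the upper bound: the translation from descents to the block sizes of $\xi(w)$, and then the floor/ceiling identity. Everything else is routine.
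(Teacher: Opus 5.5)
Your proof is correct and follows the same route as the paper. Both reduce, via \Cref{cor:dim-of-pws}, to computing the maximal number of descents of a length-$k$ word, and both realize that maximum with concatenated blocks $(d,d-1,\ldots,1)$. You also prove the maximality: strictly decreasing runs have length at most $d$, so $|\xi(w)|\geq\lceil k/d\rceil$. The paper only asserts this maximality, so your version is the more complete one.
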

\begin{proof}
    By \Cref{cor:dim-of-pws}, $\Pws_m^k(\KK^d) = (\KK^d)^{\tensor k}$ if and only if every length $k$ sequence of numbers $1,\ldots,d$ has at most $m - 1$ descents. The number of descents is maximized by the length $k$ sequence $(d,d-1,\ldots,1,d,d-1,\ldots)$ with $\lfloor (k-1) - (\frac{k}{d}-1) \rfloor =\lfloor \frac{d-1}{d} k\rfloor$ descents. We obtain the sharp bound $\lfloor \frac{d-1}{d} k \rfloor \leq m - 1$.
\end{proof}

\begin{example}
    \Cref{tab:dim-exmp} shows the dimensions of $\Pws^k_m(\KK^2)$ and $\Pws^k_m(\KK^3)$ for different choices of $k$ and $m$.

    The column sequences for $m = 2$ and $m=3$ can be found in the OEIS \cite{OEIS} as the first rows of \texttt{A255992} (\texttt{A000125}, `cake numbers') and \texttt{A256816} for $d=2$, and the first rows of \texttt{A255107} and \texttt{A255622} for $d=3$.
    
    \begin{table}[h]
        \centering
        \begin{subtable}{0.45\linewidth}
        \begin{tabular}{c|ccccc}
            $k \backslash m$ & 1 & 2 & 3 & 4 & 5 \\ \hline
            2 & 3 & \underline 4 &  &  &  \\
            3 & 4 & \underline 8 & &  &  \\
            4 & 5 & 15 & \underline{16} &  &  \\
            5 & 6 & 26 & \underline{32} &  & \\
            6 & 7 & 42 & 63 & \underline{64} & \\
            7 & 8 & 64 & 120 & \underline{128} & \\
            8 & 9 & 93 & 219 & 255 & \underline{256} \\
        \end{tabular}
        \subcaption{Dimensions of $\Pws^k_m(\KK^2)$}
        \end{subtable}
        \begin{subtable}{0.45\linewidth}
        \begin{tabular}{c|cccccc}
            $k \backslash m$ & 1 & 2 & 3 & 4 & 5 & 6 \\ \hline
            2 & 6 & \underline 9 &  &  & & \\
            3 & 10 & 26 & \underline{27} &  & & \\
            4 & 15 & 66 & \underline{81} &  & & \\
            5 & 21 & 147 & 237 & \underline{243}  & & \\
            6 & 28 & 294 & 651 & 728 & \underline{729} & \\
            7 & 36 & 540 & 1647 & 2151 & \underline{2187} & \\
            8 & 45 & 927 & 3834 & 6138 & 6552 & \underline{6561} \\
        \end{tabular}
        \subcaption{Dimensions of $\Pws^k_m(\KK^3)$}
        \end{subtable}
        \caption{Dimensions of spaces of piecewise symmetric tensors. We omit the entries in the rows once the ambient dimensions ($d^k$, underlined) are attained (cf.\ \Cref{cor:space-spanning-bound}).}
        \label{tab:dim-exmp}
    \end{table}

\end{example}

\begin{lemma}\label{lmm:sym-bases}
    Let $\alpha \in \comp(k)$. The space $\Sym^{\alpha}(\KK^d)$ is spanned by the basis tensors $\rho_k(e_w)$ for sequences $w$ such that $\delta(w) \leq \alpha$. Similarly, $\AntiSym^{\alpha}(\KK^d)$ is spanned by the basis tensors $\rho_0(e_w)$ for $w$ such that $\xi(w) \leq \alpha$.
\end{lemma}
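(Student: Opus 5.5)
The plan has three steps: check that the proposed tensors lie in $\Sym^\alpha(\KK^d)$, note that they are linearly independent, and compare their number with $\dim \Sym^\alpha(\KK^d)$. The alternating case is handled in the same way.

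\emph{Containment.} With $m=k$ we have $|\delta(w)|\le k$ for every $w$, so $\rho_k(e_w)=e_w.p_{\delta(w)}\in \Sym^{\delta(w)}(\KK^d)$. If $\delta(w)\le\alpha$, then \Cref{rmk:sym_antisym_subset} gives $\Sym^{\delta(w)}(\KK^d)\subseteq\Sym^\alpha(\KK^d)$. Likewise, with $m=0$ we have $|\delta(w)|\ge 1$ for every $w$, so $\rho_0(e_w)=e_w.q_{\xi(w)}\in\AntiSym^{\xi(w)}(\KK^d)\subseteq \AntiSym^\alpha(\KK^d)$ whenever $\xi(w)\le\alpha$.

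\emph{Independence.} By \Cref{lmm:auto}, $\rho_k$ and $\rho_0$ are automorphisms. Hence the families $\{\rho_k(e_w)\}$ and $\{\rho_0(e_w)\}$ are linearly independent.

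\emph{Dimension count.} This is the main step. Write $\alpha=(\alpha_1,\dots,\alpha_\ell)$. Then $\dim\Sym^\alpha(\KK^d)=\prod_i\binom{d+\alpha_i-1}{\alpha_i}$, the number of tuples of multisets of sizes $\alpha_1,\dots,\alpha_\ell$ from $[d]$. On the other side, $\delta(w)\le\alpha$ means $\alpha$ refines $\delta(w)$, which under the lattice isomorphism $J$ is $D(w)\subseteq J(\alpha)$. So we must count words with no descent outside $J(\alpha)$, i.e.\ words whose restriction to each block of positions determined by $\alpha$ is weakly increasing. Such a word is uniquely determined by, and determines, a choice of a weakly increasing word of length $\alpha_i$ on each block. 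So there are exactly $\prod_i\binom{d+\alpha_i-1}{\alpha_i}$ of them, and the counts agree.

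Similarly, $\xi(w)\le\alpha$ means $[k-1]\setminus D(w)\subseteq J(\alpha)$: every non-block-boundary position is a descent, so $w$ is strictly decreasing on each block. The number of such words is $\prod_i\binom{d}{\alpha_i}=\dim\AntiSym^\alpha(\KK^d)$. Together with containment and independence, this shows both families are bases.

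The only thing to be careful about is the direction of the refinement order. By \Cref{rmk:sym_antisym_subset}, $\delta(w)\le\alpha$ must mean $J(\delta(w))\subseteq J(\alpha)$, so that $\alpha$ has more cut points than $\delta(w)$. This is the direction that makes the containment step valid and gives the block-wise monotonicity used in the count.
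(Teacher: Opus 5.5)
Your proposal is correct and follows essentially the same route as the paper. Both arguments get containment from the definitions of $\rho_k,\rho_0$ together with \Cref{rmk:sym_antisym_subset}, get linear independence from the automorphism property (\Cref{lmm:auto}, which the paper invokes through \Cref{thm:main}), and finish by counting words that are weakly increasing (resp.\ strictly decreasing) on each block of $\alpha$ against $\dim\Sym^{\alpha}(\KK^d)$ (resp.\ $\dim\AntiSym^{\alpha}(\KK^d)$).
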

\begin{proof}
    By definition of $\rho_k$, $\rho_k(e_w) \in \Sym^{\alpha}(\KK^d)$ for every $w$ such that $\alpha$ refines $\delta(w)$ and $\rho_0(e_w) \in \AntiSym^{\alpha}(\KK^d)$ for every $w$ such that $\alpha$ refines $\xi(w)$. Thus, by \Cref{thm:main} it suffices to count dimensions. For a sequence $w$, $\alpha$ is a refinement of $\delta(w)$ if and only if on the parts of the partition $\alpha$, $w$ is weakly increasing. But the number of weakly increasing length $\alpha_i$ sequences of numbers $1,\ldots ,d$ is precisely the dimension of $\Sym^{\alpha_i}(\KK^d)$, which implies the statement.\par
    Similarly, $\alpha$ is a refinement of $\xi(w)$ if and only if $w$ is strictly decreasing on the parts of the partition $\alpha$, and again, the number of strictly decreasing length $\alpha_i$ sequences of numbers $1,\ldots, d$ is precisely the dimension of $\AntiSym^{\alpha_i}(\KK^d)$. Thus, we conclude.
\end{proof}

\begin{corollary}\label{cor:sym-intersections}
    For compositions $\alpha^{(1)},\ldots,\alpha^{(n)}$, we have \[\Sym^{\alpha^{(1)}}(\KK^d) \cap \ldots \cap \Sym^{\alpha^{(n)}}(\KK^d) = \Sym^{\alpha^{(1)} \land \ldots \land \alpha^{(n)}}(\KK^d)\]
    and \[\AntiSym^{\alpha^{(1)}}(\KK^d) \cap \ldots \cap \AntiSym^{\alpha^{(n)}}(\KK^d) = \AntiSym^{\alpha^{(1)} \land \ldots \land \alpha^{(n)}}(\KK^d)\]
    where $\alpha^{(1)} \land \ldots \land \alpha^{(n)}$ denotes the meet in the lattice $\comp(k)$, that is, the finest composition refined by all $\alpha^{(i)}$.

    In particular, if $d\geq k$ then the intersection lattice of the subspace arrangements given by the $\Sym^\alpha(\KK^d)$ resp.\ $\AntiSym^\alpha(\KK^d)$ is isomorphic to the lattice $\comp(k)$.
\end{corollary}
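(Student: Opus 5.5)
The plan is to use the bases from \Cref{lmm:sym-bases}. They are all drawn from the single basis $\{\rho_k(e_w)\}$ (respectively $\{\rho_0(e_w)\}$) of $(\KK^d)^{\tensor k}$, and that basis is simultaneously adapted to every $\Sym^\alpha(\KK^d)$. For a family of subspaces each spanned by a subset of one common basis, the intersection is spanned by the intersection of those subsets. So we get
\[\Sym^{\alpha^{(1)}}(\KK^d) \cap \ldots \cap \Sym^{\alpha^{(n)}}(\KK^d) = \spann\{\rho_k(e_w) \mid \delta(w) \leq \alpha^{(i)} \text{ for all } i\}.\]
By the definition of the meet, $\delta(w)\leq \alpha^{(i)}$ for all $i$ is equivalent to $\delta(w) \leq \alpha^{(1)}\land\ldots\land\alpha^{(n)}$. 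Applying \Cref{lmm:sym-bases} once more to the meet identifies the right-hand side with $\Sym^{\alpha^{(1)} \land \ldots \land \alpha^{(n)}}(\KK^d)$. The alternating case is identical, using $\rho_0$ and $\xi(w)$. Note that $\rho_k(e_w)=e_w.p_{\delta(w)}$ for all $w$, since $|\delta(w)|\leq k$ always holds, and this is a basis by \Cref{lmm:auto}. The only point to state carefully is the linear-algebra fact about spans of subsets of a basis, and it is immediate: expand an element of the intersection in the basis and compare coefficients.

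For the lattice statement, assume $d\geq k$. The map $\alpha\mapsto\Sym^\alpha(\KK^d)$ is order preserving by \Cref{rmk:sym_antisym_subset}, and by the first part it sends meets to intersections. So the intersection lattice consists exactly of the spaces $\Sym^\alpha(\KK^d)$, and it remains to show the map is injective, or in fact an order embedding. For that, it suffices to show that for each composition $\alpha$ there is a word $w$ with $\delta(w)=\alpha$. Then $\rho_k(e_w)\in\Sym^\beta(\KK^d)$ if and only if $\alpha\leq\beta$, so $\Sym^\alpha(\KK^d)\subseteq\Sym^\beta(\KK^d)$ forces $\alpha\leq\beta$.

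To construct such a word with $d\geq k$, write the blocks of $w$ as increasing runs, listed right to left. The last block of length $\alpha_\ell$ gets the smallest letters $1,\dots,\alpha_\ell$, the previous block gets the next letters, and so on. Each block is then strictly increasing, and each block boundary is a strict descent. This uses $k\leq d$ distinct letters. The antisymmetric case needs a word with $\xi(w)=\alpha$, which means strictly decreasing blocks with ascents at the boundaries. Use the same construction reversed: the first block gets the smallest letters in decreasing order, the next block the next letters, and so on.

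The main obstacle is only this realizability step, which is where the hypothesis $d\geq k$ enters. For small $d$, distinct compositions can give the same space (for instance when $\AntiSym^{\alpha_i}(\KK^d)=0$), so injectivity genuinely needs it.
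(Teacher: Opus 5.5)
Your argument for the two intersection formulas is the paper's: all the spaces are spanned by subsets of the single basis $\{\rho_k(e_w)\}$ (resp.\ $\{\rho_0(e_w)\}$) from \Cref{lmm:sym-bases}, and $\delta(w)\leq\alpha^{(i)}$ for all $i$ is equivalent to $\delta(w)\leq\alpha^{(1)}\land\ldots\land\alpha^{(n)}$. The paper's proof leaves the lattice clause unaddressed, and your order-embedding argument via words with $\delta(w)=\alpha$ (resp.\ $\xi(w)=\alpha$) correctly fills this in; only your closing remark that injectivity genuinely needs $d\geq k$ is too strong, since for $\Sym$ already $d\geq 2$ suffices (for $j\in J(\alpha)\setminus J(\beta)$, the word consisting of $j$ letters $2$ followed by $k-j$ letters $1$ gives a basis tensor in $\Sym^\alpha(\KK^d)\setminus\Sym^\beta(\KK^d)$).
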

\begin{proof}
    By \Cref{lmm:sym-bases} the intersection \[\Sym^{\alpha^{(1)}}(\KK^d) \cap \ldots \cap \Sym^{\alpha^{(n)}}(\KK^d)\] is spanned by the tensors $\rho_k(e_w)$ with $\delta(w) \leq \alpha^{(1)},\ldots,\alpha^{(n)}$. Those are precisely the tensors $\rho_k(e_w)$ with $\delta(w) \leq \alpha^{(1)} \land \ldots \land \alpha^{(n)}$ which yields the statement by another application of \Cref{lmm:sym-bases}. The same argument applies with $\rho_k$ replaced by $\rho_0$ and $\delta(w)$ replaced by $\xi(w)$ to show the statement about the spaces $\AntiSym^{\alpha}(\KK^d)$.
\end{proof}

\section{Descent representations}\label{sec:refine-dec}

In this section, we set out to answer the following three questions about the structure of piecewise symmetric and piecewise alternating tensors:
\begin{enumerate}
    \item The spaces $\Sym^\alpha(\KK^d)$ and $\AntiSym^\alpha(\KK^d)$ in the sums defining $\Pws^k_m(\KK^d)$ and $\Pwa^k_m(\KK^d)$ overlap. What are the precise contributions of $\Sym^\alpha(\KK^d)$ and $\AntiSym^\alpha(\KK^d)$ to these sums for varying $\alpha$?
    \item The spaces $\Pws^k_m(\KK^d)$ and $\Pwa^k_m(\KK^d)$ are $\mathrm{GL}(\KK^d)$-representations. How do they decompose into irreducible representations?
    \item How can we compute the projectors to $\Pws^k_m(\KK^d)$ and $\Pwa^k_m(\KK^d)$?
\end{enumerate}

We start with the first question, which will lead to a refinement of our decomposition. By \Cref{cor:sym-intersections}, the intersections of the subspaces $\Sym^\alpha(\KK^d)$ (resp.\ $\AntiSym^\alpha(\KK^d)$) for different compositions $\alpha$ are governed by the lattice $\comp(k)$. Working upwards through this lattice, we can write every $\Sym^\alpha(\KK^d)$ and $\AntiSym^\alpha(\KK^d)$ and consequently $\Pws^k_m(\KK^d), \Pwa^k_{k-m}(\KK^d)$ as a direct sum of spaces $V_\beta$ and $W_\beta$ that we associate to compositions $\beta \leq \alpha$.

Indeed, for a composition $\alpha$ of $k$, recall the idempotents $p_\alpha$ and $q_\alpha$ in the group algebra $\KK[S_k]$ from \eqref{eq:symmetrizers}. Let $R: \KK[S_k] \to \mathrm{End}^{\mathrm{op}}((\KK^d)^{\tensor k})$ be the algebra homomorphism corresponding to the right action of $S_k$ on $(\KK^d)^{\tensor k}$. Then $R(p_\alpha)$ and $R(q_\alpha)$ are the projections onto $\Sym^\alpha(\KK^d)$ and $\AntiSym^\alpha(\KK^d)$, respectively.

Now  we set $V_{(k)}(\KK^d) := \Sym^k(\KK^d)$, $W_{(k)}(\KK^d) := \AntiSym^k(\KK^d)$ and for all compositions $\alpha \in \comp(k)$ with $\alpha > (k)$: 
\[V_\alpha(\KK^d) := \bigcap_{\beta < \alpha}\ker R(p_\beta)|_{\Sym^\alpha(\KK^d)} \quad \text{and} \quad W_\alpha(\KK^d) := \bigcap_{\beta < \alpha}\ker R( q_\beta)|_{\AntiSym^\alpha(\KK^d)}.\]
That is, $V_\alpha(\KK^d)$ (resp.\ $W_\alpha(\KK^d)$) is the space of all $x \in \Sym^\alpha(\KK^d)$ (resp.\ $x \in \AntiSym^\alpha(\KK^d)$) that vanish under the projection to any $\Sym^\beta(\KK^d)$ (resp.\ $\AntiSym^\beta(\KK^d)$) for $\beta < \alpha$.
Note that since $R(p_{\alpha})$ and $R(q_{\alpha})$ are $\GL(\KK^d)$-equivariant, $V_\alpha(\KK^d)$ and $W_\alpha(\KK^d)$ are again $\GL(\KK^d)$-invariant.
\begin{proposition}\label{prop:decomp-partial-sym}
    For every composition $\alpha$ of $k$, we have
    \[\Sym^{\alpha}(\KK^d) = \bigoplus_{\beta \leq \alpha} V_\beta(\KK^d), \qquad \AntiSym^{\alpha}(\KK^d) = \bigoplus_{\beta \leq \alpha} W_\beta(\KK^d)\]
    as subspaces of $(\KK^d)^{\tensor k}$.
\end{proposition}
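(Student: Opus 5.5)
Since the projectors $R(p_\beta)$ are orthogonal projections and $\Sym^\beta(\KK^d)\subseteq\Sym^\alpha(\KK^d)$ for $\beta\le\alpha$, the space $V_\alpha(\KK^d)$ is the orthogonal complement inside $\Sym^\alpha(\KK^d)$ of $U_\alpha:=\sum_{\beta<\alpha}\Sym^\beta(\KK^d)$. Indeed, $x\in\Sym^\alpha(\KK^d)$ lies in $\ker R(p_\beta)$ if and only if $x\perp\Sym^\beta(\KK^d)$. Hence
\[\Sym^\alpha(\KK^d)=U_\alpha\oplus V_\alpha(\KK^d),\]
and the same holds with $\AntiSym$, $q_\beta$ and $W_\alpha(\KK^d)$. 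I then prove the proposition by induction on $\alpha$ in the lattice $\comp(k)$, starting from $\alpha=(k)$, where the statement is the definition.

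For the induction step, the inductive hypothesis gives $\Sym^\beta(\KK^d)=\bigoplus_{\gamma\le\beta}V_\gamma(\KK^d)$ for each $\beta<\alpha$. Summing over $\beta<\alpha$ yields
\[U_\alpha=\sum_{\gamma<\alpha}V_\gamma(\KK^d).\]
It then suffices to show that this sum is direct. Once that holds, $\Sym^\alpha(\KK^d)=\bigoplus_{\gamma<\alpha}V_\gamma(\KK^d)\oplus V_\alpha(\KK^d)$, which is the claim.

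Directness is the main point. I would show, more strongly, that the $V_\gamma(\KK^d)$ are pairwise orthogonal for all $\gamma\ne\gamma'$. There are two cases.
\begin{itemize}
\item If $\gamma'<\gamma$, then $V_\gamma(\KK^d)\subseteq\ker R(p_{\gamma'})$, and $V_{\gamma'}(\KK^d)\subseteq\Sym^{\gamma'}(\KK^d)$, so the two spaces are orthogonal.
\item If $\gamma,\gamma'$ are incomparable, let $\mu=\gamma\land\gamma'$, so $\mu<\gamma$. By \Cref{cor:sym-intersections}, $\Sym^\gamma(\KK^d)\cap\Sym^{\gamma'}(\KK^d)=\Sym^\mu(\KK^d)$. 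Take $x\in V_\gamma(\KK^d)$ and $y\in V_{\gamma'}(\KK^d)\subseteq\Sym^{\gamma'}(\KK^d)$. Then $\langle x,y\rangle=\langle x,R(p_\gamma)y\rangle$. The idempotents $p_\gamma,p_{\gamma'}$ commute as orthogonal projections onto spaces whose intersection is $\Sym^\mu$, and this is where care is needed. I would instead argue via the explicit bases of \Cref{lmm:sym-bases}: in the monomial basis, $R(p_\gamma)$ maps $\Sym^{\gamma'}(\KK^d)$ into $\Sym^\gamma\cap\Sym^{\gamma'}=\Sym^\mu$. This is because averaging over $S_\gamma$ a tensor symmetric under $S_{\gamma'}$ yields a tensor symmetric under the group generated by $S_\gamma$ and $S_{\gamma'}$, which is $S_\mu$ for connected blocks. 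Hence $\langle x,y\rangle=\langle x,R(p_\gamma)y\rangle=0$, since $x\perp\Sym^\mu(\KK^d)$.
\end{itemize}

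The expected obstacle is that last group-theoretic claim. I would verify it carefully by noting that $p_\gamma p_{\gamma'}=p_{\gamma'}p_\gamma$, since both are averages over Young subgroups generated by adjacent transpositions. Their product is then an idempotent whose image is fixed by both groups, i.e.\ lies in $\Sym^\mu(\KK^d)$. The alternating case is identical, using $q$ and signs.
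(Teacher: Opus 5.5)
\textbf{There is a genuine gap in the directness step.} Your skeleton matches the paper's proof. $V_\alpha(\KK^d)$ is the orthogonal complement of $U_\alpha$ in $\Sym^\alpha(\KK^d)$ because the $R(p_\beta)$ are self-adjoint, and induction gives $U_\alpha=\sum_{\gamma<\alpha}V_\gamma(\KK^d)$. You are also right that directness of this sum is the real content; the paper's proof passes over it with ``by induction''.

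Your argument for directness fails because the stronger claim, pairwise orthogonality of the $V_\gamma(\KK^d)$, is false. Take $k=3$ and $d\geq 2$. The tensor $x=2e_{112}-e_{121}-e_{211}$ is symmetric in positions $1,2$ and is killed by $R(p_{(3)})$. Since $(3)$ is the only composition below $(2,1)$, this gives $x\in V_{(2,1)}(\KK^d)$. Likewise $y=2e_{211}-e_{112}-e_{121}\in V_{(1,2)}(\KK^d)$. But $\langle x,y\rangle=-2+1-2=-3\neq 0$.

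Both of your justifications break on this example:
\begin{itemize}
\item The symmetrizers do not commute: $p_{(2,1)}p_{(1,2)}=\frac14(1+(12)+(23)+(12)(23))$ differs from $p_{(1,2)}p_{(2,1)}=\frac14(1+(12)+(23)+(23)(12))$. Averages over subgroups $H,K$ commute only when $HK$ is a subgroup, and this fails for $\langle(12)\rangle$ and $\langle(23)\rangle$.
\item Averaging over $S_\gamma$ does not preserve $S_{\gamma'}$-symmetry: $e_{122}\in\Sym^{(1,2)}(\KK^d)$, but $e_{122}.p_{(2,1)}=\frac12(e_{122}+e_{212})\notin\Sym^3(\KK^d)$.
\end{itemize}

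\textbf{How to repair it.} Directness comes from the common adapted basis of \Cref{lmm:sym-bases}, not from orthogonality.
\begin{itemize}
\item By \Cref{lmm:auto}, the tensors $\rho_k(e_w)$ over all words $w$ are linearly independent.
\item By \Cref{lmm:sym-bases}, each $\Sym^\beta(\KK^d)$ is spanned by those with $\delta(w)\le\beta$. Hence $U_\alpha$ is spanned by the $\rho_k(e_w)$ with $\delta(w)<\alpha$.
\item Therefore $\dim V_\alpha(\KK^d)=\dim\Sym^\alpha(\KK^d)-\dim U_\alpha=\#\{w:\delta(w)=\alpha\}$, and this holds for every $\alpha$.
\item It follows that $\sum_{\gamma<\alpha}\dim V_\gamma(\KK^d)=\#\{w:\delta(w)<\alpha\}=\dim U_\alpha$.
\end{itemize}
Since the $V_\gamma(\KK^d)$ with $\gamma<\alpha$ span $U_\alpha$ by the inductive hypothesis, this dimension count forces their sum to be direct. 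The alternating case is identical, using $\rho_0$, $\xi(w)$ and the $q_\beta$.
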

\begin{proof}
    We prove the statement by induction on the length $\ell$ of $\alpha$. If $\ell = 1$, then $\alpha = (k)$ and the statement holds by definition. Now assume $\ell > 1$ and that the statement holds for the compositions of length $\leq \ell -1$. Consider the sum $Q := \sum_{\beta < \alpha} \Sym^\beta(\KK^d) = \sum_{\beta < \alpha} p_\beta(\Sym^{\alpha}(\KK^d)) \subseteq \Sym^\alpha(\KK^d)$. By induction, $Q = \bigoplus_{\beta < \alpha} V_\beta(\KK^d)$. Now note that
    \[Q^\perp = \Bigl(\sum_{\beta < \alpha} p_\beta(\Sym^{\alpha}(\KK^d))\Bigr)^\perp = \bigcap_{\beta < \alpha} p_\beta(\Sym^{\alpha}(\KK^d))^\perp\]
    where the orthogonal complement is taken in $\Sym^\alpha(\KK^d)$ with respect to the standard inner product on $(\KK^d)^{\tensor k}$. But since the standard inner product is invariant under the action of $S_k$, the symmetrizers $p_\beta$ are self-adjoint and thus $p_\beta(\Sym^{\alpha}(\KK^d))^\perp = \ker p_\beta|_{\Sym^\alpha(\KK^d)}$. So $Q^\perp = V_\alpha(\KK^d)$ which completes the induction step. The same argument applies to show the statement about the $W_\beta(\KK^d)$.
\end{proof}
\begin{corollary}\label{cor:tot-decomp}
    We have
    \[(\KK^d)^{\tensor k} = \bigoplus_{\alpha \in \comp(k)} V_{\alpha}(\KK^d) = \bigoplus_{\alpha \in \comp(k)} W_{\alpha}(\KK^d)\]
\end{corollary}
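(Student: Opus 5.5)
The plan is to apply \Cref{prop:decomp-partial-sym} to the finest composition in the lattice $\comp(k)$. For the $V$'s, take $\alpha = (1,1,\ldots,1)$, the composition of length $k$, which is the top element of $\comp(k)$ under the order ``refined by''. For this $\alpha$ we have $S_\alpha$ trivial, so $p_\alpha = q_\alpha = \mathrm{id}$. Consequently $\Sym^{(1,\ldots,1)}(\KK^d) = \AntiSym^{(1,\ldots,1)}(\KK^d) = (\KK^d)^{\tensor k}$.

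Since every composition $\beta \in \comp(k)$ satisfies $\beta \leq (1,\ldots,1)$, \Cref{prop:decomp-partial-sym} gives
\[(\KK^d)^{\tensor k} = \Sym^{(1,\ldots,1)}(\KK^d) = \bigoplus_{\beta \in \comp(k)} V_\beta(\KK^d).\]
The identical argument with the antisymmetric half of \Cref{prop:decomp-partial-sym} yields the decomposition into the $W_\beta(\KK^d)$.

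There is no real obstacle here. The only point to check is that $(1,\ldots,1)$ is indeed the maximum of $\comp(k)$. Under the bijection $J$ of \eqref{def:J-bij}, it corresponds to $J((1,\ldots,1)) = [k-1]$, which contains every subset $J(\beta)$. Hence every $\beta$ is refined by $(1,\ldots,1)$.

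The directness of the sum is inherited from \Cref{prop:decomp-partial-sym}. As a consistency check, one can compare with \Cref{thm:main} for $m=k$: there $\Pws^k_k(\KK^d)$ is the whole space and $\Pwa^k_0(\KK^d)=0$.
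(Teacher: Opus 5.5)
Your proposal is correct and is exactly the paper's proof: apply \Cref{prop:decomp-partial-sym} to $\alpha=(1,\ldots,1)$, for which $\Sym^{\alpha}(\KK^d)=\AntiSym^{\alpha}(\KK^d)=(\KK^d)^{\tensor k}$ and every $\beta\in\comp(k)$ satisfies $\beta\leq\alpha$. Your extra checks that $(1,\ldots,1)$ is the top element via $J$ and that $p_\alpha=q_\alpha=\mathrm{id}$ are accurate and fill in what the paper leaves implicit.
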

\begin{proof}
    Apply \Cref{prop:decomp-partial-sym} to $\alpha = (1,\ldots, 1)$.
\end{proof}

\begin{corollary}\label{cor:refine-decomp}
    The decomposition $(\KK^d)^{\tensor k} = \Pws^k_m(\KK^d) \oplus \Pwa^k_{k-m}(\KK^d)$ from \Cref{thm:main} is refined by
    \[(\KK^d)^{\tensor k} = \bigoplus_{\alpha \in \comp(k,\leq m)} V_\alpha(\KK^d) \ \oplus \ \bigoplus_{\alpha \in \comp(k,\leq k - m)} W_\alpha(\KK^d) \]
\end{corollary}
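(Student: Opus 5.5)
The plan is to combine \Cref{thm:main} with \Cref{prop:decomp-partial-sym}. The proof has two parts. First, each of the two summands from \Cref{thm:main} splits as a direct sum of the spaces $V_\alpha$, respectively $W_\alpha$. Second, these splittings assemble into the displayed decomposition.

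For the first part, start from $\Pws^k_m(\KK^d)=\sum_{\alpha\in\comp(k,\leq m)}\Sym^\alpha(\KK^d)$. Apply \Cref{prop:decomp-partial-sym} to each $\alpha$ to write $\Sym^\alpha(\KK^d)=\bigoplus_{\beta\leq\alpha}V_\beta(\KK^d)$. Whenever $\beta\leq\alpha$, $\beta$ is coarser than $\alpha$, so $|\beta|\leq|\alpha|\leq m$, and hence $\beta\in\comp(k,\leq m)$. Conversely, each $\beta\in\comp(k,\leq m)$ already appears with $\alpha=\beta$. Therefore
\[\Pws^k_m(\KK^d)=\sum_{\beta\in\comp(k,\leq m)}V_\beta(\KK^d).\]
This sum is direct, because by \Cref{cor:tot-decomp} the family $\{V_\beta(\KK^d)\}_{\beta\in\comp(k)}$ is independent. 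The same argument with \Cref{prop:decomp-partial-sym} for the alternating spaces gives
\[\Pwa^k_{k-m}(\KK^d)=\bigoplus_{\beta\in\comp(k,\leq k-m)}W_\beta(\KK^d).\]

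For the second part, substitute both expressions into the decomposition $(\KK^d)^{\tensor k}=\Pws^k_m(\KK^d)\oplus\Pwa^k_{k-m}(\KK^d)$ of \Cref{thm:main}. A direct sum of two subspaces, each of which is itself an internal direct sum, is a direct sum of all the pieces. This yields the claimed refinement.

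If one also wants the pieces to be mutually orthogonal, this follows as well. The $V_\beta(\KK^d)$ are pairwise orthogonal by the construction in \Cref{prop:decomp-partial-sym}: $V_\alpha(\KK^d)$ is the orthogonal complement of $\sum_{\beta<\alpha}\Sym^\beta(\KK^d)$ inside $\Sym^\alpha(\KK^d)$. Two incomparable compositions $\alpha,\alpha'$ can be handled by passing to their join $\gamma=\alpha\lor\alpha'$ and using the orthogonal direct sum $\Sym^\gamma(\KK^d)=\bigoplus_{\beta\leq\gamma}V_\beta(\KK^d)$. Orthogonality between the $V$-pieces and the $W$-pieces is then inherited from \Cref{lmm:orth}.

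I do not expect a real obstacle here. The only point needing care is the directness of the sum of the $V_\beta$ over $\comp(k,\leq m)$. That comes for free from \Cref{cor:tot-decomp}, since a subfamily of an independent family of subspaces is independent.
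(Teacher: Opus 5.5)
Your main argument is correct and is essentially the paper's proof. The paper also deduces the corollary from \Cref{thm:main} and \Cref{prop:decomp-partial-sym}, using that $\comp(k,\leq i)$ is closed under coarsening; you additionally make the directness explicit via \Cref{cor:tot-decomp}.

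However, your optional paragraph on mutual orthogonality is wrong and should be dropped. The spaces $V_\beta(\KK^d)$ are not pairwise orthogonal in general. The construction only gives $V_\gamma(\KK^d)\perp\sum_{\beta<\gamma}\Sym^\beta(\KK^d)$, i.e.\ orthogonality for comparable pairs, and \Cref{prop:decomp-partial-sym} asserts a direct sum, not an orthogonal one. Your join argument is therefore circular. For incomparable $\alpha,\alpha'$ with join $\gamma$, both $\alpha,\alpha'<\gamma$, so both $V_\alpha(\KK^d)$ and $V_{\alpha'}(\KK^d)$ lie inside $\sum_{\beta<\gamma}\Sym^\beta(\KK^d)$, where no orthogonality is available.

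Here is a concrete counterexample with $d=2$, $k=3$. The tensor $2e_{112}-e_{121}-e_{211}$ lies in $V_{(2,1)}(\KK^2)$, since it is symmetric in positions $1,2$ and orthogonal to $\Sym^3(\KK^2)$. The tensor $2e_{211}-e_{112}-e_{121}$ lies in $V_{(1,2)}(\KK^2)$, since it is symmetric in positions $2,3$ and orthogonal to $\Sym^3(\KK^2)$. Yet their inner product is $-3$.

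What does survive is the orthogonality between the $V$-block and the $W$-block, which comes from \Cref{lmm:orth}. The corollary itself needs nothing more than the direct-sum statement you proved.
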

\begin{proof}
    This follows from \Cref{thm:main} and \Cref{prop:decomp-partial-sym} as the sets $\comp(k,\leq \! i)$ are stable under coarsenings, i.e., lower sets in the lattice $\comp(k)$.
\end{proof}
\autonom{\(V_\alpha(\KK^d), \, W_\alpha(\KK^d)\)}{The spaces appearing in the refined decomposition in \Cref{cor:refine-decomp}.}

Since all the spaces $V_\alpha(\KK^d)$ and $W_\alpha(\KK^d)$ are $\mathrm{GL}(\KK^d)$-invariant, by Schur-Weyl duality there is a decomposition of the group algebra $\KK[S_k]$ corresponding to the one in \Cref{cor:refine-decomp}. We will now show that this decomposition is a variant of Solomon's classical decomposition of the group algebra into left ideals \cite{SOLOMON1968220, SOLOMON1976255}. 
If $\alpha=(\alpha_1,\dots,\alpha_\ell)$ is a composition of $k$, recall from \eqref{def:J-bij} that we write
\[J(\alpha) = \{\alpha_1,\alpha_1+\alpha_2,\ldots,\alpha_1+\ldots+\alpha_{\ell-1}\}\]
for the corresponding subset of $[k-1]$. Let $\hat J(\alpha) := [k-1] \setminus J(\alpha)$ denote the complement,
and $\hat \alpha$ the corresponding composition so that $J(\hat \alpha) = \hat J(\alpha)$.
Notice that $S_\alpha = S_{\alpha_1}\times\dots\times S_{\alpha_\ell}$ is then equivalently the subgroup of $S_k$ generated by $s_i$ for $i\in \hat J(\alpha)$.
 In \cite[Theorem 2]{SOLOMON1968220}, Solomon proves the inner direct sum decomposition
\[\KK[S_k] = \bigoplus_{\alpha} \KK[S_k]p_{\hat \alpha} q_{\alpha} \]
of the group algebra into left ideals, where $\alpha$ runs over all compositions of $k$. One can show a slightly more flexible version of this statement, making use of the arguments in \cite{SOLOMON1968220}:

\begin{theorem}\label{thm:grp-alg-decomp}
    For $1\leq m\leq k$ we have an inner direct sum decomposition
    \[\KK[S_k] = \bigoplus_{\alpha \in \comp(k,\leq m)} \KK[S_k]  q_{\hat \alpha} p_\alpha \ \oplus \ \bigoplus_{\alpha \in \comp(k,\leq k - m)} \KK[S_k]  p_{\hat \alpha} q_\alpha \]
    of the group algebra $\KK[S_k]$ of the symmetric group into left ideals.
\end{theorem}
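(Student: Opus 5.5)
The plan is to combine three ingredients: the refined tensor decomposition of \Cref{cor:refine-decomp} (to get directness), Schur--Weyl-type injectivity with $d = k$ (to transfer directness to the group algebra), and Solomon's theorem twisted by the sign automorphism (to get the dimension count). Since the statement does not involve $d$, we may fix $d = k$ throughout. Consider the linear map
\[\Phi: \KK[S_k] \to (\KK^k)^{\tensor k}, \qquad y \mapsto e_{12\ldots k}.y .\]
It is injective, because the tensors $e_{12\ldots k}.\tau$ are distinct basis vectors for distinct $\tau \in S_k$. Since $S_k$ acts on the right, $\Phi(yz) = \Phi(y).z$.

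\emph{Step 1: each left ideal lands in the correct summand.} Let $\alpha \in \comp(k)$ and $y \in \KK[S_k]$. We show that $\Phi(y\, q_{\hat\alpha} p_\alpha)$ lies in $V_\alpha(\KK^k)$.
\begin{itemize}
\item It lies in $\Sym^\alpha(\KK^k)$ because it is of the form $x.p_\alpha$.
\item For $\beta < \alpha$ we have $S_\alpha \subseteq S_\beta$, hence $p_\alpha p_\beta = p_\beta$. Moreover $J(\beta) \subsetneq J(\alpha)$, so there is some $i \in J(\alpha) \setminus J(\beta)$. Then $s_i \in S_{\hat\alpha} \cap S_\beta$, which gives
\[q_{\hat\alpha} p_\beta = (q_{\hat\alpha} s_i)(s_i p_\beta) = -q_{\hat\alpha} p_\beta ,\]
so $q_{\hat\alpha} p_\beta = 0$.
\item Consequently $\Phi(y\, q_{\hat\alpha} p_\alpha).p_\beta = \Phi(y\, q_{\hat\alpha} p_\beta) = 0$ for every $\beta < \alpha$.
\end{itemize}
By definition of $V_\alpha$, this means $\Phi(\KK[S_k] q_{\hat\alpha} p_\alpha) \subseteq V_\alpha(\KK^k)$. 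The same computation with the roles of $p$ and $q$ swapped gives $\Phi(\KK[S_k] p_{\hat\alpha} q_\alpha) \subseteq W_\alpha(\KK^k)$.

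\emph{Step 2: the sum is direct.} By \Cref{cor:refine-decomp}, the spaces $V_\alpha(\KK^k)$ for $\alpha \in \comp(k,\leq m)$ and $W_\alpha(\KK^k)$ for $\alpha \in \comp(k,\leq k-m)$ form a direct sum. By Step 1 the images under $\Phi$ of the proposed left ideals sit inside these spaces. Since $\Phi$ is injective and linear, the left ideals themselves form a direct sum inside $\KK[S_k]$.

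\emph{Step 3: dimension count.} It remains to show that the dimensions add up to $k!$. Let $\theta$ be the algebra automorphism of $\KK[S_k]$ defined by $\tau \mapsto \mathrm{sgn}(\tau)\tau$.
\begin{itemize}
\item $\theta$ swaps $p_\gamma$ and $q_\gamma$ for every composition $\gamma$.
\item Hence $\theta$ maps $\KK[S_k] q_{\hat\alpha} p_\alpha$ onto $\KK[S_k] p_{\hat\alpha} q_\alpha$, so these two left ideals have the same dimension.
\item Setting $\alpha' := \hat\alpha$, so that $\hat{\alpha'} = \alpha$, we get $\KK[S_k] p_{\hat\alpha} q_\alpha = \KK[S_k] q_{\hat{\alpha'}}\ldots$ rewritten as $\KK[S_k] p_{\alpha'} q_{\hat{\alpha'}}$; that is, $\KK[S_k] q_{\hat\alpha} p_\alpha$ has the same dimension as Solomon's ideal indexed by $\hat\alpha$.
\item Since $|J(\hat\alpha)| = k-1-|J(\alpha)|$, we have $|\hat\alpha| = k+1-|\alpha|$. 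So as $\alpha$ ranges over $\comp(k,\leq m)$, $\hat\alpha$ ranges exactly over the compositions of length $\geq k-m+1$.
\end{itemize}
Therefore the total dimension of our sum equals
\[\sum_{|\gamma| \geq k-m+1} \dim \KK[S_k] p_{\hat\gamma} q_\gamma \;+\; \sum_{|\gamma| \leq k-m} \dim \KK[S_k] p_{\hat\gamma} q_\gamma \;=\; \sum_{\gamma \in \comp(k)} \dim \KK[S_k] p_{\hat\gamma} q_\gamma \;=\; k!,\]
where the last equality is Solomon's decomposition \cite[Theorem 2]{SOLOMON1968220}. Together with the directness from Step 2, this proves that the sum is all of $\KK[S_k]$.

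The main obstacle I expect is Step 1. One must check that the right action and the kernel conditions defining $V_\alpha$ and $W_\alpha$ are compatible with left ideals, and get the orientation of the products $q_{\hat\alpha} p_\alpha p_\beta$ right. If that bookkeeping fails, the fallback is to bypass the spaces $V_\alpha, W_\alpha$ entirely and prove directness by Solomon's original leading-term argument. There one would order permutations by their descent sets and show that the generators $\tau q_{\hat\alpha} p_\alpha$, with $\tau$ having descent set related to $J(\alpha)$, are triangular with respect to $\tau$.
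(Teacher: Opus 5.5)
Your Steps 1 and 2 are correct. They also prove directness by a route different from the paper's. The paper stays inside $\KK[S_k]$: Solomon's positive definite form $F$ makes the two families orthogonal (via $q_\beta p_\alpha=0$), and Solomon's Lemmas 1--3 give directness within each family. You instead transport everything into $(\KK^k)^{\tensor k}$ through the injective right-module map $\Phi$ and read directness off \Cref{cor:refine-decomp}. That corollary does not depend on this theorem, so there is no circularity. The obstacle you anticipated in Step 1 is not a problem.

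The gap is in Step 3. The sign twist $\theta$ only gives $\dim \KK[S_k]q_{\hat\alpha}p_\alpha=\dim\KK[S_k]p_{\hat\alpha}q_\alpha$. But $\KK[S_k]p_{\hat\alpha}q_\alpha$ is Solomon's summand indexed by $\alpha$, not by $\hat\alpha$. Rewriting it as $\KK[S_k]p_{\alpha'}q_{\hat{\alpha'}}$ with $\alpha'=\hat\alpha$ is only a renaming: $p_{\alpha'}q_{\hat{\alpha'}}$ is not of Solomon's form $p_{\hat\gamma}q_\gamma$ with $\gamma=\alpha'$. Your first sum over $|\gamma|\ge k-m+1$ actually needs $\dim\KK[S_k]q_{\hat\alpha}p_\alpha=\dim\KK[S_k]p_\alpha q_{\hat\alpha}$, i.e.\ a reversal of the order of the two factors, and $\theta$ cannot produce that. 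Write $M_\gamma:=\KK[S_k]p_{\hat\gamma}q_\gamma$. With only what you proved, the total dimension is $\sum_{|\alpha|\le m}\dim M_\alpha+\sum_{|\alpha|\le k-m}\dim M_\alpha$. This equals $k!$ only once you know $\dim M_\alpha=\dim M_{\hat\alpha}$, which you never establish.

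This missing fact is exactly what the paper imports from Solomon's Lemma 12, namely $\KK[S_k]q_{\hat\alpha}p_\alpha\cong\KK[S_k]p_\alpha q_{\hat\alpha}$. You can also supply it directly:
\begin{itemize}
\item The antiautomorphism $r:\tau\mapsto\tau^{-1}$ fixes every $p_\gamma$ and $q_\gamma$, so $r(q_{\hat\alpha}p_\alpha)=p_\alpha q_{\hat\alpha}$.
\item Take the bilinear form in which $S_k$ is orthonormal. The matrix of $y\mapsto y\,r(x)$ is then the transpose of the matrix of $y\mapsto yx$.
\item Hence $\dim\KK[S_k]x=\operatorname{rank}(y\mapsto yx)=\operatorname{rank}(y\mapsto y\,r(x))=\dim\KK[S_k]r(x)$.
\end{itemize}
With this lemma (for which $\theta$ is not even needed), your reindexing $\gamma=\hat\alpha$ is justified and the dimension count closes.
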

\begin{proof}
    In the following we assume $\mathbb K = \mathbb Q$ without loss of generality. We recall the symmetric, positive definite bilinear form $F$ from the proof of \cite[Lemma 2]{SOLOMON1968220}. For $b \in \KK[S_k]$, write $T(b)$ for the $\KK$-linear endomorphism of $\KK[S_k]$ given by right multiplication with $b$. Let $r: \KK[S_k] \to \KK[S_k]$ be the involutory antiautomorphism induced by mapping $\tau \in S_k$ to $\tau^{-1}$. Then one sets $F(a,b):=\mathrm{trace}\ T(r(a)b)$.

    We claim that $\KK[S_k]  q_{\hat \alpha} p_\alpha$ and $\KK[S_k]  p_{\hat \beta} q_\beta$ are orthogonal with respect to $F$ whenever $\alpha \in \comp(k,\leq m)$ and $\beta \in \comp(k,\leq k-m)$. For this, note that $T(p_{\alpha})$ is self-adjoint with respect to $F$ since $p_\alpha = r(p_\alpha)$. Thus, 
    \[F(\KK[S_k]  q_{\hat \alpha} p_\alpha, \KK[S_k]  p_{\hat \beta} q_\beta) = F(\KK[S_k]  q_{\hat \alpha}, \KK[S_k]  p_{\hat \beta} q_\beta p_\alpha) \]
    but since $|\alpha| + |\beta| \leq k$ we have $| J(\alpha)| +  |J(\beta)| \leq k - 2$ and thus
    \[|J(\hat \alpha)| + |J(\hat \beta)| \geq k>k-1\] which implies $J(\hat \alpha) \cap J(\hat \beta) \not= \emptyset$; whence $q_\beta p_\alpha = 0$ by \cite[Lemma 1]{SOLOMON1968220}.

    Finally, directness of the sums
    \[\sum_{\alpha \in \comp(k,\leq m)} \KK[S_k]  q_{\hat \alpha} p_\alpha \quad \text{and} \quad \sum_{\alpha \in \comp(k,\leq k - m)} \KK[S_k]  p_{\hat \alpha} q_\alpha\]
    is shown in \cite[Lemma 1, 2 and 3]{SOLOMON1968220}. To be precise, Solomon proves directness of the latter sum for all $m$, which implies directness of the former sum by applying the sign-twist automorphism $\KK[S_k] \to \KK[S_k], \ \tau \mapsto \mathrm{sgn}(\tau)\tau$.

    We can now conclude by a dimension count: there are isomorphisms
    \[\KK[S_k]q_{\hat \alpha} p_{\alpha} \cong \KK[S_k]p_{\alpha} q_{\hat \alpha}\]  by \cite[Lemma 12]{SOLOMON1968220} and since $\alpha \in \comp(k,\leq m)$ if and only if $\hat \alpha \in \comp(k,>k-m)$, they identify the direct sum in the statement with the one that is proven to be a decomposition in \cite[Theorem 2]{SOLOMON1968220}.
\end{proof}

\begin{example}\label{ex:id-decomp-3}
    Let us decompose $1 = \sum_{\alpha \in \comp(3,\leq 3)} e_\alpha \in \KK[S_3]$ with $e_\alpha \in \KK[S_3] q_{\hat \alpha} p_\alpha$, according to \Cref{thm:grp-alg-decomp}. We have
    \begin{align*}
        q_{(1,1,1)} p_{(3)} &= \frac{1}{6}(1 + (12) + (23) + (13) + (123) + (321)) \\
        q_{(1,2)} p_{(2,1)} &= \frac{1}{4} (1 - (23)) (1 + (12)) = \frac{1}{4} (1 + (12) - (23) - (132)) \\
        q_{(2,1)} p_{(1,2)} &= \frac{1}{4} (1 - (12)) (1 + (23)) = \frac{1}{4} (1 - (12) + (23) - (123)) \\
        q_{(3)} p_{(1,1,1)} &= \frac{1}{6}(1 - (12) - (23) - (13) + (123) + (321))
    \end{align*}
    and thus we obtain $e_{(3)} = q_{(1,1,1)} p_{(3)}$, $e_{(1,1,1)} = q_{(3)} p_{(1,1,1)}$, $e_{(2,1)} = \frac{4}{3} q_{(1,2)}p_{(2,1)}$ and $e_{(1,2)} = \frac{4}{3} q_{(2,1)}p_{(1,2)}$. Note that since the $e_\alpha$ decompose $1$, they are necessarily orthogonal idempotents: $e_\alpha = e_\alpha \cdot 1 = \sum_\beta e_\alpha e_\beta$ and as the sum $\sum_\alpha \KK[S_3] q_{\hat \alpha} p_\alpha$ is direct this shows $e_\alpha e_\beta = \delta_{\alpha\beta} e_\alpha$.
\end{example}

We also have an analogue of \Cref{prop:decomp-partial-sym}:

\begin{proposition}\label{prop:left-ideal-decomp}
    Let $\alpha$ be a composition of $k$. Then
    \[\KK[S_k]p_\alpha = \bigoplus_{\beta \leq \alpha} \KK[S_k]q_{\hat \beta}p_\beta \quad \text{and} \quad \KK[S_k]q_\alpha = \bigoplus_{\beta \leq \alpha} \KK[S_k]p_{\hat \beta}q_\beta.\]
\end{proposition}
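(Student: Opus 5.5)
The plan is to prove the first identity by three steps: containment of the left ideals, directness of their sum, and a dimension count. The second identity then follows from the first by the sign-twist automorphism $\tau\mapsto\mathrm{sgn}(\tau)\tau$. This automorphism exchanges $p_\gamma$ and $q_\gamma$ for every composition $\gamma$, so it maps $\KK[S_k]p_\alpha$ onto $\KK[S_k]q_\alpha$ and $\KK[S_k]q_{\hat\beta}p_\beta$ onto $\KK[S_k]p_{\hat\beta}q_\beta$.

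\emph{Containment.} Let $\beta\leq\alpha$, so that $\beta$ is coarser than $\alpha$. Then $J(\beta)\subseteq J(\alpha)$, hence $\hat J(\alpha)\subseteq\hat J(\beta)$, and therefore $S_\alpha\subseteq S_\beta$. Since $p_\beta$ is the averaging idempotent of $S_\beta$, it absorbs elements of the subgroup $S_\alpha$: $p_\beta\tau=p_\beta$ for all $\tau\in S_\alpha$, so $p_\beta p_\alpha=p_\beta$. Consequently
\[q_{\hat\beta}p_\beta = q_{\hat\beta}p_\beta p_\alpha\in\KK[S_k]p_\alpha,\]
and so $\sum_{\beta\leq\alpha}\KK[S_k]q_{\hat\beta}p_\beta\subseteq\KK[S_k]p_\alpha$.

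\emph{Directness.} Apply \Cref{thm:grp-alg-decomp} with $m=k$. Since $\comp(k,\leq 0)=\emptyset$, the second family of ideals is empty, and the theorem gives
\[\KK[S_k]=\bigoplus_{\beta\in\comp(k)}\KK[S_k]q_{\hat\beta}p_\beta .\]
In particular, any subfamily of these ideals, such as the one indexed by $\beta\leq\alpha$, has a direct sum.

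\emph{Dimension count.} It remains to show
\[\sum_{\beta\leq\alpha}\dim\KK[S_k]q_{\hat\beta}p_\beta=\dim\KK[S_k]p_\alpha=\frac{k!}{\alpha_1!\cdots\alpha_\ell!},\]
where the last equality holds because $\KK[S_k]p_\alpha$ has a basis indexed by the cosets of $S_\alpha$.

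By \cite[Lemma 12]{SOLOMON1968220} we have $\KK[S_k]q_{\hat\beta}p_\beta\cong\KK[S_k]p_\beta q_{\hat\beta}$. Solomon's proof of \cite[Theorem 2]{SOLOMON1968220} computes the dimension of the latter ideal as the number of permutations whose descent set is exactly $J(\beta)$, for a suitable descent convention. Summing over all $\beta$ with $J(\beta)\subseteq J(\alpha)$ then counts the permutations whose descent set is contained in $J(\alpha)$. These are the permutations that are increasing on each block of $\alpha$, and there are exactly $k!/(\alpha_1!\cdots\alpha_\ell!)$ of them.

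I expect the main obstacle to be this last step: getting the dimension formula and its descent-set convention exactly right. The convention may turn out to be the complement or the inverse of what I stated, and the pairing $q_{\hat\beta}p_\beta$ versus $p_\beta q_{\hat\beta}$ may be swapped.

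If citing Solomon becomes delicate, there is a self-contained alternative via \Cref{lmm:sym-bases}. Take $d=k$ and restrict to the multilinear part of $(\KK^k)^{\tensor k}$, which is the span of $e_w$ for permutations $w$. This part is isomorphic to the regular representation via $x\mapsto e_{12\cdots k}.x$. Under this isomorphism, $\KK[S_k]p_\alpha$ corresponds to the multilinear part of $\Sym^\alpha(\KK^k)$. By \Cref{lmm:sym-bases}, that part has basis $\rho_k(e_w)$ for the permutations $w$ with $D(w)\subseteq J(\alpha)$.

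Applying the same identification to the total decomposition $\KK[S_k]=\bigoplus_{\beta}\KK[S_k]q_{\hat\beta}p_\beta$ gives a second route. A Möbius-inversion argument over the boolean lattice $\comp(k)$ would then show that $\dim\KK[S_k]q_{\hat\beta}p_\beta$ equals the number of permutations with descent set exactly $J(\beta)$, and the dimension count closes as above.
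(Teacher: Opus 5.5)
Your proof is correct, and its first two steps are the paper's: containment via $p_\beta p_\alpha=p_\beta$, and directness from \Cref{thm:grp-alg-decomp}. Your explicit choice $m=k$ and the sign-twist derivation of the second identity are cleaner than the paper's ``analogous''.

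The routes differ only in the dimension count. The paper never computes individual dimensions. Solomon's Lemma~4 gives $\KK[S_k]p_\alpha=\bigoplus_{\beta\leq\alpha}\KK[S_k]p_\beta q_{\hat\beta}p_\alpha$, the proof of his Lemma~5 gives $\KK[S_k]p_\beta q_{\hat\beta}p_\alpha\cong\KK[S_k]p_\beta q_{\hat\beta}$, and Lemma~12 then matches these summands one by one with the ideals $\KK[S_k]q_{\hat\beta}p_\beta$. You instead evaluate both sides, $[S_k:S_\alpha]$ and descent-class numbers, and close with the count of permutations increasing on the blocks of $\alpha$.

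Since $\sum_{J(\beta)\subseteq J(\alpha)}\#\{\pi : D(\pi)=J(\beta)\}=k!/(\alpha_1!\cdots\alpha_\ell!)$ for every $\alpha$, M\"obius inversion over $\comp(k)$ shows that the per-summand formula you cite is equivalent to the summed identity the paper extracts from Lemmas~4 and~5. So both routes rest on the same input from Solomon: yours makes the individual dimensions explicit, while the paper's is shorter and never meets descent conventions.

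Your worry about those conventions is unfounded. Replacing each value $v$ of $\pi$ by $k+1-v$ turns descents into ascents, and $\pi\mapsto\pi^{-1}$ exchanges left and right descents. Hence the number of permutations with descent set $J$ equals the number with descent set $[k-1]\setminus J$, and also the number whose inverse has descent set $J$. Every version sums over $J\subseteq J(\alpha)$ to the same multinomial coefficient.

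Your fallback, however, is circular as written. Identifying the multilinear part of $(\KK^k)^{\tensor k}$ with $\KK[S_k]$ and applying this only to the total decomposition yields the single equation $\sum_{\beta}\dim\KK[S_k]q_{\hat\beta}p_\beta=k!$. The M\"obius inversion you describe needs $\dim\KK[S_k]p_\alpha=\sum_{\beta\leq\alpha}\dim\KK[S_k]q_{\hat\beta}p_\beta$ for every $\alpha$, which is the statement being proved.

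It becomes a genuine self-contained proof if you take $d\geq k$ and intersect \Cref{prop:decomp-partial-sym} with the multilinear weight space. Then use $V_\beta(\KK^d)=\im(R(p_\beta)\circ R(q_{\hat\beta}))$ from \Cref{prop:descent-alt}, whose proof does not rely on this proposition. This transports $\Sym^\alpha(\KK^d)=\bigoplus_{\beta\leq\alpha}V_\beta(\KK^d)$ directly to $\KK[S_k]p_\alpha=\bigoplus_{\beta\leq\alpha}\KK[S_k]q_{\hat\beta}p_\beta$.
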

\begin{proof}
    We prove the first statement, the second is analogous.

    First, note that for $\beta \leq \alpha$ we have $\KK[S_k]q_{\hat \beta}p_\beta p_\alpha = \KK[S_k]q_{\hat \beta} p_\beta$ and thus $\KK[S_k]q_{\hat \beta} p_\beta \subseteq \KK[S_k]p_\alpha$. By \Cref{thm:grp-alg-decomp}, the inner sum is direct. By \cite[Lemma 4]{SOLOMON1968220}, we have a decomposition
    \[\KK[S_k]p_\alpha = \bigoplus_{\beta \leq \alpha} \KK[S_k]p_{\beta}q_{\hat \beta} p_\alpha\]
    and in the proof of \cite[Lemma 5]{SOLOMON1968220} it is shown that $\KK[S_k]p_{\beta}q_{\hat \beta} p_\alpha \cong \KK[S_k]p_{\beta}q_{\hat \beta}$. Finally, by \cite[Lemma 12]{SOLOMON1968220} $\KK[S_k]p_{\beta}q_{\hat \beta} \cong \KK[S_k]q_{\hat \beta}p_{\beta}$. Thus, the claim follows by a dimension count.
\end{proof}

We can view $(\KK^d)^{\tensor k}$ as a right $\KK[S_k]$ module via $R: \KK[S_k] \to \mathrm{End}^{\mathrm{op}}((\KK^d)^{\tensor k})$ corresponding to the right action. Then from \Cref{thm:grp-alg-decomp} we obtain an isomorphism of $\GL(\KK^d)$-representations
\begin{equation}\label{eq:tensor-id}
    \begin{split}
    (\mathbb K^d)^{\tensor k} \simeq (\mathbb K^d)^{\tensor k} \tensor_{\KK[S_k]} \KK[S_k] \simeq \bigoplus_{\alpha \in \comp(k,\leq m)} (\mathbb K^d)^{\tensor k} \tensor_{\KK[S_k]}  \KK[S_k]q_{\hat \alpha} p_{\alpha} \\
    \oplus \bigoplus_{\alpha \in \comp(k,\leq k-m)} (\mathbb K^d)^{\tensor k} \tensor_{\KK[S_k]}  \KK[S_k]p_{\hat \alpha} q_{\alpha} 
    \end{split}
\end{equation}

It remains to show that this is precisely the decomposition in \Cref{cor:refine-decomp}:

\begin{proposition}\label{prop:descent-alt}
    Under the identification \eqref{eq:tensor-id}, we have
    \[V_\alpha(\KK^d) \simeq (\mathbb K^d)^{\tensor k} \tensor_{\KK[S_k]}  \KK[S_k]q_{\hat \alpha} p_{\alpha}\]
    and
    \[W_\alpha(\KK^d) \simeq (\mathbb K^d)^{\tensor k} \tensor_{\KK[S_k]}  \KK[S_k]p_{\hat \alpha} q_{\alpha}.\]
\end{proposition}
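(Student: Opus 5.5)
The identification \eqref{eq:tensor-id} sends $(\KK^d)^{\tensor k}\tensor_{\KK[S_k]}\KK[S_k]e$, for a right ideal generator, to the image $R(e)\bigl((\KK^d)^{\tensor k}\bigr)$. More precisely, $(\KK^d)^{\tensor k}\tensor_{\KK[S_k]}\KK[S_k]a \cong (\KK^d)^{\tensor k}.a$ for any $a \in \KK[S_k]$, since $\KK[S_k]a$ is a direct summand of $\KK[S_k]$ and tensoring is exact over the semisimple algebra. So the plan is to prove the two equalities of subspaces
\[ V_\alpha(\KK^d) = (\KK^d)^{\tensor k}.\,q_{\hat\alpha}p_\alpha, \qquad W_\alpha(\KK^d) = (\KK^d)^{\tensor k}.\,p_{\hat\alpha}q_\alpha. \]
We treat the first equality; the second follows by the same argument with $p$ and $q$ swapped.

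\textbf{Step 1: the image lies in $V_\alpha$.} Set $U_\alpha := (\KK^d)^{\tensor k}.q_{\hat\alpha}p_\alpha$. Since $p_\alpha$ is the projector onto $\Sym^\alpha(\KK^d)$, we have $U_\alpha \subseteq \Sym^\alpha(\KK^d)$. For $\beta<\alpha$ we must show $U_\alpha.p_\beta = 0$, that is, $q_{\hat\alpha}p_\alpha p_\beta = 0$. Because $\beta<\alpha$ means $S_\beta \supseteq S_\alpha$, we get $p_\alpha p_\beta = p_\beta$, so it suffices that $q_{\hat\alpha}p_\beta=0$. Now $\beta<\alpha$ means $J(\beta)\subsetneq J(\alpha)$, so $\hat J(\beta)$ strictly contains $\hat J(\alpha)$. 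Hence there is an index $i\in J(\alpha)\cap \hat J(\beta)$, and $s_i$ lies both in $S_{\hat\alpha}$ (whose generators are indexed by $\hat J(\hat\alpha)=J(\alpha)$) and in $S_\beta$. Factoring out $\tfrac12(1-s_i)$ on the right of $q_{\hat\alpha}$ and $\tfrac12(1+s_i)$ on the left of $p_\beta$ gives $q_{\hat\alpha}p_\beta=0$; this is exactly \cite[Lemma 1]{SOLOMON1968220}. Therefore $U_\alpha\subseteq V_\alpha(\KK^d)$.

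\textbf{Step 2: equality by a dimension count.} By \Cref{cor:refine-decomp} with $m=k$, the $V_\alpha$ decompose the whole tensor space (this is \Cref{cor:tot-decomp}). By \Cref{thm:grp-alg-decomp} with $m=k$ together with \eqref{eq:tensor-id}, the images $U_\alpha$, viewed abstractly, have dimensions summing to $d^k$. We still need the abstract summand $(\KK^d)^{\tensor k}\tensor\KK[S_k]q_{\hat\alpha}p_\alpha$ to have the dimension of its image $U_\alpha$, which is the exactness remark above. Then $\sum_\alpha \dim U_\alpha = d^k = \sum_\alpha \dim V_\alpha$ and $U_\alpha\subseteq V_\alpha$ for every $\alpha$, so $U_\alpha=V_\alpha$. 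The statement for general $m$ follows because the summands do not depend on $m$.

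\textbf{Main obstacle.} The delicate point is checking that the isomorphism \eqref{eq:tensor-id} really restricts to the inclusion of the images $U_\alpha$, i.e.\ that the internal sum of the $U_\alpha$ inside $(\KK^d)^{\tensor k}$ is the transported decomposition and is direct. To settle this, I would write $1=\sum_\alpha e_\alpha$ with $e_\alpha\in\KK[S_k]q_{\hat\alpha}p_\alpha$ orthogonal idempotents, as in \Cref{ex:id-decomp-3}. Then $x\mapsto x.e_\alpha$ gives projections whose images are exactly $U_\alpha$, since $\KK[S_k]e_\alpha=\KK[S_k]q_{\hat\alpha}p_\alpha$. This makes directness and the dimension equality immediate.
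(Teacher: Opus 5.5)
Your proposal is correct and takes the same route as the paper. You show $(\KK^d)^{\tensor k}.q_{\hat\alpha}p_\alpha\subseteq V_\alpha(\KK^d)$ using $p_\alpha p_\beta=p_\beta$ and \cite[Lemma 1]{SOLOMON1968220} for an index in $J(\alpha)\cap\hat J(\beta)$, then conclude by comparing dimensions via \Cref{cor:tot-decomp} and \Cref{thm:grp-alg-decomp}; your extra argument that $(\KK^d)^{\tensor k}\tensor_{\KK[S_k]}\KK[S_k]a\cong(\KK^d)^{\tensor k}.a$, via idempotents, just makes explicit the identification of summands with images that the paper takes for granted.
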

\begin{proof}
    The isomorphism \eqref{eq:tensor-id} identifies $(\mathbb K^d)^{\tensor k} \tensor_{\KK[S_k]}  \KK[S_k]p_{\hat \alpha} q_{\alpha}$ with the subspace $U$ of $(\mathbb K^d)^{\tensor k}$ spanned by tensors $e_w.p_{\hat \alpha}.q_{ \alpha}$. By definition, this is a subspace of $\AntiSym^\alpha(\KK^d)$. Moreover, for $\beta < \alpha$, we have $R(q_\beta)(e_w.p_{\hat \alpha}.q_{ \alpha}) = e_w.(p_{\hat \alpha} q_{ \alpha} q_\beta)$. But since $\beta < \alpha$, $q_\alpha q_\beta = q_\beta q_\alpha$, and since in particular $J( \alpha) \cap \hat J(\beta) \not=\emptyset$, $p_{\hat \alpha} q_\beta = 0$ by \cite[Lemma 1]{SOLOMON1968220}. Thus $W_\alpha(\KK^d) \supseteq U \simeq (\mathbb K^d)^{\tensor k} \tensor_{\KK[S_k]}  \KK[S_k]p_{\hat \alpha} q_{\alpha}$. The analogous argument with $p$ and $q$ reversed shows that we have an inclusion $V_\alpha(\KK^d) \supseteq (\mathbb K^d)^{\tensor k} \tensor_{\KK[S_k]}  \KK[S_k]q_{\hat \alpha} p_{\alpha}$. Now the claim follows by dimension count since by \Cref{cor:refine-decomp} and \Cref{thm:grp-alg-decomp} both vector spaces decompose $(\KK^d)^{\otimes k}$ when summed over all $\alpha$.
\end{proof}

In particular, $V_\alpha(\KK^d) = \im( R(p_{\alpha}) \circ R(q_{\hat \alpha}))$ and $W_\alpha(\KK^d) = \im (R(q_{\alpha}) \circ R(p_{\hat \alpha}))$. These are analogues of Young symmetrizers: indeed, compositions of $k$ are in bijection with skew Young diagrams with $k$ boxes which do not contain a $2\times 2$ square. Such diagrams are also called \textit{zig-zag diagrams} or \textit{ribbons}. The skew Young diagram $Y(\alpha)$ associated to a composition $\alpha$ of length $\ell$ has $\alpha_i$ boxes in row $\ell + 1 - i$, such that the last box in row $i$ is below the first box in row $i-1$. For example, the composition $(2,3,1)$ corresponds to the diagram
\[\ydiagram{3+1,1+3,2}\]
If we flip $Y(\alpha)$ vertically, then $R(p_{\alpha}) \circ R(q_{\hat \alpha})$ is the analogue of a Young symmetrizer for the unique standard tableau for this flipped diagram, as Solomon already explained in the original article \cite{SOLOMON1968220}. 

\begin{example}
    Consider $k=4$. The partition $(2,2)$ corresponds to the following diagram and unique tableau:
    \[\ydiagram{2,1+2} \qquad \ytableaushort{12,\none 34}\]
    Thus we have to antisymmetrize on positions $2$ and $3$, followed by symmetrizing positions $1,2$ and $3,4$. This will yield a surjection onto $V_{(2,2)}(\KK^d)$.\par
    Note however that these surjections are not necessarily the orthogonal idempotents realizing the decomposition.
\end{example}

Combining \Cref{prop:descent-alt} with \Cref{thm:grp-alg-decomp}, we see that we obtain the projectors to the spaces $V_\alpha(\KK^d)$ and $W_\alpha(\KK^d)$ by decomposing $1 \in \KK[S_k]$ according to the decomposition from \Cref{thm:grp-alg-decomp}. This is a linear algebra problem. By \Cref{cor:refine-decomp} we then obtain the projectors to $\Pws^k_m(\KK^d)$ and $\Pwa^k_{k-m}(\KK^d)$ by taking the appropriate sums.

\begin{example}
    From \Cref{ex:id-decomp-3} we obtain the obvious projectors $R(q_{(1,1,1)}p_{(3)}) = R(p_{(3)}) $ to $\Pws^3_1(\KK^d)=\Sym^3(\KK^d)$ and \[R\Big(q_{(1,1,1)}p_{(3)} + \frac{4}{3} (q_{(1,2)}p_{(2,1)} + q_{(2,1)}p_{(1,2)})\Big) = \mathrm{id} - R(q_{(3)})\] to $\Pws^3_2(\KK^d)$. The first interesting examples are the projectors $P$ to $\Pws^4_2(\KK^d)$ and $Q$ to $\Pwa^4_2(\KK^d)$. Note that $Q = \mathrm{id} - P$. We compute
    \begin{scriptsize}
    \begin{align*}
        P &= R\Big(\frac{1}{2} \cdot 1 + \frac{1}{4}(34) + \frac{1}{12}(23) + \frac{1}{24}(234) - \frac{1}{24}(243) + \frac{1}{6}(24) + \frac{1}{4}(12) - \frac{1}{24}(123) - \frac{1}{12}(1234) - \frac{1}{6}(1243) \\
        &\hphantom{:=} + \frac{1}{24}(124) + \frac{1}{24}(132) - \frac{1}{6}(1342) + \frac{1}{6}(13) - \frac{1}{24}(134) - \frac{1}{12}(1432)
        - \frac{1}{24}(142) + \frac{1}{24}(143) + \frac{1}{12}(14)\Big).
    \end{align*}
    \end{scriptsize}
\end{example}

Finally, we want to understand how the representation $\KK[S_k] q_{\hat \alpha} p_{\alpha}\cong \KK[S_k]p_{\alpha}q_{\hat\alpha}$ decomposes into irreducible representations. For this, we use the following fact:
\begin{lemma}\label{lem:desc-rep}
    $\KK[S_k] q_{\hat \alpha} p_{\alpha}\cong \KK[S_k]p_{\alpha}q_{\hat\alpha}$ is the $S_k$-representation associated to the skew Young diagram $Y(\alpha)$.
\end{lemma}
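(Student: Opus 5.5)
Since $\KK[S_k]q_{\hat\alpha}p_\alpha\cong\KK[S_k]p_\alpha q_{\hat\alpha}$ is already known (\cite[Lemma 12]{SOLOMON1968220}), it suffices to identify the character of $M_\alpha:=\KK[S_k]p_\alpha q_{\hat\alpha}$ with the skew character $\chi^{Y(\alpha)}$, whose Frobenius characteristic is the ribbon Schur function $r_\alpha$. I would do this by Möbius inversion over the boolean lattice $\comp(k)$. By \Cref{prop:left-ideal-decomp} (in the form with $p_\beta q_{\hat\beta}$, i.e.\ \cite[Lemma 4,5]{SOLOMON1968220} as used in its proof), for every $\alpha$ we have an isomorphism of left $S_k$-modules
\[\KK[S_k]p_\alpha \;\cong\; \bigoplus_{\beta\le\alpha} M_\beta .\]
The left module $\KK[S_k]p_\alpha$ is the permutation module $\mathrm{Ind}_{S_\alpha}^{S_k}\mathbf 1$, with Frobenius characteristic $h_\alpha=h_{\alpha_1}\cdots h_{\alpha_\ell}$. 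Hence $\mathrm{ch}(M_\alpha)=\sum_{\beta\le\alpha}(-1)^{|\alpha|-|\beta|}h_\beta$ by Möbius inversion in $2^{[k-1]}$ (via $J$).

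The second step is the classical identity $r_\alpha=\sum_{\beta\le\alpha}(-1)^{|\alpha|-|\beta|}h_\beta$, where $\beta\le\alpha$ means $\beta$ is a coarsening of $\alpha$. I would prove it from the Jacobi--Trudi determinant for the skew shape $Y(\alpha)$: for a ribbon the matrix has $h_{\alpha_i+\dots+\alpha_j}$ on and above the diagonal structure, $1$ on the subdiagonal and $0$ below. Expanding this Hessenberg-type determinant yields exactly a sum over coarsenings with the stated signs. Equivalently, since $h_\alpha=\sum_{\beta\le\alpha}r_\beta$ holds combinatorially (a semistandard filling of the disconnected rows of $\alpha$, read as one row word, has its descents at a subset of $J(\alpha)$, giving a ribbon of shape a coarsening of $\alpha$), Möbius inversion gives the same identity. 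Comparing both steps yields $\mathrm{ch}(M_\alpha)=r_\alpha=s_{Y(\alpha)}$. Characters determine representations in characteristic $0$, which proves the lemma.

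The main obstacle is bookkeeping of conventions. The paper's $Y(\alpha)$ places $\alpha_i$ in row $\ell+1-i$ and its order $\le$ means "refined by". I need the ribbon whose row lengths read in the right direction to match the descent set $J(\alpha)$, and I must verify that the isomorphism from \Cref{prop:left-ideal-decomp} is really of left modules; it is, since all ideals involved are left ideals. Flipping a ribbon by $180^\circ$ does not change its skew Schur function, but reflecting it vertically does, so I would check the case $k=3$ against \Cref{ex:id-decomp-3}. There $\alpha=(2,1)$ must give the $2$-dimensional standard representation, and $r_{(2,1)}=h_2h_1-h_3=s_{(2,1)}$ confirms this.
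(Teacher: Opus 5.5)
Your proposal is correct and follows the paper's proof. Both arguments use \Cref{prop:left-ideal-decomp} and M\"obius inversion on $\comp(k)$ to write the character as $\sum_{\beta\le\alpha}(-1)^{|\alpha|-|\beta|}h_\beta$ (the paper phrases this with the characters $\phi_\beta$ of $\KK[S_k]p_\beta$); the only difference is that you prove this alternating sum equals $s_{Y(\alpha)}$, via Jacobi--Trudi or the descent expansion $h_\alpha=\sum_{\beta\le\alpha}r_\beta$, whereas the paper cites Gessel, and your worry about orientation conventions is harmless because the sum is invariant under reversing $\alpha$.
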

\begin{proof}
    It follows from \Cref{prop:left-ideal-decomp} and Moebius inversion that the character of $\KK[S_k] q_{\hat \alpha} p_{\alpha}$ (which is also the character of $\KK[S_k]p_{ \alpha}q_{\hat\alpha} $ by \cite[Lemma 12]{SOLOMON1968220}) is given by
\[\psi_{\alpha} = \sum_{\beta \leq \alpha} (-1)^{|\beta|-|\alpha|}\phi_\beta\]
where $\phi_\beta$ is the character of $\KK[S_k]p_\beta$. In \cite[Section 2]{gessel1984multipartite} it is shown that $\psi_\alpha$ is identified under the Frobenius characteristic map with the skew Schur function associated to $Y(\alpha)$, given by the sum
\[s_{Y(\alpha)} := \sum_{T \in \mathrm{SSYT}(Y(\alpha))} x^T \in \KK[x_1,\dots,x_k]\]
over semi-standard Young tableaux of shape $Y(\alpha)$ with entries in $\{1,\ldots,k\}$.
\end{proof}

The representations associated to ribbon Young diagrams are known as \textit{ribbon} or \textit{descent representations}. We refer to \cite[Section 2]{moustakas2023descent} for an overview. Also see \cite{billera2006decomposable} for more details about the ribbon Schur functions $s_{Y(\alpha)}$ and \cite{Gelfand1995NCSym} for a discussion of their non-commutative analogue.

By \Cref{lem:desc-rep}, decomposing $\KK[S_k]q_{\hat \alpha}p_\alpha$ into irreducible representations thus amounts to writing a skew Schur function $s_{\lambda/\mu}$ as a sum of standard Schur functions $s_\nu$, which is a classical and well-understood task (see e.g.\ \cite[Exercise 6.19 iv]{Fulton1991RepresentationTA}). In our case, where $\lambda/\mu = Y(\alpha)$ for a composition $\alpha$, there is a particularly simple description via \textit{descent compositions}:

\begin{definition}
    Let $\lambda$ be a partition of $k$. Given a Standard Young Tableau (SYT) $T$ of shape $\lambda$, filled with numbers $1,\ldots,k$, its \textit{descent set} $D(T)$ is the set of $i$ such that $i+1$ is in a lower row than $i$ in $T$. This is a subset of $[k-1]$ and thus naturally corresponds to a composition $\alpha_T$ of $k$, the \textit{descent composition} of $T$.
\end{definition}
For example,
\[\ytableaushort{13,24} \quad \text{and} \quad \ytableaushort{12,34}\]
have descent sets $\{1,3\}$ and $\{2\}$, respectively. They correspond to compositions $(1,2,1)$ and $(2,2)$.

\begin{theorem}[{\cite[Theorem 7]{gessel1984multipartite}}]\label{thm:irrep-dec-char}
    We have
    \[s_{Y(\alpha)}= \sum_{\lambda \vdash k} c_\lambda(\alpha) s_\lambda \]
    where $c_\lambda(\alpha)$ is the number of standard Young tableaux $T$ of shape $\lambda$ with $\alpha_T = \alpha$.
\end{theorem}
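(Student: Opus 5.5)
The plan is to prove the expansion of the ribbon Schur function $s_{Y(\alpha)}$ through Gessel's fundamental quasisymmetric functions. The identity will then reduce to a statement about descent compositions of standard Young tableaux.

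For a composition $\beta$ of $k$, let $F_\beta$ be the fundamental quasisymmetric function. It is the sum of the monomials $x_{i_1}\cdots x_{i_k}$ over $i_1\le\dots\le i_k$ with strict inequality $i_j<i_{j+1}$ at every $j\in J(\beta)$. These $F_\beta$ are linearly independent, which I will use at the end to compare coefficients. The first step is the classical standardization argument: for any skew shape $\lambda/\mu$,
\[s_{\lambda/\mu}=\sum_{T\in\mathrm{SYT}(\lambda/\mu)}F_{\alpha_T}.\]
To prove it, send a semistandard tableau to its standardization. Equal entries lie in horizontal strips, and we relabel them left to right. This gives a bijection between SSYT of shape $\lambda/\mu$ and pairs $(T,(i_1\le\dots\le i_k))$, where $T$ is a standard tableau and the sequence must increase strictly at the descents of $T$. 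The reason strictness is forced exactly there is that equal letters $j,j+1$ must appear in left-to-right order, and this is possible precisely when $j+1$ is not in a lower row than $j$.

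The second step applies this to the ribbon $Y(\alpha)$. A standard filling of a ribbon is determined by reading the boxes along the ribbon, i.e.\ by a permutation. The row and column strictness conditions say that this permutation has its descent set equal to $J(\alpha)$, up to a fixed convention. Carefully matching orientations (rows read as $\alpha_i$ boxes, with the flip convention of $Y(\alpha)$) should give
\[s_{Y(\alpha)}=\sum_{\pi\in S_k:\ D(\pi^{-1})=J(\alpha)}F_{\mathrm{co}(D(\pi))}.\]
Expanding each $s_\lambda=\sum_{T\in \mathrm{SYT}(\lambda)}F_{\alpha_T}$ on the other side, the claim then becomes an identity of multisets of descent compositions.

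The third step proves that identity with RSK. RSK is a bijection $\pi\mapsto(P,Q)$ onto pairs of standard tableaux of the same shape $\lambda$, with $D(\pi)=D(Q)$ and $D(\pi^{-1})=D(P)$. The condition $D(\pi^{-1})=J(\alpha)$ is then the condition $\alpha_P=\alpha$, and $F_{\mathrm{co}(D(\pi))}=F_{\alpha_Q}$. Summing over $Q$ for fixed $P$ gives $s_\lambda$, so
\[s_{Y(\alpha)}=\sum_\lambda \#\{P\in\mathrm{SYT}(\lambda):\alpha_P=\alpha\}\, s_\lambda.\]

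I expect the main obstacle to be bookkeeping of conventions rather than ideas. The paper defines $Y(\alpha)$ via a vertical flip and reads tableau descents by rows, so I must check that $D(\pi^{-1})$, not its complement or reverse, corresponds to $J(\alpha)$. If a mismatch appears, I will correct it using the symmetry of ribbon Schur functions under reversing $\alpha$ (rotation by $180^\circ$) together with the fact that $\alpha_P\mapsto$ reversed composition under evacuation preserves shape counts.
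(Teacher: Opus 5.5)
The paper gives no proof of this statement; it simply quotes Gessel. Your self-contained argument is correct.

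\textbf{The convention check in your second step.} The orientation check you deferred works out exactly as you wrote it, so the evacuation fallback is not needed. Read the boxes of $Y(\alpha)$ from the bottom row (the one with $\alpha_1$ boxes) upwards, each row left to right. Let $\sigma$ be the resulting word of entries of a standard filling $T$.
\begin{itemize}
\item Inside a row, $\sigma$ ascends.
\item For $j\in J(\alpha)$, the box read at position $j+1$ sits directly above the box read at position $j$.
\end{itemize}
So standard fillings correspond exactly to permutations $\sigma$ with $D(\sigma)=J(\alpha)$. Later blocks lie in higher rows, and two entries in the same row never form an inverse descent. Hence $i+1$ lies in a lower row than $i$ if and only if $\sigma^{-1}(i+1)<\sigma^{-1}(i)$. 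That is, $D(T)=D(\sigma^{-1})$, and your $\pi$ is $\sigma^{-1}$.

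The RSK facts $D(Q)=D(\pi)$ and $D(P)=D(\pi^{-1})$ hold with precisely the paper's convention ($i+1$ in a strictly lower row). Your final display follows by direct resummation, so you never actually use the linear independence of the $F_\beta$.

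\textbf{Comparison with a shorter route.} Staying closer to the paper's own notation, there is a route that needs neither RSK nor skew standardization.
\begin{itemize}
\item A semistandard filling of $Y(\gamma)$, read from the bottom row up, is exactly a word $w$ with descent composition $\delta(w)=\gamma$. Hence $s_{Y(\gamma)}=\sum_{\delta(w)=\gamma}x^w$.
\item Similarly $h_\beta=\sum_{\delta(w)\le\beta}x^w=\sum_{\gamma\le\beta}s_{Y(\gamma)}$. M\"obius inversion then gives $s_{Y(\alpha)}=\sum_{\beta\le\alpha}(-1)^{|\alpha|-|\beta|}h_\beta$, which is the Frobenius image of the formula for $\psi_\alpha$ in the proof of \Cref{lem:desc-rep}.
\item Pairing with $s_\lambda$ produces the Kostka numbers $K_{\lambda\beta}$.
\item Standardization for the straight shape $\lambda$ gives $K_{\lambda\beta}=\#\{T\in\mathrm{SYT}(\lambda): D(T)\subseteq J(\beta)\}$.
\item M\"obius inversion on the Boolean lattice $\comp(k)$ then yields $\langle s_\lambda,s_{Y(\alpha)}\rangle=c_\lambda(\alpha)$.
\end{itemize}
This is more economical. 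Essentially the same computation, with the monomial-to-fundamental change of basis in place of standardization, shows that the coefficient of $F_\alpha$ in any symmetric function $f$ equals $\langle f,s_{Y(\alpha)}\rangle$.

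Your route costs the RSK descent lemma and the skew version of standardization. In exchange it is bijective: RSK restricts to an explicit bijection between standard fillings of $Y(\alpha)$ and pairs $(P,Q)$ of equal shape with $\alpha_P=\alpha$, which refines the numerical identity.
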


\begin{corollary}\label{cor:irred-decomp}
    We have
    \[\KK[S_k]q_{\hat \alpha}p_\alpha \cong \KK[S_k]p_\alpha q_{\hat \alpha} \cong \bigoplus_{\lambda \vdash k} (\mathbb V^\lambda)^{\oplus c_\lambda(\alpha)} \]
    where $\mathbb V^\lambda$ denotes the irreducible $S_k$-representation associated to $\lambda$,
    and consequently
    \[V_\alpha(\KK^d) \cong W_{\hat \alpha}(\KK^d) \cong \bigoplus_{\lambda \vdash k} \mathbb S^\lambda(\KK^d)^{\oplus c_\lambda(\alpha)} \]
    where $\mathbb S^\lambda$ denotes the Schur functor associated to $\lambda$.
\end{corollary}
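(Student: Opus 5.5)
The first chain of isomorphisms is purely a statement about $S_k$-representations, and I would get it by combining \Cref{lem:desc-rep} with \Cref{thm:irrep-dec-char}. By \Cref{lem:desc-rep}, the character $\psi_\alpha$ of $\KK[S_k]q_{\hat\alpha}p_\alpha$ maps to $s_{Y(\alpha)}$ under the Frobenius characteristic map; that lemma, via \cite[Lemma 12]{SOLOMON1968220}, also identifies this module with $\KK[S_k]p_\alpha q_{\hat\alpha}$. \Cref{thm:irrep-dec-char} rewrites $s_{Y(\alpha)}$ as $\sum_\lambda c_\lambda(\alpha)s_\lambda$, and the Frobenius characteristic sends the character of $\mathbb V^\lambda$ to $s_\lambda$. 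The Frobenius characteristic is an isometric isomorphism from class functions onto $\Lambda^k$, and $\KK$ has characteristic zero, so $S_k$-representations are semisimple and determined by their characters. Hence $\psi_\alpha=\sum_\lambda c_\lambda(\alpha)\chi^\lambda$ gives the claimed decomposition. The $\mathbb V^\lambda$ are defined over $\mathbb Q$, so nothing depends on the choice of $\KK\in\{\mathbb Q,\mathbb R,\mathbb C\}$.

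For the $\GL(\KK^d)$-statement I would use \Cref{prop:descent-alt}. It gives $V_\alpha(\KK^d)\simeq(\KK^d)^{\tensor k}\tensor_{\KK[S_k]}\KK[S_k]q_{\hat\alpha}p_\alpha$, compatibly with the $\GL(\KK^d)$-action, since the identification \eqref{eq:tensor-id} is $\GL$-equivariant. For $W_{\hat\alpha}(\KK^d)$ the same proposition gives $(\KK^d)^{\tensor k}\tensor_{\KK[S_k]}\KK[S_k]p_{\hat{\hat\alpha}}q_{\hat\alpha}$. Since $J(\hat{\hat\alpha})=[k-1]\setminus([k-1]\setminus J(\alpha))=J(\alpha)$, we have $\hat{\hat\alpha}=\alpha$, so this is $(\KK^d)^{\tensor k}\tensor_{\KK[S_k]}\KK[S_k]p_\alpha q_{\hat\alpha}$.

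By the first part, both left modules are isomorphic to $\bigoplus_\lambda(\mathbb V^\lambda)^{\oplus c_\lambda(\alpha)}$. The functor $(\KK^d)^{\tensor k}\tensor_{\KK[S_k]}(-)$ commutes with direct sums and is $\GL(\KK^d)$-equivariant, so it remains to apply Schur--Weyl duality in the form $(\KK^d)^{\tensor k}\tensor_{\KK[S_k]}\mathbb V^\lambda\cong\mathbb S^\lambda(\KK^d)$. This is zero when $\lambda$ has more than $d$ rows, which is consistent with the statement since then $\mathbb S^\lambda(\KK^d)=0$.

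The only point needing care is the convention in this last step. The tensor space is a right $\KK[S_k]$-module via $e_w\mapsto e_{\tau^{-1}(w)}$, while the ideals are left modules, so I would check that $(\KK^d)^{\tensor k}\tensor_{\KK[S_k]}\mathbb V^\lambda$ yields $\mathbb S^\lambda$ rather than $\mathbb S^{\lambda'}$ or a dual. Two facts settle this. First, every $\mathbb V^\lambda$ is self-dual, so converting between left and right modules via $\tau\mapsto\tau^{-1}$ does not change the isotypic type. Second, the normalization can be tested on $\lambda=(k)$: the trivial module $\KK[S_k]p_{(k)}$ produces $(\KK^d)^{\tensor k}.p_{(k)}=\Sym^k(\KK^d)=\mathbb S^{(k)}(\KK^d)$, as it should.
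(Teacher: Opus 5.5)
Your proposal is correct and follows the paper's own route: the $S_k$-decomposition comes from \Cref{lem:desc-rep} combined with Gessel's \Cref{thm:irrep-dec-char}, and the $\GL(\KK^d)$-statement comes from \Cref{prop:descent-alt} (using $\hat{\hat\alpha}=\alpha$ for $W_{\hat\alpha}$) followed by Schur--Weyl duality. Your remarks on the self-duality of the $\mathbb V^\lambda$ and the normalization check at $\lambda=(k)$ only make explicit the convention that the paper leaves implicit.
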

\begin{proof}
    The first assertion follows from \Cref{thm:irrep-dec-char}. This implies the second assertion by \Cref{prop:descent-alt} and Schur-Weyl duality.
\end{proof}

\begin{example}\label{ex:irrep-decomp}
Let us compute the multiplicities $c_\lambda(\alpha)$ for compositions $\alpha$ of $k=4$. We list all possible SYT by descent compositions in \Cref{tab:tableaux}.

\begin{table}[h]
\centering
\def\arraystretch{2}
    \begin{tabular}{cc|cc}
        $\alpha_T$ & SYT & $\alpha_T$ & SYT \\ \hline
        $(4)$ & $\ytableaushort{1234}$ & $(2,1,1)$ & $\ytableaushort{12,3,4}$ \\
        $(3,1)$ & $\ytableaushort{123,4}$ & $(1,2,1)$ & $\ytableaushort{13,2,4} \quad \ytableaushort{13,24}$ \\
        $(2,2)$ & $\ytableaushort{12,34} \quad \ytableaushort{124,3}$ & $(1,1,2)$ & $\ytableaushort{14,2,3}$\\
        $(1,3)$ & $\ytableaushort{134,2}$ & $(1,1,1,1)$ & $\ytableaushort{1,2,3,4}$\\
    \end{tabular}
    \caption{Young tableaux ordered by descent compositions.}\label{tab:tableaux}
\end{table}

Let us determine the decomposition of $\Pws^4_2(\KK^3)$ and $\Pwa^4_2(\KK^3)$ from this. By \Cref{prop:descent-alt}, we have
\[\Pws^4_2(\KK^3) = \bigoplus_{\alpha\in \comp(4,\leq 2)} V_\alpha(\KK^3) \quad \text{and} \quad \Pwa^4_2(\KK^3) = \bigoplus_{\alpha\in \comp(4,\leq 2)} W_\alpha(\KK^3).\]
Thus, 
\[\Pws_2^4(\KK^3) \cong \mathbb S^{(4)}(\KK^3) \oplus \mathbb S^{(3,1)}(\KK^3)^{\oplus 3} \oplus \mathbb S^{(2,2)}(\KK^3) \]
and 
\[\Pwa_2^4(\KK^3) \cong \mathbb  S^{(2,2)}(\KK^3) \oplus \mathbb S^{(2,1,1)}(\KK^3)^{\oplus 3} \oplus  \mathbb S^{(1,1,1,1)}(\KK^3).\]
Recall from \cite[Theorem 6.3]{Fulton1991RepresentationTA} that the dimension of $\mathbb S^\lambda (\KK^d)$ is
\[\prod_{1\leq i < j \leq d} \left(\frac{\lambda_i-\lambda_j}{j-i} + 1\right)\]
for partitions $\lambda$ of length $\leq d$, and $0$ otherwise. In the example, we compute
\[\dim \Pws^{4}_2(\KK^3) = 15 + 3\cdot 15 + 6 = 66\]
\[\dim \Pwa^{4}_2(\KK^3) = 6 + 3\cdot 3 + 0 = 15\]
which indeed agrees with \Cref{cor:dim-of-pws} (cf.\ \Cref{tab:dim-exmp}).

Let us compare this to the Thrall decomposition of $(\KK^3)^{\tensor 4}$,
\[(\KK^3)^{\tensor 4} = \bigoplus_{\lambda \vdash 4} U_\lambda,\]
where $U_\lambda$ denotes the \textit{Thrall module} associated to the partition $\lambda$. We refer to \cite{AMENDOLA2025} for an overview. \Cref{tab:thrall} shows the decomposition of the $U_\lambda$ into irreducibles. We computed this using the \texttt{Sage} code provided in \cite{AMENDOLA2025}. By comparing \Cref{tab:tableaux} and \Cref{tab:thrall} we see that the Thrall modules $U_\lambda$ do not in general decompose as a direct sum of $V_{\alpha}$, and the spaces $\Pws^4_2(\KK^3)$ and $\Pwa^4_2(\KK^3)$ do not generally decompose into Thrall modules.

\begin{table}[h]
    \centering
    \def\arraystretch{2}
    \begin{tabular}{c|c}
        $\lambda$ & Decomposition of $U_\lambda$\\\hline
        $(4)$ & $\ydiagram{2,1,1} \quad \ydiagram{3,1}$\\
        $(3,1)$ & $\ydiagram{2,1,1} \quad \ydiagram{2,2} \quad \ydiagram{3,1}$\\
        $(2,2)$ & $\ydiagram{1,1,1,1} \quad \ydiagram{2,2}$\\
        $(2,1,1)$ & $\ydiagram{2,1,1} \quad \ydiagram{3,1}$ \\
        $(1,1,1,1)$ & $\ydiagram{4}$ \\
    \end{tabular}
    \caption{Decomposition of summands in the Thrall decomposition of $(\KK^d)^{\tensor 4}$ into irreducible subrepresentations. We represent direct sums of irreducible representations as the collection of corresponding Young diagrams.}
    \label{tab:thrall}
\end{table}
\end{example}

\begin{remark}
    Even though this did not occur in \Cref{ex:irrep-decomp}, the irreducible representations in the decomposition of $V_\alpha$ can also appear with multiplicities. For example, the Young tableaux
    \[\ytableaushort{126,34,5} \quad \text{and} \quad \ytableaushort{124,36,5} \]
    both have descent composition $(2,2,2)$.

    Also note that the representations $V_\alpha$ and $V_\beta$ can be isomorphic for $\alpha \not= \beta$. A precise characterization of this behaviour in terms of $\alpha$ and $\beta$ is given in \cite{billera2006decomposable}. An example is the case where $\beta$ is obtained from $\alpha$ by reversal \cite[Proposition 4.3]{billera2006decomposable}, compare \Cref{tab:tableaux}.

    Finally, the decomposition of descent representations into irreducibles is also studied in \cite[Section 2]{BaRie22} where they are called \textit{Solomon modules}. The authors describe an algorithm to compute this decomposition in general (Algorithm 1 of loc.\ cit.).
\end{remark}

\FloatBarrier

\section{Application to Signatures of Piecewise Linear Paths}\label{sec:signatures}

In this section, we deduce interesting consequences of our main result for the \textit{signature tensors} of piecewise linear paths in $\mathbb R^d$.

A priori, a path in a finite-dimensional vector space is defined through its parametrization $X:[0,T]\to \mathbb R^d$. In this classical picture, a  \textit{piecewise linear path} is such an $X$ for which the derivative $X'$ exists and is constant on each of finitely many time intervals $(0,t_1),(t_1,t_2),\dots, (t_{m-1},t_m)$ which partition $[0,T]$.
However, often we encounter questions where the parametrization does not play any role.
Some of the most obvious attributes of an unparametrized path are its length and the vector from start point to end point.

The general parametrization-independent and translation invariant picture is to view a piecewise linear path merely as a finite sequence of segments\footnote{To be absolutely precise about being unparametrized, we would remove redundancies by requiring that $v_i$ is not a positive scalar multiple of $v_{i-1}$ for any $i$.} $(v_1,...,v_m)\in \mathbb R^{d\times m}$. These derive from the above by $v_i=(t_{i+1}-t_i)X'(t)=X(t_{i+1})-X(t_i)$, $t\in (t_{i-1},t_i)$. Note in particular once we observe the piecewise linear shape, the analytic derivative reduces to an algebraic difference quotient.

While the matrix $(v_1,\dots,v_m)$ obviously has as many columns as the number of segments of the path, the \textit{path signature} provides a way to map this data to a fixed space of sequences of tensors in an essentially injective manner.
Even though the resulting sequences are infinite, finite truncations can be flexibly chosen depending on the purpose to be computationally memory efficient while still getting a powerful description.
Path signatures go back to the works of Chen (\cite{bib:Che1954}, etc), but were later repopularized through Lyons' rough paths (\cite{Lyons98}, etc).
For introductions to the signature method in data science and machine learning, see \cite{chevyrevkormilitzin2016} and \cite{mcleod2025signature}.

To make things precise for the purpose of this article, let $\paths^d_{\leq m}$ denote the set of piecewise linear paths with at most $m$ segments\autonom{\(\paths^d_{\leq m}\)}{The set of piecewise linear paths in $\mathbb R^d$ with at most $m$ segments}, meaning that $X \in \paths^d_{\leq m}$ is given by a sequence $(v_1,\ldots,v_{m'})\in\mathbb R^{d\times m'}$ for $m' \leq m$. The signature $\sigma(X)\in T((\mathbb R^d)):=\prod_{k=0}^\infty (\mathbb R^d)^{\otimes k}$
can then simply be defined in the language of products between tensors, namely by
\begin{equation}\label{eq:def_sig_pwlinear}
    \sigma(X) :=\exp_{\otimes}(v_1)\otimes\cdots\otimes \exp_{\otimes}(v_{m'}),
\end{equation}
where
\begin{equation*}
    \exp_{\otimes }(y):=\sum_{k=0}^\infty \frac{1}{k!} y^{\otimes k}
\end{equation*}
is the formal series exponential such that
\begin{equation*}
    \proj_k \exp_{\otimes }(y)=\frac{1}{k!}\,y^{\otimes k}\in (\mathbb R^d)^{\otimes k}.
\end{equation*}

Note that in the word basis, the inner tensor product $\otimes: T((\mathbb R^d))\times T((\mathbb R^d))\to T((\mathbb R^d))$ is interpreted as the concatenation product
\begin{equation*}
    \Big(\sum_w a_w e_w\Big)\otimes \Big(\sum_{w'} b_{w'} e_{w'}\Big)=\sum_{w}\sum_{w'} a_w b_{w'} e_{ww'}=\sum_{w''}\Big(\sum_{ww'=w''} a_w b_{w'}\Big)e_{w''}.
\end{equation*}
where the sums run over all words in $d$ letters. 
In particular $e_{w_1}\otimes e_{w_2}=e_{w_1w_2}\in (\mathbb R^d)^{\otimes (k_1+k_2)}$ for $e_{w_1}\in (\mathbb R^d)^{\tensor k_1}$, $e_{w_2}\in (\mathbb R^d)^{\tensor k_2}$. 
Note that when we say all words, we mean including the empty word $\emptyword$, indeed $e_{\emptyword}$ spans the zeroth level component $(\mathbb R^d)^{\otimes 0}=\mathbb R$ of the tensor algebra.
Also note that the infinite series appearing are only formal, there is no analytic question of convergence involved. Signatures of piecewise linear paths are thus purely algebraic objects, however an \textit{iterated-integral} perspective will sometimes help the study as we shall see later.

However, sticking to the above definition for now, we may observe the classical properties of path signatures.
It is immediate that we obtain \textit{Chen's identity} \cite[Theorem~3.1]{bib:Che1954}
\begin{equation*}
    \sigma(X\sqcup Y)=\sigma(X)\otimes \sigma(Y)
\end{equation*}
where $X\sqcup Y$ is the concatenation of paths $X$ and $Y$.
If $X$ has segments $(v_1,\dots,v_{m_1})$ and $Y$ has segments $(v'_1,\dots,v'_{m_2})$,
then $X\sqcup Y$ has segments $(v_1,\dots,v_{m_1},v'_1,\dots,v'_{m_2})$.

The main interest in considering the path signature stems from the fact that it essentially characterizes paths up to parametrization and translation.
The slight extension of the equivalence class is so-called tree-like equivalence (see \cite{HL10}, \cite{BGLY16}), which in the multidimensional setting is a usually unproblematic technicality.

In our context of piecewise linear paths, we have the following formulation of that fact.

\begin{proposition}
Let $X,Y$ be piecewise linear paths through $\mathbb R^d$ such that no pair of consecutive segments is colinear. 
Then we have $\sigma(X)=\sigma(Y)$ if and only if the ordered sequence of segments of $X$ equals the ordered sequence of segments of $Y$.
\end{proposition}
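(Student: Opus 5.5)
The direction ``if'' is immediate from the definition \eqref{eq:def_sig_pwlinear}, so the work is in ``only if''. Let $X=(v_1,\dots,v_m)$ and $Y=(w_1,\dots,w_n)$. I read the hypothesis as also requiring all segments to be nonzero, since a zero segment is colinear with any neighbour. I reduce the statement to a single \emph{nontriviality lemma}:
\begin{quote}
if $u_1,\dots,u_N\in\mathbb R^d$, $N\geq 1$, are nonzero and no two consecutive $u_i$ are colinear, then $\exp_\otimes(u_1)\otimes\cdots\otimes\exp_\otimes(u_N)\neq 1$.
\end{quote}

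\textbf{Reduction to the lemma.} Since $\exp_\otimes(v)^{-1}=\exp_\otimes(-v)$, the hypothesis $\sigma(X)=\sigma(Y)$ is equivalent to
\[\exp_\otimes(v_1)\otimes\cdots\otimes\exp_\otimes(v_m)\otimes\exp_\otimes(-w_n)\otimes\cdots\otimes\exp_\otimes(-w_1)=1 .\]
The only place where consecutive vectors in this list may be colinear is the junction $v_m,-w_n$.
\begin{itemize}
\item If $v_m,w_n$ are not colinear, the list satisfies the lemma's hypotheses, so the lemma gives a contradiction, unless $m=n=0$.
\item If $v_m,w_n$ are colinear and $v_m\neq w_n$, merge $\exp_\otimes(v_m)\otimes\exp_\otimes(-w_n)=\exp_\otimes(v_m-w_n)$. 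The vector $v_m-w_n$ is a nonzero multiple of $v_m$ and of $w_n$, so it is not colinear with $v_{m-1}$ or $w_{n-1}$. The lemma again gives a contradiction.
\item If $v_m=w_n$, cancel the pair and induct on $m+n$.
\end{itemize}
Either way, the segment sequences agree. The boundary cases where one of $m,n$ reaches $0$ are also covered by the lemma.

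\textbf{Proof of the lemma.} Suppose the product equals $1$. I first reduce to $d=2$. A generic linear map $A:\mathbb R^d\to\mathbb R^2$ keeps each $Au_i$ nonzero and keeps the finitely many consecutive pairs non-colinear. The induced algebra map $A^{\otimes}$ on $T((\mathbb R^d))$ sends $\exp_\otimes(u)$ to $\exp_\otimes(Au)$, so the product of the $\exp_\otimes(Au_i)$ is still $1$.

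Next I rescale. Multiplying level $k$ by $\lambda^k$ is an algebra automorphism, so the product for $(\lambda u_i)_i$ is also $1$ for every $\lambda>0$.

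Now I develop the path into the hyperbolic plane. Fix a linear map $\phi:\mathbb R^2\to\mathfrak{sl}_2(\mathbb R)$ onto the span of the two standard hyperbolic generators, for instance $\mathrm{diag}(1,-1)$ and the symmetric off-diagonal matrix. Then $\exp(\phi(u))$ is a hyperbolic translation of length proportional to $|u|$ along a geodesic through a base point $o$, in direction $u$. The map $\phi$ extends to a continuous algebra homomorphism on the elements that occur, namely finite products of exponentials, where all series converge absolutely. It therefore sends the identity above to
\[\exp(\phi(\lambda u_1))\cdots\exp(\phi(\lambda u_N))=I .\]
Geometrically, the orbit of $o$ under the successive partial products traces a piecewise geodesic path. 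Its segments have lengths $c\lambda|u_i|$, and its turning angles equal the Euclidean angles between $u_i$ and $u_{i+1}$, which are bounded away from $0$ and $\pi$ by the non-colinearity assumption. I have not checked that the development preserves these angles in exactly this form; that needs to be confirmed.

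\textbf{Main obstacle.} The crucial step is the hyperbolic-geometry estimate. A piecewise geodesic in $\mathbb H^2$ whose segments are all longer than some $L$ (depending on the angle bound), and whose angles between consecutive segments are bounded away from $0$, is a quasi-geodesic. Hence its endpoint lies at distance at least, say, $\tfrac12\sum_i c\lambda|u_i|>0$ from $o$ once $\lambda$ is large. This contradicts the product being $I$, since the endpoint would then equal $o$. I would prove the estimate by the standard local-to-global argument, comparing each concatenation with the geodesic between its endpoints using thin triangles. Alternatively, I would use a ping-pong argument on the boundary circle: for large $\lambda$, each translation maps a neighbourhood of its attracting endpoint into itself, and consecutive attracting neighbourhoods are disjoint from the repelling ones because consecutive directions are not colinear.
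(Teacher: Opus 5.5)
Your proof is correct, and it takes a genuinely different route from the paper. The paper gives no direct argument. It notes that a piecewise linear path without consecutive colinear segments is nondegenerate, hence reduced, in the sense of Hambly--Lyons, and then invokes their uniqueness theorem for reduced bounded-variation paths. You instead use $\exp_\otimes(v)^{-1}=\exp_\otimes(-v)$ to turn $\sigma(X)=\sigma(Y)$ into triviality of a single word of exponentials. You shorten that word by cancelling or merging at the junction, which is the normal-form argument for a free product of the one-parameter groups $\exp_\otimes(\mathbb R v)$. You then prove that reduced words are nontrivial by projecting to $\mathbb R^2$, rescaling, and evaluating in $\mathrm{SL}_2(\mathbb R)$. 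That evaluation is legitimate: for a finite product of exponentials, the levelwise images form an absolutely convergent series whose sum is the matrix product, while for the element $1$ they sum to $I$.

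As for what each route buys: the paper's citation is short and places the statement inside the general tree-like-equivalence theory. However, it outsources everything, including the identification of these paths as reduced. Your argument is self-contained up to standard north--south dynamics of hyperbolic isometries and uses the group structure special to piecewise linear signatures. In effect it is the hyperbolic-development mechanism behind Hambly--Lyons's own proof, specialized to this case.

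Both points you left open hold.
\begin{itemize}
\item \textbf{Angles.} Write $g_j$ for the $j$-th partial product. From $\exp((\lambda+s)\phi(u))=\exp(\lambda\phi(u))\exp(s\phi(u))$, the $j$-th developed segment ends with velocity $d(g_j)_o$ applied to the vector corresponding to $u_j$. The next segment starts with $d(g_j)_o$ applied to the vector corresponding to $u_{j+1}$. Take $\phi(x,y)=\tfrac12\begin{pmatrix}x&y\\ y&-x\end{pmatrix}$ and $o=i$ in the upper half-plane. Then the map sending $u$ to the initial velocity of $t\mapsto\exp(t\phi(u))\cdot o$ is a linear isometry $\mathbb R^2\to T_o\mathbb H^2$. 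So the turning angles are the Euclidean angles, and the segment lengths are $\lambda|u_j|$.
\item \textbf{Closing step.} The quickest way to finish is your ping-pong variant, which needs no angles at all. For large $\lambda$, $h_i=\exp(\phi(\lambda u_i))$ maps the complement of a small arc around the axis endpoint $\xi_i^-$ (direction $-u_i$) into a small arc around the endpoint $\xi_i^+$ (direction $u_i$). Moreover $\xi_{i+1}^+\neq\xi_i^-$, because $u_{i+1}$ is not a negative multiple of $u_i$. Take a boundary point outside the arcs around $\xi_N^-$ and $\xi_1^+$. The product $h_1\cdots h_N$ sends it into the arc around $\xi_1^+$, hence $h_1\cdots h_N\neq I$.
\end{itemize}
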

Note that the proof should be seen as a technical justification and literature review that can be skipped for a first read.
\begin{proof}
    If $X$ and $Y$ have the same ordered sequence of segments, then obviously $\sigma(X)=\sigma(Y)$ by \eqref{eq:def_sig_pwlinear}.

    So now assume $X$ and $Y$ do not have the same ordered sequence of segments.
    A piecewise linear path such that no pair of consecutive segments is colinear is precisely what is called a nondegenerate piecewise linear path in \cite[Definition~6.1]{HL10} and is in particular a reduced path in the sense of \cite[Corollary~1.6]{HL10}, i.e.\ shortest among all paths with the same signature, by the results of \cite{HL10} taken together.
    Due to their distinct sequence of segments without colinearity of consecutive pairs, $X$ and $Y$ are distinct reduced paths, distinct even after we translate them individually to both start in the origin and arclength parametrize them, and thus $\sigma(X)\neq \sigma(Y)$ by the uniqueness part of \cite[Corollary~1.6]{HL10}.

    In short, the path signature forms an injective map when the domain is reduced to arclength parametrized bounded variation paths that start in the origin and that do not exhibit tree-like excursions. 
\end{proof}

For a first intuition about this statement, note that
\begin{equation*}
    \exp_{\otimes}(v_1)\otimes \exp_{\otimes}(v_2)=\exp_{\otimes}(v_2)\otimes \exp_{\otimes}(v_1)
\end{equation*}
if and only if 
\begin{equation*}
    \exp_{\otimes}(v_1)\otimes \exp_{\otimes}(v_2)=\exp_{\otimes}(v_1+v_2)
\end{equation*}
if and only if $v_1,v_2\in\mathbb R^d$ are colinear. This is a special case of the general formula
\begin{equation*}
    \exp_{\otimes}(v_1)\otimes \exp_{\otimes}(v_2)=\exp_{\otimes}(x)
\end{equation*}
with $x\in T((\mathbb R^d))$ given in terms of iterated Lie-brackets by the Baker-Campbell-Hausdorff formula, see e.g.\ \cite{reutenauer93}. However, $x$ has infinite degree for generic $v_1,v_2$.

Let $T(\mathbb R^d):=\bigoplus_{k=0}^\infty (\mathbb R^d)^{\otimes k}$ denote the space of finite sequences of tensors. The canonical pairing of $T((\mathbb R^d))$ with $T(\mathbb R^d)$ is given by
\begin{equation*}
    \Big\langle\, \sum_w a_w e_w,\,\sum_{w'}b_{w'}e_{w'}\Big\rangle=\sum_{w'}a_{w'}b_{w'},
\end{equation*}
where the sums run over all words, yet only finitely many $b_{w'}$ are non-zero.
There is a commutative associative product $\shuffle$ called the \textit{shuffle} on $T(\mathbb R^d)$, which we will define properly in Section~\ref{sec:ideals}, giving rise to Ree's \cite{Ree58} shuffle identity 
\begin{equation*}
    \langle\sigma(X),x\shuffle y\rangle =\langle \sigma(X),x\rangle \langle \sigma(X),y\rangle.
\end{equation*}
This allows for the famous observation that polynomials in the components of the path signature can be re-expressed as linear forms on the path signature.
The identity is obtained by first checking that it holds for $\exp_\otimes (v), v\in \mathbb R^d$ (or more generally, a \textit{Lie series} as the argument), and subsequently verifying that if it holds for $x,y\in T((\mathbb R^d))$, then it also holds for $x\otimes y$ (yielding the so-called \textit{character group} structure). This is a standard exercise in the general context of connected graded Hopf algebras, see e.g. \cite{reutenauer93} or \cite[Section~3.3]{Preiss16}.

Let us finally observe the following application of this article's previous findings to the level $k$ signature tensors
\begin{equation*}
    \sigma^{(k)}(X):=\proj_k \sigma(X).
\end{equation*}

\begin{proposition}\label{prop:span_of_signatures}
The linear hull of the level $k$ signature tensors for all paths $X$ through $\mathbb R^d$ with (up to) $m$ segments is given by $\Pws^{k}_m(\mathbb R^d)$,
\begin{equation}\label{eq:pwsym}
    \spann \sigma^{(k)}(\paths_{\leq m}^d)=\Pws^{k}_m(\mathbb R^d).
\end{equation}
\end{proposition}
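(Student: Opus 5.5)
We prove the two inclusions of \eqref{eq:pwsym} separately. The inclusion ``$\subseteq$'' comes directly from the product formula \eqref{eq:def_sig_pwlinear}. The inclusion ``$\supseteq$'' uses polarization to show that every spanning tensor of every $\Sym^\alpha(\mathbb R^d)$ with $\alpha \in \comp(k,\leq m)$ lies in the span of signatures.

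\emph{Inclusion $\subseteq$.} Let $X$ have segments $(v_1,\dots,v_{m'})$ with $m'\leq m$. Expanding \eqref{eq:def_sig_pwlinear} and projecting to level $k$ gives
\[\sigma^{(k)}(X)=\sum_{\substack{\beta\in\mathbb N_0^{m'}\\ \beta_+=k}}\frac{1}{\beta_1!\cdots\beta_{m'}!}\,v_1^{\tensor\beta_1}\tensor\cdots\tensor v_{m'}^{\tensor\beta_{m'}}.\]
In each summand, drop the zero entries of $\beta$. What remains is a composition $\alpha$ of $k$ of length at most $m'\leq m$. The summand is then a tensor product of powers $v_i^{\tensor\alpha_j}\in\Sym^{\alpha_j}(\mathbb R^d)$, so it lies in $\Sym^\alpha(\mathbb R^d)\subseteq\Pws^k_m(\mathbb R^d)$.

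\emph{Inclusion $\supseteq$.} Fix $\alpha\in\comp(k,\leq m)$. We may take $|\alpha|=m$, because $\Sym^\alpha$ only grows under refinement (\Cref{rmk:sym_antisym_subset}), and every composition in $\comp(k,\leq m)$ is refined by one of length exactly $m$ when $m\leq k$. By polarization, $\Sym^{\alpha_j}(\mathbb R^d)$ is spanned by pure powers $u^{\tensor\alpha_j}$. Hence $\Sym^\alpha(\mathbb R^d)$ is spanned by the tensors $u_1^{\tensor\alpha_1}\tensor\cdots\tensor u_m^{\tensor\alpha_m}$.

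Fix $u_1,\dots,u_m$ and consider the path with segments $(t_1u_1,\dots,t_mu_m)$ for real scalars $t_i$. Its level $k$ signature is
\[\sigma^{(k)}=\sum_{\beta\in\mathbb N_0^m,\ \beta_+=k} t^\beta\,\frac{1}{\beta!}\,u_1^{\tensor\beta_1}\tensor\cdots\tensor u_m^{\tensor\beta_m}.\]
This is a polynomial in $t=(t_1,\dots,t_m)$ with tensor coefficients, and the monomials $t^\beta$ are linearly independent functions on $\mathbb R^m$. Therefore every coefficient lies in the linear span of the values at finitely many points $t$. One can see this either by Lagrange interpolation on a sufficiently large grid, or by noting that a linear functional vanishing on all values annihilates the polynomial identically, hence annihilates each coefficient. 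In particular, the coefficient of $t^\alpha$, which is a nonzero multiple of $u_1^{\tensor\alpha_1}\tensor\cdots\tensor u_m^{\tensor\alpha_m}$, lies in $\spann\sigma^{(k)}(\paths^d_{\leq m})$.

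Some parameter values produce degenerate paths, for example when $t_i=0$ or when consecutive segments are collinear. These are still paths with at most $m$ segments, and \eqref{eq:def_sig_pwlinear} applies to them verbatim, with $\exp_\tensor(0)=1$. So no genericity argument is needed, and the step is clean.

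The only delicate point is the interpolation argument. We will phrase it as the duality statement: a linear form on $(\mathbb R^d)^{\tensor k}$ that kills all signatures kills every coefficient of the polynomial map $t\mapsto\sigma^{(k)}$.
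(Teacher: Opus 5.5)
Your proof is correct and uses the same mechanism as the paper: rescale the segments, extract the homogeneous coefficients of the resulting polynomial in the scaling parameters, and use polarization to reach all of $\Sym^{\alpha}(\mathbb R^d)$. The paper extracts coefficients one factor at a time, via $\frac{d^i}{dt^i}\exp_{\otimes}(tv)|_{t=0}=v^{\otimes i}$ and closedness of the span, and then combines the factors by multilinearity of spans; you instead extract the coefficient of $t^\alpha$ from the $m$-variable polynomial in one step by duality, which avoids limits (the reduction to $|\alpha|=m$ is unnecessary, and for $m>k$ you should simply use a path with $|\alpha|$ segments).
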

\begin{proof}
Let us write 
\begin{equation*}
    \exp_{\otimes}^{\leq k}(y):=\sum_{i=0}^k\frac{1}{i!}\,y^{\otimes i}
\end{equation*}
and
\begin{equation*}
    \Sym^{\leq k}(\mathbb R^d):=\bigoplus_{i=0}^k \Sym^i(\mathbb R^d)\subseteq \bigoplus_{i=0}^k (\mathbb R^d)^{\otimes i}.
\end{equation*}
 
First of all, we obviously have $\spann_{v}{\exp^{\leq k}_{\otimes}}(v)\subseteq\Sym(\mathbb R^d)^{\leq k}$. To see that even equality holds, note that for any $v\in \mathbb R^d$ we have that
    \begin{equation*}
        \frac{d^i}{dt^i}\exp_{\otimes}(tv)|_{t=0}=v^{\otimes i}.
    \end{equation*}
    Forming that derivative is forming linear combinations inside $\spann_{v}{\exp^{\leq k}_{\otimes}}(v)$ and taking limits, and $\spann_{v}{\exp^{\leq k}_{\otimes}}(v)$ is Euclidean closed as a subspace of the finite dimensional $T(\mathbb R^d)^{\leq k}$. Thus 
    \begin{equation*}
    \Sym(\mathbb R^d)^{\leq k}=\spann\{v^{\otimes i}|v\in\mathbb R^d, i=0,\dots,k\}\subseteq \spann_v\exp_{\otimes}^{\leq k}(v).
    \end{equation*}
    
    To conclude, we finally compute
    \begin{align*}
    \spann \sigma^{(k)}(\paths_{\leq m}^d)&=\spann\{\proj_{k}\exp_{\otimes}(v_1)\otimes\cdots\otimes\exp_{\otimes}(v_m)|v_1,\ldots,v_m\in\mathbb{R}^d\}\\
    &=\spann\{\proj_{k}\exp^{\leq k}_{\otimes}(v_1)\otimes\cdots\otimes\exp^{\leq k}_{\otimes}(v_m)|v_1,\ldots,v_m\in\mathbb{R}^d\}\\
    &=\proj_{k}\spann \{\exp^{\leq k}_{\otimes}(v_1)\otimes\cdots\otimes\exp^{\leq k}_{\otimes}(v_m)|v_1,\ldots,v_m\in\mathbb{R}^d\}\\
    &=\proj_{k}\big(\spann_{v_1}{\exp^{\leq k}_{\otimes}}(v_1)\otimes\cdots\otimes \spann_{v_m}\exp^{\leq k}_{\otimes}(v_m)\big)\\
    &=\proj_{k}\big(\Sym(\mathbb R^d)^{\leq k}\otimes \cdots \otimes \Sym(\mathbb R^d)^{\leq k}\big)\\
    &=\sum_{\substack{k_1+\ldots +k_\ell=k,\\\ell\leq m}}\Sym^{k_1}(\mathbb R^d)\otimes\cdots\otimes \Sym^{k_\ell}(\mathbb R^d)\\
    &= \Pws^{k}_m(\mathbb R^d)
    \end{align*}
    Regarding the first equality, note that for less than $m$ segments, we can just choose some $v_i$ to be zero. 
\end{proof}

We are equally interested in the \textit{relations} for signatures of piecewise linear paths (cf.\ \cite[Section~1]{Pre24})
\begin{align*}
    \mathcal{I}(\paths^d_{\leq m})&:=\{y\in T(\mathbb R^d)\,|\,\langle \sigma(X),y\rangle =0\, \forall X\in\paths^d_{\leq m}\}\\
    &\,\,=\{y\in T(\mathbb R^d)\,|\,\langle x,y\rangle=0\,\forall x\in {\textstyle \bigoplus_{k=0}^{\infty}\spann\sigma^{(k)}(\paths^d_{\leq m})}\}.
\end{align*}\autonom{\(\mathcal{I}(\paths^d_{\leq m})\)}{The vanishing ideal of $\sigma(\paths^d_{\leq m})$ in the tensor algebra $T(\mathbb R^d)$}
Note that $T((\mathbb R^d))$ is the algebraic dual space of $T(\mathbb R^d)$ (however, $T(\mathbb R^d)$ is of course not the algebraic dual space of $T((\mathbb R^d))$!),
so $\mathcal{I}(\paths^d_{\leq m})$ is the predual annihilator of $\sigma(\paths^d_{\leq m})$. 
The subspace $\mathcal{I}(\paths^d_{\leq m})$ was first explicitly introduced in \cite{lotter2025cyclic} at the very end of Section~2.

From Theorem \ref{thm:main} and Proposition \ref{prop:span_of_signatures}, we immediately obtain the following.
\begin{corollary}\label{cor:AnnSig}
The set of all linear relations on level $k$ signature tensors of paths of $m$ segments is the set of piecewise alternating tensors with $k-m$ connected blocks,
\begin{equation*}
\proj_k\,\mathcal{I}(\paths^d_{\leq m})= \{ y\in(\mathbb R^d)^{\otimes k}\,|\,\langle x,y\rangle=0\,\,\,\forall x \in \sigma^{(k)}(\paths^d_{\leq m})\}=\Pwa^{k}_{k-m}(\mathbb R^d).
\end{equation*}
As a graded linear subspace of $T(\mathbb R^d)$,
\begin{equation*}
\mathcal{I}(\paths^d_{\leq m})=\bigoplus_{k=0}^\infty \Pwa^{k}_{k-m}(\mathbb R^d),
\end{equation*}
where we from now on additionally put $\Pwa^{k}_{k-m}(\mathbb R^d):=\{0\}$ for $k-m<0$.
\end{corollary}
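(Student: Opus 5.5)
The plan is to reduce the corollary to \Cref{prop:span_of_signatures} and \Cref{thm:main}, working one level $k$ at a time.

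First I will check that $\mathcal{I}(\paths^d_{\leq m})$ is graded. It is defined by the vanishing of $\langle x,y\rangle$ for all $x$ in $\bigoplus_k \spann\sigma^{(k)}(\paths^d_{\leq m})$, and this test space is itself a direct sum of homogeneous pieces. The pairing is also graded: distinct levels pair to zero. So $y=\sum_k y_k$ lies in $\mathcal{I}(\paths^d_{\leq m})$ if and only if each homogeneous component $y_k$ does. I will make this precise with a scaling argument. Scaling the segments $v_i\mapsto tv_i$ sends $\sigma^{(k)}(X)\mapsto t^k\sigma^{(k)}(X)$, so $\langle\sigma(tX),y\rangle=\sum_k t^k\langle\sigma^{(k)}(X),y_k\rangle$. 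This is a polynomial in $t$ that vanishes identically, so each coefficient $\langle\sigma^{(k)}(X),y_k\rangle$ vanishes. This gives
\[\mathcal{I}(\paths^d_{\leq m})=\bigoplus_k \proj_k\mathcal{I}(\paths^d_{\leq m}),\]
with $\proj_k\mathcal{I}(\paths^d_{\leq m})$ equal to the annihilator of $\sigma^{(k)}(\paths^d_{\leq m})$ in $(\mathbb R^d)^{\otimes k}$. This is the first equality in the statement.

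Second, the annihilator of a set equals the annihilator of its linear span. By \Cref{prop:span_of_signatures}, this span is $\Pws^k_m(\mathbb R^d)$. The pairing on $(\mathbb R^d)^{\otimes k}$ is the standard inner product, so \Cref{thm:main} identifies the annihilator with the orthogonal complement, which is $\Pwa^k_{k-m}(\mathbb R^d)$. This holds for $0\le m\le k$.

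The remaining case $k<m$ is the only mild obstacle, since \Cref{thm:main} is stated only for $m\le k$. Here every composition of $k$ has length at most $k<m$, so $\Pws^k_m(\mathbb R^d)\supseteq\Pws^k_k(\mathbb R^d)=(\mathbb R^d)^{\otimes k}$, the last equality because $\Sym^{(1,\dots,1)}$ is the whole space. The annihilator is therefore $\{0\}$, which matches the stated convention $\Pwa^k_{k-m}:=\{0\}$. The level $k=0$ falls under this case when $m\geq 1$ (both sides are $\{0\}$). If $m=0$ and $k=0$, \Cref{thm:main} applies directly, since the empty path has signature $1$.
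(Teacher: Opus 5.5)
Your proposal is correct and follows the paper's route. The paper obtains the corollary immediately by combining \Cref{prop:span_of_signatures} with the orthogonal decomposition of \Cref{thm:main}; your scaling argument for gradedness and your treatment of $k<m$ simply make explicit what the paper leaves implicit.

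The only inaccuracy is your remark on $k=m=0$. \Cref{thm:main} is really a statement for $k\geq 1$: with the empty composition one gets $\Pws^0_0(\mathbb R^d)=\Pwa^0_0(\mathbb R^d)=\mathbb R$, so the decomposition fails there and the theorem does not ``apply directly''. In that corner the annihilator $\{0\}$ matches the statement only if $\Pwa^0_0(\mathbb R^d)$ is read as $\{0\}$. This is a convention issue outside the intended range $m\geq 1$, not a consequence of the theorem.
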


\begin{remark}
    We skip any details here, but want to note the following for the interested reader.
    Often one considers instead of the signature of a path $X$ its \textit{log-signature}
    \begin{equation*}
        \log_{\otimes}(\sigma(X)),
    \end{equation*}
    which lives in the completed free Lie algebra. 
    Then, analogously, we can ask what is
    \begin{equation*}
        \spann\proj_k\log_{\otimes}\sigma(\paths^d_{\leq m})
    \end{equation*}
    as well as what is its annihilator.
    The latter can be described by first intersecting $\mathcal{I}(\paths^d_{\leq m})$ with the graded linear subspace of $T(\mathbb R^d)$ given by \textit{coordinates of the first kind}, and then apply the canonical bijection between coordinates of the first kind and the dual of the free Lie algebra.
    Finally, we obtain the linear span by taking the annihilator of these relations.
\end{remark}
Let us from now on write
\begin{equation*}
    \vol(w):=\sum_{\tau \in S_{|w|}} \mathrm{sgn}(\tau) e_{\tau(w)}
\end{equation*}
for the antisymmetrization we already encountered before.

This is motivated by the fact that for a sequence $w=i_1\cdots i_k$ with $i_j\in\{1,\dots,d\}$ and a path $X\in\paths^d_{\leq m}$,
\begin{equation*}
    \langle \sigma(X),\vol(w)\rangle
\end{equation*}
is $k!$ times the \textit{signed volume} enclosed by the path $(X_{i_1},\ldots,X_{i_k})\in\paths^{k}_{\leq m}$, see \cite{DR18}. In particular, if $(X_{i_1},\ldots,X_{i_k})$ is a convex path, then it is $k!$ times the volume of the convex hull of the image of $(X_{i_1},\ldots,X_{i_k})$, cf.\ \cite{amendolaleemeroni23}.

While the length of a piecewise linear path does not depend on the ``direction of travel'',
the full increment vector already mentioned above does.
The signed volumes even depend on the ``sequence of events'', i.e.\ they are not in general invariant under permutations of the segments even if they preserve the image of the path\footnote{Non-trivial permutations of segments that preserve the image may occur when the path has self-intersections.}.

The main theorem of this article can be re-expressed using concatenations of signed volumes.

\begin{theorem}\label{thm:conc_vol}
    The set of all 
    \begin{equation*}
    \vol(w_1)\otimes \cdots \otimes \vol(w_\ell)
    \end{equation*}
    where $k_1+\ldots+k_\ell=k$, $d\geq k_i\geq 1$, $k-\ell+1>m$ and $w_i$ strictly decreasing with $|w_i|=k_i$ forms a linear generating set of $ \Pwa^{k}_{k-m}(\mathbb R^d)$.
    \end{theorem}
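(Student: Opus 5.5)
The plan is to compare these tensors with the basis of $\Pwa^k_{k-m}(\mathbb R^d)$ built from $\rho_m$. That basis consists of $\rho_m(e_w)=e_w.q_{\xi(w)}$ for words $w$ with $|\delta(w)|\geq m+1$, i.e.\ $|\xi(w)|\leq k-m$ (using $|\delta(w)|+|\xi(w)|=k+1$). The proof has two steps: show each listed tensor lies in $\Pwa^k_{k-m}(\mathbb R^d)$, and show every basis tensor $\rho_m(e_w)$ is, up to a nonzero scalar, one of them.

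\emph{Containment.} Take $\vol(w_1)\otimes\cdots\otimes\vol(w_\ell)$ with $|w_i|=k_i$ and set $\alpha=(k_1,\ldots,k_\ell)\in\comp(k)$. Each $\vol(w_i)$ is an antisymmetrization, so it lies in $\AntiSym^{k_i}(\mathbb R^d)$. Hence the tensor product lies in $\AntiSym^\alpha(\mathbb R^d)$. The condition $k-\ell+1>m$ means $\ell\leq k-m$, so $\alpha\in\comp(k,\leq k-m)$. The tensor therefore lies in $\Pwa^k_{k-m}(\mathbb R^d)$.

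\emph{Spanning.} Let $w$ be a word with $|\xi(w)|=\ell\leq k-m$. By the description of $\xi(w)$ in Section 2, $w$ splits uniquely into consecutive maximal strictly decreasing subwords $w=w_1\cdots w_\ell$ with $|w_i|=\xi(w)_i$. A strictly decreasing word over $\{1,\ldots,d\}$ has length at most $d$, so $d\geq k_i\geq 1$ holds automatically. Since $S_{\xi(w)}$ acts blockwise, we get
\[
e_w.q_{\xi(w)}=\frac{1}{\xi(w)_1!\cdots\xi(w)_\ell!}\,\vol(w_1)\otimes\cdots\otimes\vol(w_\ell),
\]
up to the sign convention for the right action. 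This sign convention is harmless, because $\mathrm{sgn}(\tau)=\mathrm{sgn}(\tau^{-1})$ and the sum runs over the whole group. The condition $\ell\leq k-m$ is exactly $k-\ell+1>m$. So every basis vector from \Cref{lmm:auto} (the case $|\delta(w)|\geq m+1$) is a nonzero multiple of an element of the proposed set.

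\emph{Conclusion.} By \Cref{thm:main} and \Cref{lmm:auto}, the vectors $\rho_m(e_w)$ with $|\delta(w)|\geq m+1$ span $\Pwa^k_{k-m}(\mathbb R^d)$. Combined with containment, the set is a generating set.

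The only point needing care is matching $\rho_m(e_w)$ exactly to a tensor product of $\vol$'s. The factorization of the blockwise antisymmetrizer over a block-product group is immediate, so I do not expect a real obstacle. Note that the set is larger than a basis: it also contains products whose blocks are not maximal decreasing runs, and these are covered by containment.
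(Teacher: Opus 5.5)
Your proof is correct and is essentially the paper's own argument. The paper treats \Cref{thm:conc_vol} as a restatement of \Cref{thm:main}: the basis vectors $\rho_m(e_w)=e_w.q_{\xi(w)}$ with $|\delta(w)|\geq m+1$ from \Cref{lmm:auto} are nonzero scalar multiples of concatenations of volumes of the maximal strictly decreasing runs of $w$, and containment follows directly from $\vol(w_i)\in\AntiSym^{k_i}(\mathbb R^d)$. The iterated-integral argument given after the theorem in the paper is only an alternative, geometric proof of the containment direction.
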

    Examples for $d=3,m=5,k=6$ and $d=4,m=6,k=7$ were already given in \cite[Example~4.6 and Section~5]{lotter2025cyclic}.

    Let us provide an alternative geometric argument of why the concatenations of volumes from Theorem \ref{thm:conc_vol} are elements of $\proj_k \mathcal I(\paths^d_{\leq m}) = \Pwa^{k}_{k-m}(\mathbb R^d)$.
    This also goes back to \cite[Example~4.6]{lotter2025cyclic}, but is given here for the first time in complete generality.
    
    Given a piecewise linear path $X$, the signature can also be presented in integral form, i.e.\
    \begin{equation*}
        \langle\sigma(X),e_{i_1\cdots i_k}\rangle=\int_{0\leq t_1\leq \cdots\leq t_k\leq T} X_{i_1}'(t_1)\cdots X'_{i_k}(t_k)\,\mathrm dt_1\cdots \mathrm dt_k.
    \end{equation*}
    The integral form is actually much more general and works for bounded variation paths and even beyond, if we replace the derivatives by Stieltjes integrals, see e.g.\ \cite{FV10}.

    \begin{definition}\label{def:sigintegrand}
        For $z=\sum_{i_1,\dots,i_k=1}^d a_{i_1\cdots i_k}e_{i_1\cdots i_k}\in (\mathbb R^d)^{\otimes k}$ and a piecewise linear path $X$, let $\sigintegrand{X}{z}:\,\Delta_T^k\to \mathbb R$ denote the integrand of $\langle \sigma(X),z\rangle $ given by
        \begin{equation*}
        \sigintegrand{X}{z}(t_1,\dots,t_k):=\sum_{i_1,\dots,i_k=1}^d a_{i_1\cdots i_k} X_{i_1}'(t_1)\cdots X_{i_k}'
(t_k)        \end{equation*}
for all $(t_1,\dots,t_k)\in \Delta_T^k$ such that all the derivatives exist, and let  $\sigintegrand{X}{z}(t_1,\dots,t_k)$ be zero otherwise.
    \end{definition}
    For $w=i_1\cdots i_k$ and $v_1,\ldots, v_k \in \mathbb R^d$, $v_j=(v_{j,1},\dots,v_{j,d})^\top$, put
    \begin{equation*}
        {\det}_{w}(v_1,\dots,v_k):=\det\begin{pmatrix}v_{1,i_1}& \cdots &v_{k,i_1}\\\vdots& & \vdots\\
        v_{1,i_k}&\cdots & v_{k,i_k}\end{pmatrix}.
    \end{equation*}
    Now let $\vol(w_1)\otimes \cdots \otimes \vol(w_\ell)$ be a concatenation of volumes, let $k_i$ be the length of $w_i$ and set  $k^+_j:=k_1+\ldots+k_j$.
    Then we obtain
    \begin{small}
    \begin{align*}
        \sigintegrand{X}{\vol(w_1)\otimes \cdots \otimes \vol(w_\ell)}={\det}_{w_1}(X'({t_1}),\dots, X'(t_{k_1}))\cdots{\det}_{w_\ell} (X'({t_{k-k_{\ell}+1}}),\dots, X'(t_{k})).
    \end{align*}
    \end{small}If this expression does not vanish, then $(X'(t_{k^+_j+1}),\dots,X'(t_{k^+_j+k_{j+1}}))$ must be linearly independent for all $0\leq j \leq \ell-1$. 
    In particular, there must be $t_{k^+_j+1}\leq\dots\leq t_{k^+_j+k_{j+1}}$ such that $X'(t_{k^+_j+1}),\dots,X'(t_{k^+_j+k_{j+1}})$ are pairwise distinct segments.
    But since also $t_{k^+_j+k_{j+1}}=t_{k^+_{j+1}}\leq t_{k^+_{j+1}+1}$ for each $j$, this means that
    $X$ must have at least
    \begin{equation*}
    k_\ell + \sum_{i=1}^{\ell -1} (k_i - 1) =k-\ell+1
    \end{equation*}
    segments.
    Therefore, conversely, if $X$ has $m$ segments with $m<k-\ell+1$, then 
    \begin{equation*}
        \langle\sigma(X),\vol(w_1)\otimes \cdots \otimes \vol(w_\ell)\rangle=0.
    \end{equation*}
    Thus 
    \begin{equation*}
        \vol(w_1)\otimes \cdots \otimes \vol(w_\ell)\in \proj_{k_\ell^+}\,\mathcal{I}(\paths^d_{\leq m}).
    \end{equation*}
Let us illustrate this in an example.
\begin{example}
    Consider the concatenation of volumes $\vol(1,2) \tensor \vol(1,2)$. If $X$ is a path with two segments, then $\det(X'(t_1) \ X'(t_2))\not=0$ forces $X'(t_3) = X'(t_4)$ as visualized in \Cref{fig:vol-van}. Thus, we always have $\det(X'(t_1) \ X'(t_2)) \det(X'(t_3) \ X'(t_4)) = 0$, and thus $\langle \sigma(X), \vol(1,2) \tensor \vol(1,2)\rangle = 0$.
    \end{example}

    \begin{figure}[h]
    \centering
    \includegraphics[width=0.5\linewidth]{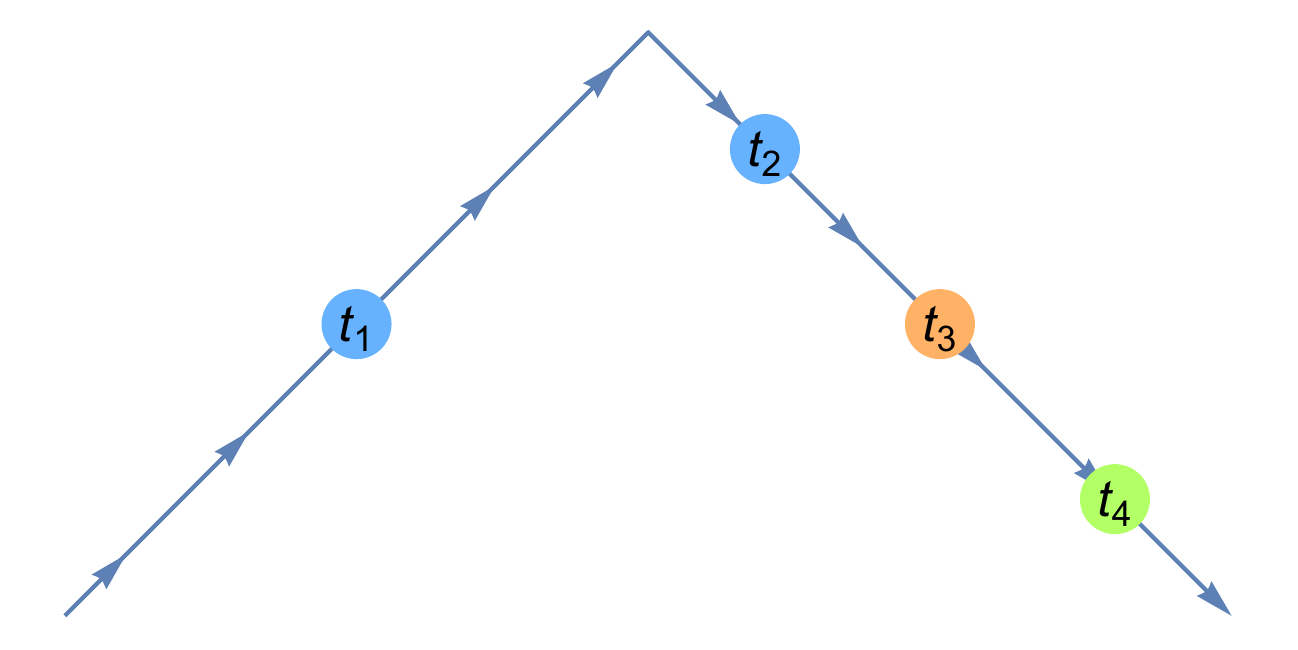}
    \caption{It is not possible to find $t_1\leq \ldots \leq t_4$ with $X'(t_1) \not= X'(t_2)$ and $X'(t_3) \not= X'(t_4)$ on a two-segment path.}\label{fig:vol-van}
\end{figure}

This argument can be extended from concatenations of volumes to \textit{interlacings of volumes}.
While any concatenation of signed volumes of the form
\begin{equation*}
    \vol(i_1i_2i_3)\otimes\vol(i_4i_5i_6)
\end{equation*}
vanishes under the signature when we fail to find segments $v'_1,v_2',v_3',v'_4,v_5'$ in the correct order from $(v_1,\dots,v_m)$ such that $v'_1, v_2',v_3'$ and $v'_3,v'_4,v_5'$ are linearly independent,
any interlacing of the volumes $\vol(i_1i_2i_3i_4)$ and $\vol(i_1i_2i_3)$ of the form

\begin{equation*}
    \sum_{\tau\in S_4}\mathrm{sgn}(\tau)\,e_{i_{\tau(1)}i_{\tau(2)}}\otimes\vol(i_5i_6i_7)\otimes e_{i_{\tau(3)}i_{\tau(4)}}
\end{equation*}
vanishes under the signature when we fail to find segments $v'_1,v_2',v_3',v'_4,v_5'$ in the correct order from $(v_1,\dots,v_m)$ such that $v'_1,v_2',v_4',v_5'$ and $v'_2,v'_3,v_4'$ are linearly independent.
Both expressions vanish when we have less than five segments altogether.
It might seem surprising at first that interlacings of volumes do not yield more relations on $\spann\sigma(\paths^d_{\leq m})$ than linear combinations of concatenations of volumes.
However, this is another simple consequence of the main result of this article.

The reader is advised to not lose themselves in the technical details of the following definition, we will not need it for the last section.
It is just meant to introduce interlacings of volumes generally and rigorously to make Corollary \ref{cor:interlacings} a precise and satisfying statement.
\begin{definition}
For $k, m, d\in \mathbb N$, let $\mathcal S_{d,m,k}$ denote the set of all set partitions $\mathscr I$ of $[k]$ with block size at most $d$ such that there is no non-decreasing map $f: [k] \to [m]$ with $f|_I$ injective for all $I \in \mathscr I$.

        For a partition $\mathscr I = \{I_1, \ldots, I_l\} \in \mathcal S_{d,m,k}$, write $S_{\mathscr I} := S_{I_1} \times \ldots \times S_{I_l}$.
We call a word $w = i_1 \cdots i_k$ of length $k$ in letters $1,\ldots,d$ an \textit{$\mathscr I$-essential} word if for all $I \in \mathscr I$, $I=\{j_1 < \cdots < j_r\}$, the induced word $w|_{I}:=i_{j_1}\cdots{i}_{j_r}$ contains no letter more than once.
$S_I$ acts by permutation of letters at indices $i\in I$ on words of length $k$.
\end{definition}
Let us then denote the partial antisymmetrization
$$\sum_{\tau \in S_{\!\mathscr I}} \mathrm{sgn}(\tau) \, e_{\tau(w)}$$ by $\mathscr I(w)$.
Then, once again, we must have 
\begin{equation*}
    \langle\sigma(X),\mathscr{I}(w)\rangle=0
\end{equation*}
    for $\mathscr{I}\in\mathcal S_{d,m,k}$, an $\mathscr{I}$-essential word $w$ of length $k$ and $X$ a piecewise linear path through $\mathbb R^d$ with $m$ segments.
    Thus, by Corollary \ref{cor:AnnSig} and Theorem \ref{thm:conc_vol}, we conclude the following.
\begin{corollary}\label{cor:interlacings}
        Let $\mathscr{I}\in\mathcal{S}_{d,m,k}$ and let $w$ be an $\mathscr{I}$-essential word of length $k$. Then
        \begin{equation*}
            \mathscr{I}(w)\in \proj_k\,\mathcal{I}(\paths^d_{\leq m}) = \Pwa_{k-m}^{k}(\mathbb R^d).
        \end{equation*}
        In other words, any such interlacing of volumes $\mathscr{I}(w)$ can be written as a linear combination of concatenations of volumes of the form
        \begin{equation*}
            \vol(w_1)\otimes\cdots\otimes \vol(w_\ell)
        \end{equation*}
        with $\ell\leq k-m$.
    \end{corollary}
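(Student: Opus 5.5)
The corollary reduces almost entirely to \Cref{cor:AnnSig}. Once we know $\langle\sigma(X),\mathscr I(w)\rangle=0$ for every $X\in\paths^d_{\leq m}$, we get $\mathscr I(w)\in\proj_k\mathcal I(\paths^d_{\leq m})$. This space equals $\Pwa^k_{k-m}(\mathbb R^d)$ by \Cref{cor:AnnSig}, and \Cref{thm:conc_vol} expresses it as a span of concatenations of volumes. So the plan is: first prove the vanishing rigorously, then invoke those two results.

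\emph{Step 1: vanishing.} Fix $X$ with segments $v_1,\dots,v_{m'}$, $m'\leq m$, on time intervals $(t_{j-1},t_j)$. By \Cref{def:sigintegrand}, $\langle\sigma(X),\mathscr I(w)\rangle$ is the integral over $\Delta^k_T$ of $\sigintegrand{X}{\mathscr I(w)}$. Write $\mathscr I=\{I_1,\dots,I_l\}$. Expanding the signed sum over $S_{\mathscr I}=\prod S_{I_r}$ factorwise, the integrand at $(s_1\leq\dots\leq s_k)$ becomes the product over $r$ of ${\det}_{w|_{I_r}}\bigl(X'(s_j)\bigr)_{j\in I_r}$. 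Here we use that $\tau\in S_{I_r}$ permutes the letters at positions in $I_r$, so the multilinear expansion gives exactly a determinant in the columns $X'(s_j)$, $j\in I_r$. The letters of $w|_{I_r}$ are distinct because $w$ is $\mathscr I$-essential, though vanishing does not even need this.

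Away from a null set, each $s_j$ lies in the interior of some segment interval. This defines a map $f:[k]\to[m']\subseteq[m]$ with $X'(s_j)=v_{f(j)}$, and $f$ is non-decreasing since $s_1\leq\dots\leq s_k$. Because $\mathscr I\in\mathcal S_{d,m,k}$, some block $I_r$ has $f|_{I_r}$ non-injective. That determinant then has two equal columns and vanishes, so the integrand vanishes almost everywhere and the pairing is $0$.

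\emph{Step 2: conclusion.} Since $X\in\paths^d_{\leq m}$ was arbitrary, $\mathscr I(w)$ lies in the level $k$ part of $\mathcal I(\paths^d_{\leq m})$, which is $\Pwa^k_{k-m}(\mathbb R^d)$ by \Cref{cor:AnnSig}. \Cref{thm:conc_vol} then writes it as a linear combination of $\vol(w_1)\otimes\cdots\otimes\vol(w_\ell)$ with $k-\ell+1>m$, i.e.\ $\ell\leq k-m$.

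The main obstacle is the bookkeeping in Step 1: checking that the partial antisymmetrization really factors into the product of block determinants, with columns indexed by the positions in $I_r$ and rows by the letters of $w|_{I_r}$. This requires matching the action convention $e_{\tau(w)}$ and the sign carefully, although any sign slip does not affect the vanishing. Boundary times, where $X'$ is undefined, are covered by the zero convention in \Cref{def:sigintegrand} and have measure zero.
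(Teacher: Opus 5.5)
Your proposal is correct and follows the paper's route. The paper also obtains $\langle\sigma(X),\mathscr I(w)\rangle=0$ for all $X\in\paths^d_{\leq m}$ by the same vanishing-of-integrands reasoning it used for concatenations of volumes, and then concludes with \Cref{cor:AnnSig} and \Cref{thm:conc_vol}; your Step~1 spells out what the paper leaves implicit (the block-determinant factorization and the non-decreasing map $f$), with one harmless caveat: under a general parametrization, $X'(s_j)$ is a positive multiple of $v_{f(j)}$ rather than $v_{f(j)}$ itself, which still gives equal columns whenever $f$ is non-injective on a block.
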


\section{Ideal structures on piecewise alternating tensors}\label{sec:ideals}

Given the dimension of the ambient space of the path $d$ and the number of path segments $m$, so far we have treated the $\Pwa_{k-m}^{k}(\mathbb R^d)=\ann\sigma^{(k)}(\paths^d_{\leq m})$ independent of each other for each $k$ individually.
However, it is for example immediately clear from Theorem \ref{thm:conc_vol} that if we found a relation $x\in\Pwa_{k-m}^{k}(\mathbb R^d)$, then $x\otimes e_{i}$ and $e_{i}\otimes x$ are relations in $\Pwa_{k+1-m}^{k+1}(\mathbb R^d)$ for any letter $i=1,\dots,d$.
To investigate this further, let us look more specifically again at the full graded vector space of relations
\begin{equation*}
    \mathcal{I}(\paths^d_{\leq m})=\bigoplus_{k=0}^\infty\Pwa_{k-m}^{k}(\mathbb R^d).
\end{equation*}
Then the stability with respect to concatenation from the left or from the right that we just observed means precisely that $\mathcal{I}(\paths^d_{\leq m})$ is a \textit{two-sided ideal} (see e.g. \cite{brevsar2014introduction} or \cite{li2002noncommutative}) with respect to the concatenation product $\otimes$ that makes $T(\mathbb R^d)$ the free associative algebra.

However, to understand the structure of the ideal even better, and in particular also in a more geometrical manner, we need to introduce the shuffle and halfshuffle products, which go back to \cite{EM53}, \cite{Ree58} and \cite{S58}. For this purpose, let $T^{\geq 1}(\mathbb{R}^d):=\bigoplus_{k=1}^\infty (\mathbb R^d)^{\otimes k}$ denote the non-unital part of the tensor algebra.

\begin{definition}
The bilinear right $\succ$ and left $\prec$ \textit{halfshuffles} $\succ,\prec:T^{\geq 1}(\mathbb{R}^d)\times T^{\geq 1}(\mathbb{R}^d)\to T^{\geq 1}(\mathbb{R}^d)$ are recursively given by
 \begin{align*}
  e_{w}\succ e_i &:= e_{wi},&e_i\prec e_w &:= e_{iw}\\
  e_{w_1}\succ e_{w_2i} &:= (e_{w_1}\succ e_{w_2}+e_{w_2}\succ e_{w_1})\otimes e_i,\quad& e_{iw_2} \prec e_{w_1}&:= e_i\otimes(e_{w_1}\prec e_{w_2}+e_{w_2}\prec e_{w_1})
 \end{align*}
 for arbitrary non-empty words $w,w_1,w_2$ and arbitrary letters $i$.
    Finally, the \textit{shuffle product} is defined as the symmetrization of either halfshuffle,
    \begin{equation*}
        x\shuffle y:=x\succ y+y\succ x=x\prec y+y\prec x
    \end{equation*}
    and extended to a bilinear product $\shuffle:\,T(\mathbb R^d)\times T(\mathbb R^d)\to T(\mathbb R^d)$ via
    \begin{equation*}
        \emptyword \shuffle x:=x\shuffle \emptyword:= x.
    \end{equation*}
\end{definition}\autonom{\(\succ,\, \prec,\, \shuffle\)}{The right and left halfshuffle and the shuffle product}

Note that while the shuffle product is associative and commutative, 
the halfshuffles are non-commutative and non-associative. 
It is a classical result (see e.g. \cite{reutenauer93}, generally a good introduction to shuffle products) 
that $(T(\mathbb R^d),\shuffle)$ is a free commutative algebra, 
and thus can be thought of as a polynomial ring in infinitely many variables. 
$(T^{\geq 1}(\mathbb R^d),\succ)$ and $(T^{\geq 1}(\mathbb R^d),\prec)$ by contrast are the free left and right \textit{Zinbiel algebra}, respectively (see \cite[page~19]{S58} and \cite[Proposition~1.8]{L95}), in finitely many generators $e_1,\dots,e_d$.
As in \cite{Pre24}, we warn the reader not to confuse the combination of $\succ$ and $\prec$ with a noncommutative dendriform algebra.

With this terminology, we observe the following. 
\begin{proposition}
    $\mathcal{I}(\paths^d_{\leq m})$ is an ideal with respect to the right and left halfshuffle product, and in particular with respect to the shuffle product.
    That means $\mathcal{I}(\paths^d_{\leq m})\subseteq T^{\geq 1}(\mathbb R^d)$ and
\begin{align*}
        x\in T^{\geq 1}(\mathbb R^d),\, y\in \mathcal{I}(\paths^d_{\leq m}) &\Rightarrow x\succ y,\, y\succ x\in\mathcal{I}(\paths^d_{\leq m}),\\
        x\in T^{\geq 1}(\mathbb R^d),\, y\in \mathcal{I}(\paths^d_{\leq m}) &\Rightarrow x\prec y,\, y\prec x\in\mathcal{I}(\paths^d_{\leq m}),\\
        x\in T(\mathbb R^d),\, y\in \mathcal{I}(\paths^d_{\leq m}) &\Rightarrow x\shuffle y\in\mathcal{I}(\paths^d_{\leq m}).
    \end{align*}
\end{proposition}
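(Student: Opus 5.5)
The plan is to work directly with the vanishing-ideal characterization, $\mathcal{I}(\paths^d_{\leq m})=\{y\in T(\mathbb R^d)\,|\,\langle\sigma(X),y\rangle=0\ \forall X\in\paths^d_{\leq m}\}$, and to use the iterated-integral structure of the signature under halfshuffles. First, $\mathcal{I}(\paths^d_{\leq m})\subseteq T^{\geq 1}(\mathbb R^d)$ holds because the level-$0$ component of $\sigma(X)$ is $1$, so a relation cannot have a nonzero constant term. (Equivalently, $\Pwa^0_{-m}=\{0\}$ by the convention in \Cref{cor:AnnSig}.) The shuffle statement is then immediate from Ree's identity, $\langle\sigma(X),x\shuffle y\rangle=\langle\sigma(X),x\rangle\langle\sigma(X),y\rangle=0$. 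It is also implied by the halfshuffle statements, since $x\shuffle y=x\succ y+y\succ x$ for $x\in T^{\geq1}$ and $\emptyword\shuffle y=y$.

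For the halfshuffles, the key tool is the prefix-path version of the halfshuffle identity. Write $X|_{[0,t]}$ for the restriction of $X$ to $[0,t]$. Since $X|_{[0,t]}$ is again a piecewise linear path with at most $m$ segments, every $y\in\mathcal I(\paths^d_{\leq m})$ satisfies $\langle\sigma(X|_{[0,t]}),y\rangle=0$ for all $t$. I will then establish, by induction on word length using the recursive definition, the identity
\[
\langle\sigma(X),x\succ y\rangle=\int_0^T\langle\sigma(X|_{[0,t]}),x\rangle\,\mathrm d\langle\sigma(X|_{[0,t]}),y\rangle .
\]
This comes from checking that both sides satisfy the same recursion for $y=e_{w_2 i}$, namely $\mathrm d\langle\sigma(X|_{[0,t]}),e_{wi}\rangle=\langle\sigma(X|_{[0,t]}),e_w\rangle X_i'(t)\,\mathrm dt$ together with the product rule.

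From this identity, if $y\in\mathcal I$, the integrator vanishes identically, so $x\succ y\in\mathcal I$. If instead $x$ is the relation, it is the integrand $\langle\sigma(X|_{[0,t]}),x\rangle$ that vanishes identically, so $y\succ x\in\mathcal I$ as well. Note that in the role of $x$ in $x\succ y$ I also need the case where the first argument is the relation; that is exactly this integrand case.

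The left halfshuffle is handled symmetrically with suffix paths $X|_{[t,T]}$ and the identity
\[
\langle\sigma(X),x\prec y\rangle=-\int_0^T\langle\sigma(X|_{[t,T]}),y\rangle\,\mathrm d\langle\sigma(X|_{[t,T]}),x\rangle .
\]
An alternative is to apply the path-reversal/word-reversal anti-automorphism, which maps $\mathcal I$ to itself (the reversal of an $m$-segment path is again one) and interchanges $\succ$ and $\prec$ up to sign conventions.

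The main obstacle is carefully proving the halfshuffle integral identity, including the left version or the reversal symmetry with correct signs. Everything else is bookkeeping. A purely algebraic alternative I would keep in reserve is to use \Cref{thm:conc_vol}: show that $x\succ(\text{generator})$ lies in $\Pwa^{k}_{k-m}$ by noting that halfshuffling with $e_v$ produces interlacings that keep each volume block antisymmetric. However, this requires the interlacing argument of \Cref{cor:interlacings} and seems messier, so the analytic route is preferred.
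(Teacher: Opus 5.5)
Your proposal is correct and follows the paper's route. The paper gets $\mathcal{I}(\paths^d_{\leq m})\subseteq T^{\geq 1}(\mathbb R^d)$ from gradedness (\Cref{cor:AnnSig}) together with $\langle\sigma(X),e_\emptyword\rangle=1$. It then gets the halfshuffle statements by citing \cite[Corollary~3.5]{Pre24}, using that $\paths^d_{\leq m}$ is closed under restriction to subintervals, which is exactly the prefix/suffix halfshuffle-integral argument you spell out; your signs in both identities agree with the paper's recursive definitions. The one point to make explicit is that ``a relation cannot have a nonzero constant term'' needs either the gradedness you mention parenthetically, or evaluation at a path with all segments zero, whose signature is $e_\emptyword$.
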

\begin{proof}
    $\mathcal{I}(\paths^d_{\leq m})$ is a graded subspace of $T(\mathbb R^d)$ by Corollary \ref{cor:AnnSig} and for the empty word, we have $e_\emptyword\notin \mathcal{I}(\paths^d_{\leq m})$ since $\langle\sigma(X),e_\emptyword\rangle=1$ for all paths $X$, thus $\mathcal{I}(\paths^d_{\leq m})\subseteq T^{\geq 1}(\mathbb R^d)$.
    As the family of piecewise linear paths up to a certain number of segments has the crucial property that it is stable under arbitrary restriction to subintervals of the time domain, the statement of the proposition is a direct application of \cite[Corollary~3.5]{Pre24}.
\end{proof}

In the case of two-segment paths, by our computations, the structure seems to be particularly simple.
\begin{conjecture}
    $\mathcal{I}(\paths^d_{\leq 2})$ as an ideal with respect to left and right halfshuffle products is generated in levels $k=3,4$.
\end{conjecture}

Note that we know that only $k=3$ or only $k=4$ is not enough here for $d\geq 3$, as we have $\vol(123)\in\Pwa^3_{3-2}$, yet e.g.\ $\vol(12)\tensor\vol(12)\in \Pwa^4_{4-2}$ cannot be obtained from $\vol(123)$ by combinations of halfshuffle operations.

Unfortunately, we were not yet able to even make a good guess about whether all $\mathcal{I}(\paths^d_{\leq m})$ are finitely generated as ideals with respect to the left and right halfshuffle products. However, as we will see now, $\mathcal{I}(\paths^d_{\leq m})$ admits an even stronger ideal structure for which we can obtain a finite generation result.

\begin{definition}
    We call a graded linear subspace $I\subseteq T(\mathbb R^d)$ a \textit{letter-insertion ideal} if for any $k\in\mathbb N$, any homogeneous $x \in (\mathbb R^d)^k$, all $s\in \{0,\dots,k\}$ and all $j\in\{1,\dots,d\}$ we have
    \begin{align*}
      x\in I \Rightarrow \insertletter(x,s,j)\in I,
    \end{align*}
    where
    \begin{equation*}
        \insertletter\Big(\sum_{i_1,\dots,i_k=1}^d a_{i_1\cdots i_k}e_{i_1\cdots i_k}, s,\,j\Big):=\sum_{i_1,\dots,i_k=1}^d a_{i_1\cdots i_k}e_{i_1\cdots i_{s}ji_{s+1}\cdots i_k}
    \end{equation*}
    defines the insertion of the letter $j$ directly after the $s$-th letter (cf.\ \cite[Section~3.3]{DR18}).
\end{definition}

This means that given a linear combination of words of the same length, we are allowed to insert a letter \text{at the exact same position} into each of the words.

Any letter-insertion ideal is of course a two-sided ideal with respect to $\otimes$,
since we are allowed to choose all $x_j$ or all $y_j$ to be the empty word. Furthermore, any letter-insertion ideal satisfies the \textit{insertion of factors property (IFP)} (terminology from e.g.\ \cite{hashemi2008} and going back to \cite{bell1970}) given by
\begin{equation*}
      x\otimes  y\in I, \,z\in T(\mathbb R^d)\Rightarrow x\otimes z\otimes y\in I.
\end{equation*}

Finally, any letter-insertion ideal is an ideal with respect to left and right halfshuffle products, and in particular with respect to the shuffle product.
This follows immediately from the recursive definitions.

Now, recall the notation $\sigintegrand{X}{z}$ introduced in \Cref{def:sigintegrand}.
An important way in which \mbox{letter-insertion} ideals arise is then the following.

\begin{proposition}\label{prop:vanishing-integrands-shconc-ideal}
    Let $P\subseteq \paths^d$ and $I\subseteq T(\mathbb R^d)$ be the graded linear subspace spanned by all homogeneous $z\in (\mathbb R^d)^{\otimes k}, k\in\mathbb N_0$ such that we have vanishing of integrands
    \begin{equation*}
        \sigintegrand{X}{z}(t_1,\dots,t_k)=0
    \end{equation*}
    for almost all $(t_1,\dots,t_k)\in\Delta_T^k$ and all $X\in P$. 
    Then $I$ is a letter-insertion ideal.
\end{proposition}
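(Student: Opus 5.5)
Let $J_k \subseteq (\mathbb R^d)^{\otimes k}$ be the set of all homogeneous $z$ of degree $k$ whose integrand $\sigintegrand{X}{z}$ vanishes almost everywhere on $\Delta_T^k$ for every $X\in P$. The plan is to show that $J_k$ is already a linear subspace and that $\insertletter(\cdot,s,j)$ maps $J_k$ into $J_{k+1}$. Then $I=\bigoplus_k J_k$, and since $\insertletter$ is linear and a letter-insertion ideal is defined degreewise on homogeneous elements, the claim follows.

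Linearity is immediate. By \Cref{def:sigintegrand}, $z\mapsto \sigintegrand{X}{z}(t_1,\dots,t_k)$ is linear for each fixed $X$ and $(t_1,\dots,t_k)$. A finite union of null sets is null, so linear combinations of elements of $J_k$ stay in $J_k$.

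The key step is the factorization identity: for $z=\sum a_{i_1\cdots i_k}e_{i_1\cdots i_k}$,
\[
\sigintegrand{X}{\insertletter(z,s,j)}(t_1,\dots,t_{k+1}) = X_j'(t_{s+1})\,\sigintegrand{X}{z}(t_1,\dots,t_s,t_{s+2},\dots,t_{k+1}),
\]
valid wherever all derivatives exist. This holds simply by pulling the common factor $X_j'(t_{s+1})$ out of every summand. Where some derivative fails to exist, both sides are zero by convention.

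It remains to check that the right-hand side vanishes almost everywhere on $\Delta_T^{k+1}$. Let $N\subseteq\Delta_T^k$ be the null set where $\sigintegrand{X}{z}\neq 0$. Consider the projection $\pi:\Delta_T^{k+1}\to\Delta_T^k$ that forgets $t_{s+1}$. The nonvanishing locus of the new integrand is contained in $\pi^{-1}(N)$. This preimage is contained in $N\times[0,T]$ after reordering coordinates, and by Fubini it has Lebesgue measure zero in $\mathbb R^{k+1}$. This is the only point needing care, and I expect it to be the main (mild) obstacle: one must note that $\pi$ is the restriction of a coordinate projection, so preimages of null sets are null. Since $X$ is piecewise linear, the integrands are measurable, indeed piecewise constant, so there are no measurability issues. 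This holds for every $X\in P$, hence $\insertletter(z,s,j)\in J_{k+1}$, and $I$ is a letter-insertion ideal.
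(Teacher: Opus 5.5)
Your proposal is correct and takes the same route as the paper: you factor $X_j'(t_{s+1})$ out of the integrand of $\insertletter(z,s,j)$ and conclude that this integrand vanishes almost everywhere on $\Delta_T^{k+1}$. Two points the paper's proof leaves implicit (``we then obviously have'') are spelled out in your version: that the set of such $z$ in each degree is already a linear subspace, and that preimages of null sets under the coordinate-forgetting projection are null by Fubini.
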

\begin{proof}
   Let $k\in \mathbb N_0$ and $z=\sum_{i_1,\dots,i_k=1}^d a_{i_1\cdots i_k}e_{i_1\cdots i_k}\in(\mathbb R^d)^{\otimes k}\cap I$ arbitrary. Since 
    \begin{equation*}
        0=\sigintegrand{X}{z}(t_1,\dots,t_k)=\sum_{i_1,\dots,i_k=1}^da_{i_1\cdots i_k} X_{i_1}'(t_1)\cdots X_{i_k}'(t_k)
    \end{equation*}
    for almost all $(t_1,\dots,t_k)\in\Delta_T^k$,
    we then obviously have

    \begin{align*}
        &\sigintegrand{X}{\insertletter(z,s,j)}(t_1,\dots,t_k)\\
        &=\sum_{i_1,\dots,i_k=1}^da_{i_1\cdots i_k} X_{i_1}'(t_1)\cdots X_{i_s}'(t_s)X_{j}'(t_{s+1})X_{i_{s+1}}'(t_{s+2})\cdots X_{i_k}'(t_{k+1})\\
        &=X_{j}'(t_{s+1})\sum_{i_1,\dots,i_k=1}^da_{i_1\cdots i_k} X_{i_1}'(t_1)\cdots X_{i_s}'(t_s)X_{i_{s+1}}'(t_{s+2})\cdots X_{i_k}'(t_{k+1})=0
    \end{align*}
    for all $s\in\{0,\dots,k\}$, $j\in\{1,\dots,d\}$ and almost all $(t_1,\dots,t_{k+1})\in\Delta_T^{k+1}$.
    Thus, any result of a letter insertion into $z$ is again in $I$.
\end{proof}

Finally, with the following result, a relatively simple description of the structure of $\mathcal{I}(\paths^{d}_{\leq m})$ is possible.
\begin{proposition}
    $\mathcal{I}(\paths^d_{\leq m})$ is a finitely generated letter-insertion ideal.
    More precisely, for $d\geq 2$, it is generated in levels $k_0^{d,m},\ldots,2m$, where $k_0^{d,m}=\lceil\tfrac{dm}{d-1}\rceil$.
 \end{proposition}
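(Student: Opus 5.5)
The plan has three parts: show that $\mathcal{I}(\paths^d_{\leq m})$ is a letter-insertion ideal, locate its lowest nonzero level, and show that every level $k>2m$ is produced by letter insertions from level $k-1$.

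\emph{Letter-insertion ideal.} By \Cref{thm:conc_vol}, each $\Pwa^{k}_{k-m}(\mathbb R^d)$ is spanned by the concatenations $\vol(w_1)\otimes\cdots\otimes\vol(w_\ell)$ with $k-\ell+1>m$. The geometric argument following \Cref{thm:conc_vol} shows more than vanishing of $\langle\sigma(X),\cdot\rangle$. It shows that the integrand $\sigintegrand{X}{\vol(w_1)\otimes\cdots\otimes\vol(w_\ell)}$, a product of determinants, vanishes at every point of $\Delta_T^k$ where the derivatives exist, for every $X\in\paths^d_{\leq m}$. Let $I$ be the space from \Cref{prop:vanishing-integrands-shconc-ideal} with $P=\paths^d_{\leq m}$. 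Then $\mathcal I(\paths^d_{\leq m})\subseteq I$. Conversely, vanishing of the integrand a.e.\ implies vanishing of the integral, so $I\subseteq\mathcal I(\paths^d_{\leq m})$. Hence the two spaces are equal, and \Cref{prop:vanishing-integrands-shconc-ideal} shows that $\mathcal I(\paths^d_{\leq m})$ is a letter-insertion ideal.

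\emph{Lowest level.} By \Cref{cor:dim-of-pws}, $\Pwa^k_{k-m}(\mathbb R^d)\neq 0$ exactly when some word of length $k$ in letters $1,\dots,d$ has at least $m$ descents. By the proof of \Cref{cor:space-spanning-bound}, the maximal number of descents is $\lfloor\frac{d-1}{d}k\rfloor$. The condition $\lfloor\frac{d-1}{d}k\rfloor\geq m$ is equivalent to $\frac{d-1}{d}k\ge m$, since $m$ is an integer. That is, $k\geq \frac{dm}{d-1}$, i.e.\ $k\ge k_0^{d,m}$. So no generators are needed below $k_0^{d,m}$, and the level $k_0^{d,m}$ itself is nonzero and must be included.

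\emph{No new generators above $2m$.} Fix $k>2m$ and a spanning element $z=\vol(w_1)\otimes\cdots\otimes\vol(w_\ell)$ with $\ell\le k-m$. We show $z$ is a sum of letter insertions applied to elements of $\Pwa^{k-1}_{k-1-m}(\mathbb R^d)$.
\begin{itemize}
\item If some block has $|w_i|=1$, then $z$ is the insertion of that letter into the concatenation with this block deleted. The deleted concatenation has level $k-1$ and $\ell-1$ blocks, and $(k-1)-(\ell-1)+1=k-\ell+1>m$, so it lies in $\Pwa^{k-1}_{k-1-m}(\mathbb R^d)$.
\item If all blocks have size $\geq 2$, then $\ell\le k/2$. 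Since $k>2m$ gives $k/2<k-m$, we get $\ell<k-m$, i.e.\ $k-\ell>m$. Pick a block $w_i=j_1\cdots j_r$ with $r\geq 2$ and write $w_i'=j_2\cdots j_r$. Grouping the terms of the antisymmetrization by the position of $j_1$ gives
\[
\vol(w_i)=\sum_{p=1}^{r}(-1)^{p-1}\,\insertletter(\vol(w_i'),p-1,j_1).
\]
Hence $z$ is a signed sum of insertions of $j_1$, at shifted positions, into $\vol(w_1)\otimes\cdots\otimes\vol(w_i')\otimes\cdots\otimes\vol(w_\ell)$. This element has level $k-1$ and $\ell$ blocks, and it lies in $\Pwa^{k-1}_{k-1-m}(\mathbb R^d)$ because $(k-1)-\ell+1=k-\ell>m$.
\end{itemize}
The two cases meet the condition of \Cref{thm:conc_vol} in different ways. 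The first always works. The second needs the margin $k-\ell>m$, which fails only when $k=m+\ell\le m+k/2$, i.e.\ $k\le 2m$. By induction on $k$, every level above $2m$ is generated by the levels below it, so generators in levels $k_0^{d,m},\dots,2m$ suffice.

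The main obstacle is the first step, namely that vanishing of the pairing upgrades to vanishing of the integrand. It relies on \Cref{thm:conc_vol} supplying generators whose integrands vanish pointwise, not merely after integration.
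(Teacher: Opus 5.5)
Your proposal is correct and follows the paper's proof. It uses the same three steps: letter-insertion via \Cref{prop:vanishing-integrands-shconc-ideal} and the pointwise vanishing of the determinant integrands, the lowest level via \Cref{cor:space-spanning-bound}, and, for $k>2m$, either deleting a one-letter block or peeling one letter off a longer block, which works because $k-\ell>m$ there. The differences are cosmetic: you make the identification $\mathcal I(\paths^d_{\leq m})=I$ explicit where the paper leaves it implicit, and you expand a block by the position of its first letter, whereas the paper expands the last block by its last letter, $\vol(w_\ell)=\sum_i s_i\vol(w_i')\otimes e_i$, so that case needs only right concatenation.
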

\begin{proof}
    For $d=1$, we always have $\mathcal{I}(\paths^1_{\leq m})=\{0\}$, in particular it is a letter-insertion ideal and finitely generated.
    So assume $d\geq 2$ for the rest of the proof.
    That $\mathcal{I}(\paths^d_{\leq m})$ is a letter-insertion ideal then follows immediately from Proposition \ref{prop:vanishing-integrands-shconc-ideal} and the discussion about concatenation of volumes in Section \ref{sec:signatures}.
    The lowest degree $k_0^{d,m}$ is by Corollary \ref{cor:space-spanning-bound} the smallest integer $k$ such that $\lfloor\tfrac{d-1}{d}k\rfloor\geq m$, which is $\lceil\tfrac{dm}{d-1}\rceil$.
    
    We are left to show that we do not need generators from levels higher than $2m$. 
    Thus, let $k>2m$, and
    \begin{equation*}
     \vol(w_1)\otimes \cdots\otimes \vol(w_\ell)\in\Pwa_{k-m}^{k}(\mathbb R^d)
     \end{equation*}
     with $\ell\leq k-m$ arbitrary.
     
     If $\ell=k-m$, there is at least one $w_j$ which is a letter, since $2\ell=2k-2m>2k-k=k$. Thus $\vol(w_1)\otimes \cdots\otimes \vol(w_\ell)$ is induced by 
     \begin{equation*}
    \vol(w_1)\otimes \cdots\otimes\vol(w_{j-1})\otimes \vol(w_{j+1})\otimes\cdots\otimes \vol(w_\ell)\in\Pwa_{k-1-m}^{k-1}(\mathbb R^d).
     \end{equation*}

     If otherwise $\ell<k-m$, then either $w_\ell$ is a letter, allowing us to argue like in the previous step, or 
     \begin{equation*}
     \vol(w_\ell)=\sum_i s_i\vol(w_i')\otimes e_{i},
     \end{equation*}
     where the sum runs over the letters appearing in $w_\ell$, and  $w_i'$ and $s_i$ are such that there is $\tau\in S_{|w_\ell|}$ with $\operatorname{sgn} \tau=s_i$ and $\tau(w_\ell)=w_i'i$ (cf.\ \cite[Lemma~3.17]{DR18}). Thus, $\vol(w_1)\otimes \cdots\otimes \vol(w_\ell)$ is induced by the corresponding
     \begin{equation*}
         \vol(w_1)\otimes\cdots\otimes\vol(w_{\ell-1})\otimes\vol(w_i')\in \Pwa_{k-1-m}^{k-1}(\mathbb R^d).
     \end{equation*}
\end{proof}

\printnomenclature[4cm]

\bibliographystyle{alpha}
\bibliography{main}
\end{document}